\DeclareMathAlphabet{\mathfrak}{U}{euf}{m}{n}
\setlist[itemize]{leftmargin=*}
\newcommand{\genu}{\alpha}
\newcommand{\genw}{u}
\newcommand{\genc}{\gamma}
\newcommand{\gene}{c}
\newcommand{\geneidem}{e}
\newcommand{\gend}{\beta}
\let\oldproofname=\proofname
\renewcommand{\proofname}{\bfseries\textup{\oldproofname}}
\DeclareMathOperator{\Z}{\mathbb{Z}}
\DeclareMathOperator{\C}{\mathbb{C}}
\DeclareMathOperator{\Q}{\mathbb{Q}}
\DeclareMathOperator{\F}{\mathbb{F}}
\DeclareMathOperator{\R}{\mathbb{R}}
\DeclareMathOperator{\Tr}{Tr}
\DeclareMathOperator{\Res}{Res}
\DeclareMathOperator{\dom}{dom}
\DeclareMathOperator{\Hom}{Hom}
\DeclareMathOperator{\limit}{\lim}
\DeclareMathOperator{\colimit}{\textup{colim}}
\theoremstyle{plain}
\newtheorem{prop}{Proposition}[section]
\newtheorem{cor}[prop]{Corollary}
\title{$C_2$ equivariant characteristic classes over the rational Burnside ring}
\author{Nick Georgakopoulos}
\begin{document}

	\begin{abstract}We give minimal presentations for the $RO(C_2)$-graded Bredon cohomology of the equivariant classifying spaces $B_{C_2}U(n), B_{C_2}SO(n)$ and $B_{C_2}Sp(n)$ with coefficients in the rational Burnside Green functor $A_{\Q}$. This results in an efficient description of rational $C_2$ equivariant Chern, Pontryagin and symplectic characteristic classes. These classes are then related to each other using the inclusions of maximal tori.
	\end{abstract}

	\maketitle{}
	\tableofcontents
	\section{Introduction}\label{Intro}
	
	Characteristic classes are classical and invaluable tools for understanding and distinguishing bundles over spaces.
	If we have a compact Lie group $G$ acting on a space $X$, there is a corresponding theory of $G$-equivariant bundles and $G$-equivariant characteristic classes.

	May proves in \cite{May87} that when Borel cohomology $$H^*_{G,Borel}(X)=H^*(X\times_GEG)$$ is used, the theory of Borel equivariant characteristic classes reduces to the non\-equivariant one, in the sense that $H^*_{G,Borel}(B_GL)=H^*(BG)\otimes H^*(BL)$ for any compact Lie group $L$ (which can be $L=U(n), SO(n), Sp(n)$ and so on).
	
	Equivariant characteristic classes in genuine (Bredon) equivariant cohomology are much less understood, owing to the significant complexity involved in computing it. 
	
	Recall that for a $G$-space $X$, unreduced $G$-equivariant (Bredon) cohomology $H^{\bigstar}_G(X)$ is not just a ring, but a Green functor: for every orbit $G/H$ we have a ring $H^{\bigstar}_G(X)(G/H)$ with an action from the Weyl group $W_GH=N_GH/H$ (where $N_GH$ is the normalizer of $H$ in $G$) and these rings are related to each other via restriction and transfer maps satisfying certain axioms. In more detail, for any subgroup inclusion $K\subseteq H$ we have a corresponding restriction and transfer maps: \begin{gather}
	\Res^H_K:H^{\bigstar}_G(X)(G/H)\to H^{\bigstar}_G(X)(G/K)\\
	\Tr^H_K:H^{\bigstar}_G(X)(G/K)\to H^{\bigstar}_G(X)(G/H)
	\end{gather}
	Moreover, the index $\bigstar$ is not just an integer, but an element of the real representation ring $RO(G)$. The coefficients used in $RO(G)$-graded cohomology are also Green functors and the initial ring $\Z$ is supplanted by the initial Burnside Green functor $A_{\Z}$. So $H^{\bigstar}_G(X)$ is by definition $H^{\bigstar}_G(X;A_{\Z})$ and we can more generally consider $H^{\bigstar}_G(X;R)$ for a $G$-Green functor $R$.
	
	Computing the coefficients of $RO(G)$-graded cohomology, namely the Green functor $H^{\bigstar}_G(*;A_{\Z})$, is a non-trivial undertaking on its own. The reader can consult \cite{Lew88} for the rather complicated answer when $G=C_p$ is the  cyclic group of prime order $p$. Even when we replace the coefficients $A_{\Z}$ by the constant Green functors corresponding to trivial $G$-modules $\Z$ and $\F_2$, the computations remain quite involved (see \cite{Geo19} and \cite{BC4S2} for the case of $G=C_4$).
	
	For characteristic classes, we further need to compute the $RO(G)$-graded cohomology of equivariant classifying spaces such as $B_GU(n), B_GSO(n)$ and $B_GSp(n)$. Such calculations for $n\le 3$, $G=C_2$ and using $A_{\Z}$ coefficients are performed in \cite{Shu14}, \cite{Cho18}. For $n=1$, $G=C_2$ and using constant $\F_2$ coefficients, the cohomology of $B_{C_2}O(1)=B_{C_2}\Sigma_2$ is the test module used in the determination of the dual Steenrod algebra (\cite{HK96}) and equivariant Dyer-Lashof operations (\cite{Wil19}). The same calculation for $G=C_4$ is significantly more complicated (\cite{BC4S2}).
		
	A way to simplify the algebra involved is to use coefficients in the rational Burnside Green functor $A_{\Q}$. Indeed, a result by Greenlees-May reduces the computation of the $RO(G)$-graded cohomology of a space $X$ in $A_{\Q}$ coefficients to nonequivariant rational cohomology of the fixed points $X^H$ where $H$ ranges over the subgroups of $G$ (\cite{GM95}). This allows us to compute explicit descriptions of the Green functors $H^{\bigstar}_G(B_GU(n);A_{\Q})$ , $ H^{\bigstar}_G(B_GSO(n);A_{\Q})$, $H^{\bigstar}_G(B_GSp(n);A_{\Q})$ and so on.
	
	However, those explicit descriptions are rather inefficient: For $G=C_2$, the ring $H^{\bigstar}_G(B_GU(n))(G/G)$, according to the Greenlees-May decomposition, has $n^2+2n$ many algebra generators over the homology of a point, which is just under double the minimal amount $\frac{n^2+2n}2+1$ of generators that we can obtain (see the remarks after Proposition \ref{C2Chern2}). Part of the goal of this paper is to systematically obtain such minimal explicit descriptions; said another way, we are producing only the essential characteristic classes upon which all the others are built.
	
	Our method rests on equivariant generalizations of the following nonequivariant arguments: By a classical Theorem of Borel (\cite{BCM}), if $L$ is a connected compact Lie group, $T\subseteq L$ a maximal torus and $W_LT=N_LT/T$ is the Weyl group then, at least rationally,
	\begin{equation}
		H^*(BL)=H^*(BT)^{W_LT}
	\end{equation}
	Through this result, the characteristic classes in $H^*(BL)$ can be computed from $H^*(BS^1)$, as long as the Weyl group action is understood. For example, if we take $L=U(n)$ then $T=(S^1)^n$ and $W_LT=\Sigma_n$ acts on $H^*(BT;\Q)=\Q[a_1,...,a_n]$ by permuting the generators $a_i$. The fixed points under this permutation action are minimally generated by the elementary symmetric polynomials on the $a_i$, which are by definition the Chern classes $c_i$. In this way, $H^*(BU(n);\Q)=\Q[c_1,...,c_n]$.
	
	The same method can be performed equivariantly for $G=C_2$ and coefficients in $A_{\Q}$. There is an extra degree of complexity owing to the fact that $H^{\bigstar}_G(B_GS^1;A_{\Q})$ is not polynomial on one generator over $H^{\bigstar}_G(*;A_{\Q})$, but rather on two generators, one of which is idempotent (Proposition \ref{C2Chern1Class}). As such, in the $L=U(n)$ example, the elementary symmetric polynomials $c_i$ must be replaced by a family of more complicated polynomials $\genu,\gene_i,\genc_{s,j}$ (Proposition \ref{C2Chern3}). Moreover, while this family of generators is minimal, it is not algebraically independent i.e.  there are relations within this family. It is true however that $H^{\bigstar}_G(B_GU(n);A_{\Q})$ is a finite module over $H^{\bigstar}_G(*;A_{\Q})[\gene_1,...,\gene_n]$ where the $\gene_i$ are $C_2$-equivariant refinements of the classical Chern classes.
	
	We use this method to obtain explicit minimal descriptions of $H^{\bigstar}_{C_2}(B_{C_2}L;A_{\Q})$ where $L=U(n), SO(n), Sp(n)$. We also examine the cases of $L=O(n), SU(n)$ and of the non-compact Lie groups $L=U, SO, Sp, O, SU$. The resulting equivariant Chern, Pontryagin and symplectic classes are compared using the complexification, quaternionization and forgetful maps between the aforementioned Lie groups. Finally, we compute the effect of these characteristic classes on the direct sum of bundles and on the tensor product of line bundles.
	
	As for the organization of this paper, sections \ref{Conv} and \ref{C2RationalStems} set up the notation used throughout and contain the computation of the $C_2$ rational stable stems.
	
	Section \ref{Summary} contains a summary of all our results on $C_2$ characteristic classes. The proofs are then found in sections \ref{C2ChernSection}-\ref{SU} for the interested reader.
	
	Finally, appendix \ref{appen} contains the results on symmetric polynomials with relations that are critical for our presentation of $H^{\bigstar}_{C_2}(B_{C_2}U(n);A_{\Q})$. In particular, it contains an algorithm for writing every "symmetric polynomial" in terms of the "elementary symmetric polynomials" $\genc_{s,j}$; this also leads to an algorithm for explicitly obtaining the relations between the $\genc_{s,j}$. We have implemented these algorithms in a computer program available \href{https://github.com/NickG-Math/Symmetric_Polynomials}{here} (executable files are available \href{https://github.com/NickG-Math/Symmetric_Polynomials/releases}{here} for a quick demonstration). The appendix is completely self contained and independent of the rest of the paper. 
	
	\subsection{Acknowledgment}  We would like to thank Peter May for reading several earlier drafts of this paper. Through his numerous editing suggestions, readability was vastly improved.

	\section{Conventions and Notations}\label{Conv}
	
Throughout this paper, the ambient group is $G=C_2$ and all our $G$-Mackey functors are modules over the rational Burnside Green functor $A_{\Q}$:
\begin{equation}
	A_{\Q}=\begin{tikzcd}
		\frac{\Q[x]}{x^2=2x}\ar[d, "x\mapsto 2" left, bend right]\\
		\Q\ar[u, "1\mapsto x" right,bend right]
	\end{tikzcd}=\begin{tikzcd}
		\Q x\ar[d, "x\mapsto 2" left, bend right]\\
		\Q\ar[u, "1\mapsto x" right,bend right]
	\end{tikzcd}
	\oplus 
	\begin{tikzcd}
		\Q y\ar[d, bend right]\\
		0\ar[u, bend right]
	\end{tikzcd}
\end{equation}
where $x=\Tr(1)$ and $y=1-x/2$.\medbreak

The \emph{unreduced} cohomology of a $G$-space $X$ in $A_{\Q}$ coefficients is the $G$-Green functor defined on orbits as
\begin{equation}
	H^{\bigstar}_G(X)(G/H)=	[X_+, \Sigma^{\bigstar}HA_{\Q}]^H
\end{equation}
where $HA_{\Q}$ is the Eilenberg-MacLane spectrum associated to $A_{\Q}$ and the index $\bigstar$ is an element of the real representation ring $RO(G)=RO(C_2)$. This ring is spanned by the trivial representation $1$ and the sign representation $\sigma$ so $\bigstar=n+m\sigma$ for $n,m\in \Z$. Moreover, $H^{\bigstar}_G(X)$ is a Green functor algebra over the cohomology of a point $H^{\bigstar}_G(*)=H^{\bigstar}_G$.\medbreak
		
The same conventions apply to homology $H^G_{\bigstar}(X)$ (with the exception of the ring structure, which exists only when $X$ is an equivariant $H$-space).\medbreak
	
The advantage of using $A_{\Q}$ coefficients is twofold: \cite{GM95} prove that
	\begin{itemize}
	\item All rational Mackey functors (i.e. $A_{\Q}$ modules) are projective and injective, so we have the Kunneth formula:
		\begin{equation}
H_{\bigstar}^G(X\times Y)=H_{\bigstar}^G(X)\boxtimes_{H_{\bigstar}^G}H_{\bigstar}^G(Y)
\end{equation}
and duality formula:
		\begin{equation}
H_G^{\bigstar}(X)=\Hom_{H_{\bigstar}^G}(H_{\bigstar}^G(X),H_{\bigstar}^G)
\end{equation}
	\item We have the isomorphism of graded Green functors:
	\begin{equation}
		H_G^*(X)=\begin{tikzcd}
			H^*(X)^G\ar[d, bend right]\\
			H^*(X)\ar[u,bend right]
		\end{tikzcd}\oplus
		\begin{tikzcd}
			H^*(X^G)\ar[d, bend right]\\
			0\ar[u, bend right]
		\end{tikzcd}
	\end{equation}
\end{itemize}
The second bullet allows us to reduce equivariant computations to nonequivariant ones, as long as we use integer grading $*\in \Z$. Using the first bullet, integer graded cohomology together with the homology of a point recover the $RO(G)$-graded cohomology:
\begin{equation}
	H_G^{\bigstar}(X)=H_G^*(X)\boxtimes_{A_{\Q}}H_G^{\bigstar}
\end{equation}
As such, once $H_G^{\bigstar}$ is computed, we need only worry about integer grading.
	
		\section{\texorpdfstring{The $C_2$ rational stable stems}{The C2 rational stable stems}}\label{C2RationalStems}
The Green functor $H_{\bigstar}^G=H^{-\bigstar}_G$ agrees with the $G$-equivariant rational stable stems:
\begin{equation}
\pi_{\bigstar}^G(S)\otimes \Q=\pi_{\bigstar}^G(HA_{\Q})=H_{\bigstar}^G
\end{equation}
The generating classes for $H_{\bigstar}^G$ are the Euler and orientation classes. The Euler class $a_{\sigma}$ is the inclusion of north-south poles $S^0\hookrightarrow S^{\sigma}$ and its image in $H^{\bigstar}_G$ under the Hurewicz map generates a Mackey functor that we denote by $M_1$:
\begin{equation}
M_1\{a_{\sigma}\}=\begin{tikzcd}
\Q a_{\sigma}\ar[d, bend right]\\
0\ar[u, bend right]
\end{tikzcd}
\end{equation}
The orientation class $u_{\sigma}$ is the generator of the reduced nonequivariant homology group $\tilde H_1(S^{\sigma};\Z)=\Z$ (determined uniquely once we fix an orientation for $S^{\sigma}$) and generates a Mackey functor that we denote by $M_0^-$:
\begin{equation}
M_0^-\{u_{\sigma}\}=\begin{tikzcd}
0\ar[d, bend right]\\
\Q u_{\sigma}\ar[u, bend right]\ar[loop right, "C_2"]
\end{tikzcd}
\end{equation}
The Weyl group action by the generator $g\in C_2$ is $gu_{\sigma}=-u_\sigma$.

The square of $u_{\sigma}$, $u_{\sigma}^2$, is the restriction of the orientation class $u_{2\sigma}$ generating a Mackey functor that we denote by $M_0$:
\begin{equation}
M_0\{u_{2\sigma}\}=\begin{tikzcd}
\Q u_{2\sigma}\ar[d, "1" left, bend right]\\
\Q u^2_{\sigma}\ar[u, "2" right, bend right]
\end{tikzcd}
\end{equation}
This follows from the fact $M_0^-\boxtimes_{A_{\Q}} M_0^-=M_0$ and by the Kunneth formula for $S^{2\sigma}=S^{\sigma}\wedge S^{\sigma}$. Note that $a_{\sigma}u_{2\sigma}=0$ since $M_1\boxtimes_{A_{\Q}}M_0=0$.

Using the duality formula:
			\begin{equation}
\tilde H_{-*}^G(S^{-\sigma})=\tilde H_G^{*}(S^{\sigma})=\Hom_{A_{\Q}}(\tilde H_*^G(S^{\sigma}),A_{\Q})
	\end{equation}
we see that there is a class generating $M_1$ which when multiplied with $a_{\sigma}$ returns $y\in A_{\Q}$; we denote this class by $y/a_{\sigma}$. We similarly have classes $u_{\sigma}^{-1}$ and $x/u_{2\sigma}$ spanning $M_0^-$ and $M_0$ respectively. We have proven the following Proposition:

	\begin{prop}The $C_2$ equivariant rational stable stems are:
		\begin{equation}
	H^G_{k+n\sigma}=\begin{cases}
		M_0&\textup{if }k=n\text{ : even and}\neq 0\\
		M_0^{-}&\textup{if }k=n\text{ : odd} \\
		M_1&\textup{if }k=0\text{ , }n\neq 0\\
		A_{\Q}&\textup{if }k=n=0\\
		0&\textup{otherwise }
		\end{cases}
		\end{equation}
	and:
	\begin{itemize}
		\item $u_{2\sigma}^{j}, x/u_{2\sigma}^j$ generate a copy $M_0$ for each $j=1,2,...$.
		\item $u_{\sigma}^{2j+1}$ generate a copy $M_0^-$ for each $j\in \Z$.
		\item $a_{\sigma}^{j}, y/a_{\sigma}^j$ generate a copy of $M_1$ for each $j=1,2,...$.
		\item $1$ generates $A_{\Q}$.
	\end{itemize}
	\end{prop}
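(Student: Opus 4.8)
The plan is to use the Greenlees--May splitting recalled in Section~\ref{Conv} to reduce the whole computation to the rational homology of honest representation spheres, which is elementary. Desuspending, $H^G_{k+n\sigma}=\tilde H^G_{k+n\sigma}(S^0)\cong\tilde H^G_k(S^{-n\sigma})$ with $k\in\Z$ an ordinary integer. When $n\le 0$ the target $S^{-n\sigma}$ is a genuine representation sphere, so the second bullet of Section~\ref{Conv} applies on the nose: the value at the orbit $G/e$ is $\tilde H_k(S^{-n\sigma};\Q)=\tilde H_k(S^{-n};\Q)$ carrying its Weyl action, while the value at $G/G$ is assembled from $\tilde H_k(S^{-n};\Q)^{C_2}$ together with $\tilde H_k((S^{-n\sigma})^{C_2};\Q)=\tilde H_k(S^0;\Q)$.

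Substituting the standard facts that $\tilde H_k(S^m;\Q)=\Q$ exactly when $k=m$ and that the generator $g\in C_2$ acts on $\tilde H_m(S^{m\sigma};\Q)$ by $\det(-\id_{\R^m})=(-1)^m$, one reads off, for each $m\ge 0$, the Mackey functor $\tilde H^G_k(S^{m\sigma})$: it equals $A_\Q$ when $k=m=0$; it equals $M_1$, generated by the Euler power $a_\sigma^m$, when $0=k<m$; it equals $M_0$ when $k=m>0$ is even; it equals $M_0^-$ when $k=m>0$ is odd; and it vanishes otherwise. In the last two cases the generator is the orientation power $u_\sigma^m$, and to confirm that one gets $M_0$ versus $M_0^-$ (with no hidden extension) one uses (a) the K\"unneth formula for $S^{m\sigma}=S^\sigma\wedge\dots\wedge S^\sigma$ and the identity $M_0^-\boxtimes_{A_\Q}M_0^-=M_0$, which exhibit $u_\sigma^2$ as $\Res^G_e$ of a class $u_{2\sigma}$ generating $M_0$, and (b) the Mackey double-coset relation $\Res^G_e\Tr^G_e=1+g$, which on $\tilde H_m(S^{m\sigma};\Q)$ is multiplication by $1+(-1)^m$ and so pins the transfer down (it is $2$ for $m$ even, zero for $m$ odd). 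The relation $a_\sigma u_{2\sigma}=0$ is then forced by $M_1\boxtimes_{A_\Q}M_0=0$.

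For $n>0$ the spectrum $S^{-n\sigma}$ is not a space, so I would instead invoke Spanier--Whitehead duality $S^{-n\sigma}=D(S^{n\sigma})$ together with the duality formula of Section~\ref{Conv}: restricted to integer grading this gives $\tilde H^G_k(S^{-n\sigma})\cong\Hom_{A_\Q}\!\bigl(\tilde H^G_{-k}(S^{n\sigma}),A_\Q\bigr)$. Since each of $A_\Q$, $M_0$, $M_0^-$ and $M_1$ is self-dual over $A_\Q$, this just mirrors the previous case, and the ``inverse'' classes $y/a_\sigma^j$, $u_\sigma^{-1}$, $x/u_{2\sigma}^j$ turn up as the dual-basis elements pairing to $1$ against $a_\sigma^j$, $u_\sigma$, $u_{2\sigma}^j$. (Alternatively one can avoid the case split entirely by running the long exact sequences of the cofiber sequences $S^{m\sigma}\to S^{(m+1)\sigma}\to\Sigma^{m+1}(C_2)_+$ inductively in both directions in $m\in\Z$; the connecting homomorphism is each time the projection onto an $M_0$-summand, and the same groups drop out.)

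Putting the pieces together, for a fixed $n$ the integer-graded groups $\tilde H^G_k(S^{\mp n\sigma})$ are nonzero in at most two degrees $k$ --- namely $k=0$, where one gets $M_1$ if $n\ne 0$ and $A_\Q$ if $n=0$, and one diagonal degree, where one gets $M_0$ or $M_0^-$ according to parity --- and vanish in all the others; re-indexed through $H^G_{k+n\sigma}\cong\tilde H^G_k(S^{-n\sigma})$ this is exactly the stated list of cases, with the indicated generators. I do not expect a serious obstacle here; the two points deserving care are (i) nailing down the Mackey structure --- that the transfer is multiplication by $1+(-1)^m$, so that one genuinely obtains $M_0$ or $M_0^-$ rather than some extension involving a lower stratum --- and (ii) keeping the variance straight in the duality step so that the $n>0$ and $n<0$ halves fit together. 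The small Mackey-functor inputs $M_0^-\boxtimes_{A_\Q}M_0^-=M_0$, $M_1\boxtimes_{A_\Q}M_0=0$ and the self-duality of the four building blocks are best verified once at the outset, since the argument relies on them repeatedly.
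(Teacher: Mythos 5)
Your proposal is correct and follows essentially the same route as the paper: the Greenlees--May splitting to read off $\tilde H^G_*(S^{m\sigma})$ for $m\ge 0$ (yielding $a_\sigma^m$, $u_\sigma^m$, $u_{2\sigma}^j$ and the Mackey functors $M_1$, $M_0^-$, $M_0$, with the K\"unneth identity $M_0^-\boxtimes_{A_\Q}M_0^-=M_0$ identifying $u_\sigma^2=\Res(u_{2\sigma})$), followed by the duality formula to produce the negative cone classes $y/a_\sigma^j$, $u_\sigma^{-1}$, $x/u_{2\sigma}^j$. The only differences are cosmetic (you compute each $S^{m\sigma}$ directly from the splitting rather than as K\"unneth powers of $S^\sigma$, and your double-coset check of the transfer is redundant since the splitting already determines the Mackey structure).
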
	
	To spell things out, as a ring, the $C_2/C_2$ level of $H^G_{\bigstar}$ is $\Q[x, u_{2\sigma}, x/u_{2\sigma}, a_{\sigma}, y/a_{\sigma}]$ modulo the relations:
	\begin{gather}
	x^2=2x\\
	xu_{2\sigma}=2u_{2\sigma}\\
	ya_{\sigma}=a_{\sigma}\\
	u_{2\sigma}(x/u_{2\sigma}^i)=x/u_{2\sigma}^{i-1}\\
	a_{\sigma}(y/a_{\sigma}^i)=y/a_{\sigma}^{i-1}\\
	a_{\sigma}u_{2\sigma}=0\\
	a_{\sigma}(x/u_{2\sigma}^i)=0\\
	u_{2\sigma}(y_1/a_{\sigma}^i)=0\\
	(x/u_{2\sigma}^i)(y/a_{\sigma}^j)=0
	\end{gather}

	\section{\texorpdfstring{Summary of the $C_2$ characteristic classes}{Summary of the C2 characteristic classes}}\label{Summary}
	
	We summarize our results on $C_2$ characteristic classes in $A_{\Q}$ coefficients that we shall prove in sections \ref{C2ChernSection}-\ref{SU}. 
	
	Slightly abusing the notation, we shall use $H^*_G(X)$ to simultaneously denote both the $G$-Green functor and its top level $H^*_G(X)(G/G)$.
	We can do that because knowledge of the top and bottom levels and of the restriction map can be used to recover the Mackey functor, as long as the restriction is surjective (the transfer is computed from $\Tr(a)=xb$ where $\Res(b)=a$). In all cases we encounter, restriction is indeed surjective so it suffices to describe the top level and how generators restrict to the bottom (nonequivariant) level.
	
	\subsection{Chern classes}

	We start with the results on Chern classes.
	
	We view $H^*_G(B_GU(n))$ as an augmented algebra over $H^*(BU(n))$ with the augmentation being restriction.
	
	\begin{prop}\label{C2Chern1}The augmentation
		\begin{equation}
		\Res:H^*_G(B_GU(n))\to H^*(BU(n))
		\end{equation}
		is a split surjection, so the nonequivariant Chern classes have $C_2$ equivariant refinements.
	\end{prop}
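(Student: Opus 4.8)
The plan is to exploit the Greenlees–May decomposition stated in the second bullet of Section~\ref{Conv}, which gives, in integer grading, a natural splitting
\begin{equation}
H_G^*(X)=\begin{tikzcd}
H^*(X)^G\ar[d, bend right]\\
H^*(X)\ar[u,bend right]
\end{tikzcd}\oplus
\begin{tikzcd}
H^*(X^G)\ar[d, bend right]\\
0\ar[u, bend right]
\end{tikzcd}
\end{equation}
of graded Green functors. Applying this with $X=B_GU(n)$, the top level $H_G^*(B_GU(n))(G/G)$ is identified with $H^*((B_GU(n))^{\{e\}})^G\oplus H^*((B_GU(n))^{G})$, and the restriction map $\Res^G_e$ is precisely the projection onto the first summand $H^*(B_GU(n))^G\hookrightarrow H^*(B_GU(n))$, followed by the inclusion of invariants. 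So the first step is simply to observe that $\Res$ is surjective onto $H^*(B_GU(n))^G$, not all of $H^*(B_GU(n))$; but since $U(n)$ is connected, $H^*(B_GU(n))=H^*(BU(n))$ carries the trivial $G$-action (the underlying space of $B_GU(n)$ is a model for $BU(n)$ and the $G$-action on cohomology is trivial because it is trivial on $\pi_0$ and $BU(n)$ is simply connected, hence by naturality the action on all of $H^*(BU(n))$ is trivial — alternatively this is part of what it means for $U(n)$ to have a trivial $G$-action as a topological group in forming $B_GU(n)$). Hence $H^*(B_GU(n))^G=H^*(BU(n))$ and $\Res$ is surjective.

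Given surjectivity, the splitting is then read off directly from the Greenlees–May decomposition itself: the summand $\begin{tikzcd}[cramped,sep=small]H^*(X)^G\ar[d, bend right]\\ H^*(X)\ar[u,bend right]\end{tikzcd}$ is a constant-type Mackey functor on which $\Res$ is an isomorphism of rings in each degree, so the composite
\begin{equation}
H^*(BU(n))=H^*(B_GU(n))^G\hookrightarrow H_G^*(B_GU(n))(G/G)
\end{equation}
picking out the first summand is a ring homomorphism splitting $\Res$. This is the algebra map $s$ whose existence is asserted; it is a section of augmented algebras over $H^*(BU(n))$ because both the map and its composite with $\Res$ are ring maps and the composite is the identity. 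Tensoring up with $H_G^{\bigstar}$ over $A_{\Q}$ via the formula $H_G^{\bigstar}(X)=H_G^*(X)\boxtimes_{A_{\Q}}H_G^{\bigstar}$ extends this to an $RO(G)$-graded statement if desired, but the integer-graded statement is what is needed here.

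The only genuine content, and the step I expect to require a word of care, is the claim that the $G$-action on $H^*(BU(n);\Q)$ is trivial, equivalently that $H^*(B_GU(n))^G=H^*(B_GU(n))$; without this the image of $\Res$ would be a proper subring of $H^*(BU(n))$ and the nonequivariant Chern classes would not all lift. This follows because $B_GU(n)$ here denotes the classifying space for the trivial $C_2$-group $U(n)$ (equivalently, equivariant bundles with trivial action on the structure group), so its underlying space is $BU(n)$ with the $G$-action coming only from the ambient equivariant machinery; since $BU(n)$ is simply connected with free cohomology generated in even degrees by the universal Chern classes, and these are characterized by a universal property (the Chern classes of the tautological bundle) that is manifestly $G$-fixed, the action is trivial. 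Everything else — surjectivity of restriction, the section being a ring map, compatibility with the augmentation — is then formal from the decomposition. I would state the triviality of the action as a short lemma or an inline remark citing \cite{GM95}, and the rest of the proof is two lines.
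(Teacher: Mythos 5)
Your route is genuinely different from the paper's. The paper never applies the Greenlees--May splitting to $B_GU(n)$ wholesale to produce the section: it first proves the maximal torus isomorphism $H^*_G(B_GU(n))=(H^*_G(B_GU(1))^{\otimes n})^{\Sigma_n}$, does the $n=1$ case explicitly (where the triviality of the $C_2$-action on $H^*(\C P^{\infty};\Q)$ is checked by a degree argument on the $2$-skeleton $\C P^1$, precisely because the action on the space, given by sign changes on alternate homogeneous coordinates, is not trivial), and then exhibits explicit classes $\gene_i$ with $\Res(\gene_i)=c_i$; since $H^*(BU(n))=\Q[c_1,\dots,c_n]$ is a free commutative algebra, $c_i\mapsto\gene_i$ extends to a unital ring section, which is the splitting actually used in Propositions \ref{C2Chern2} and \ref{C2ChernIso}. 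Your shortcut is viable, but as written two steps need repair.

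First, your main justification for triviality of the $C_2$-action on $H^*(BU(n);\Q)$ --- ``trivial on $\pi_0$ and $BU(n)$ is simply connected, hence trivial on cohomology'' --- is a non sequitur: complex conjugation on $\C P^{\infty}$ is a self-map of a simply connected space acting by $-1$ on $H^2$. Likewise, the fact that the structure group $U(n)$ carries the trivial $G$-action does not make the $G$-action on the space $B_GU(n)$ homotopically trivial; its fixed points are $\coprod_{m}BU(m)\times BU(n-m)$, and the paper has to argue triviality on cohomology by hand even for $n=1$. Your parenthetical second argument is the correct one and should be the proof: on the Grassmannian model $Gr(n,\C^{\infty\rho})$ the generator $g$ acts $\C$-linearly, so $g$ pulls the tautological $n$-plane bundle back to an isomorphic bundle and therefore fixes its Chern classes, which generate $H^*(BU(n);\Q)$. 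Second, the inclusion of the first Greenlees--May summand into the top level is multiplicative but not unital: its unit is the idempotent $x/2$ (in the paper's $n=1$ notation, $\geneidem_1^0=x/2$), not $1$. So it is a section of graded $\Q$-modules --- enough for ``split surjection'' and for the existence of refinements --- but it is not literally a map of augmented algebras as you claim. To get the ring-level section that the later structure over $H^*(BU(n))\otimes A_{\Q}$ requires, define the section on the polynomial generators by choosing lifts of the $c_i$ (e.g. the $\gene_i$ of Proposition \ref{C2ChernIso}, or $(x/2)\gene_i$) and extend unitally using freeness of $\Q[c_1,\dots,c_n]$.
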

We fix a section of the augmentation, i.e. equivariant refinements $\gene_1,...,\gene_n$ of the Chern classes, according to Proposition \ref{C2ChernIso}.
	
	\begin{prop}\label{C2Chern2}There exist elements $\genu\in H^0_G(B_GU(n))$ and $\genc_{s,j}\in H^{2s}_G(B_GU(n))$ for $1\le s<n$ and $1\le j\le n-s$,  generating $H_G^*(B_GU(n))$ as an augmented algebra over $H^*(BU(n))\otimes A_{\Q}$:
		\begin{equation}
		H_G^*(B_GU(n))=\frac{(H^*(BU(n))\otimes A_{\Q})[\genu,\genc_{s,j}]}{\Res(\genu), \Res(\genc_{s,j}), S}
		\end{equation}
		where the finite set of relations $S\subseteq \Q[\genu,\gene_i,\genc_{s,j}]$ is described in Proposition \ref{AlgebraCorollaryQ}. \\
		The $\gene_i$ are algebraically independent and for each degree $*$, $H_G^*(B_GU(n))$ is a finitely generated module over $\Q[\gene_1,...,\gene_n]$.\\
		The generating family $\{\genu, \gene_i,\genc_{s,j}\}$ has cardinality $\frac{n^2+2n}2+1$ and is a minimal generating set of $H_G^*(B_GU(n))$ as an $A_{\Q}$ algebra, in the sense that any other generating set has at least $\frac{n^2+2n}2+1$ many elements.	
	\end{prop}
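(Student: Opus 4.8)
The plan is to carry out the equivariant analogue of Borel's computation $H^*(BU(n);\Q)=H^*(BT;\Q)^{\Sigma_n}$. First I would invoke the equivariant Borel isomorphism $H^*_G(B_GU(n))\cong H^*_G(B_GT)^{\Sigma_n}$ for $T=(S^1)^n$ the maximal torus and $\Sigma_n$ the Weyl group (this is Proposition \ref{C2Chern3}; rationally it is forced by the second bullet of \cite{GM95}, since on underlying spaces and on $G$-fixed points it reduces to the classical theorem of \cite{BCM} applied to $BU(n)$ and to each component $BU(p)\times BU(n-p)$ of $(B_GU(n))^{G}$, compatibly with the $\Sigma_n$-action). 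By the K\"unneth isomorphism of \cite{GM95}, $H^*_G(B_GT)=H^*_G(B_GS^1)^{\boxtimes_{A_\Q}n}$, and Proposition \ref{C2Chern1Class} presents each factor as generated over $A_\Q$ by the refined first Chern class (degree $2$) together with one degree-$0$ idempotent, subject to the relations recorded there. Writing $\rho_1,\dots,\rho_n$ for the refined Chern roots and $\geneidem_1,\dots,\geneidem_n$ for the idempotents, this realizes $H^*_G(B_GT)$ as an explicit ``polynomial algebra with idempotents'' over $A_\Q$ on which $\Sigma_n$ permutes the pairs $(\rho_i,\geneidem_i)$, and the problem becomes the determination of its ring of $\Sigma_n$-invariants.

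This invariant ring is exactly the algebraic object treated in the appendix, so I would quote Proposition \ref{AlgebraCorollaryQ}: it produces a minimal set of ``elementary symmetric'' generators --- which I match with $\genu=\sum_i\geneidem_i$, the refined Chern classes $\gene_i$ (the elementary symmetric functions in the $\rho_k$, which refine the ordinary Chern classes), and the remaining mixed classes $\genc_{s,j}$ --- together with a finite explicit set $S$ of relations among them and the stated minimal cardinality. Transported through the isomorphisms above, this yields the surjection $(H^*(BU(n))\otimes A_\Q)[\genu,\genc_{s,j}]\twoheadrightarrow H^*_G(B_GU(n))$ of the proposition and identifies its kernel with $(\Res\genu,\Res\genc_{s,j},S)$. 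Here $\Res\genu=0=\Res\genc_{s,j}$ holds on the nose, since $\genu$ and the $\genc_{s,j}$ are built from the $\geneidem_i$, which are supported on $G$-fixed points and so restrict to $0$ in $H^*(BS^1)$ and hence in $H^*(BU(n))$; this is consistent with the splitting $c_i\mapsto\gene_i$ of Proposition \ref{C2Chern1} fixed via Proposition \ref{C2ChernIso}.

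It remains to treat the auxiliary claims and to locate the real difficulty. Algebraic independence of the $\gene_i$ is forced by restriction: a relation $P(\gene_1,\dots,\gene_n)=0$ restricts to $P(c_1,\dots,c_n)=0$ in $\Q[c_1,\dots,c_n]$, hence $P=0$. For finiteness over $\Q[\gene_1,\dots,\gene_n]$ I would use the graded ring splitting $H^*_G(B_GU(n))\cong H^*(BU(n))\oplus\bigoplus_{p=0}^n H^*(BU(p)\times BU(n-p))$ coming from $A_\Q=\Q x\oplus\Q y$: on the first summand $\gene_i\mapsto c_i$ and $\Q[c_i]$ is free of rank one over itself, while on each further summand the $\gene_i$ restrict to the Chern classes of the underlying rank-$n$ bundle over $BU(p)\times BU(n-p)$, which form a homogeneous system of parameters, so that summand is a finite module by the classical argument; assembling over the summands gives the claim. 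The cardinality $\#\{\genu,\gene_i,\genc_{s,j}\}$ is read off the index ranges, and its minimality is the minimality assertion of Proposition \ref{AlgebraCorollaryQ}; in transferring it, the point to check is that the appendix counts indecomposables over $\Q$ whereas here they should be counted over $A_\Q$ --- but the ring splitting above, together with the fact that each $\gene_i$ at once generates the first summand and feeds into the others via the idempotent $x/2\in A_\Q$, shows the two counts agree. The genuinely hard step, and thus the main obstacle, is the appendix's explicit determination of the indecomposables and of the relations of the symmetric-polynomials-with-idempotents ring, on which both the set $S$ and minimality rest; a lesser obstacle is making the equivariant Borel isomorphism of the first step rigorous with $A_\Q$ coefficients.
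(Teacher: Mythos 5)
Your proposal is correct and follows essentially the same route as the paper: the Greenlees--May idempotent splitting reduces the maximal torus map to the classical Borel theorem on each fixed-point component $BU(m)\times BU(n-m)$ (with $\Sigma_n$-invariants of the sum over sign configurations matching $\Sigma_m\times\Sigma_{n-m}$-invariants of one component), the K\"unneth formula plus the $n=1$ computation identifies $H^*_G(B_GT)$, and the generators, relations $S$, and minimality are quoted from the appendix's invariant-theory results (Corollaries \ref{AlgebraCorollaryQ} and \ref{AlgebraCorollary2Q}), which is indeed where the real work lies. Your direct arguments for algebraic independence of the $\gene_i$ (via restriction to $\Q[c_1,\dots,c_n]$) and for finiteness (via a homogeneous system of parameters on each fixed-point summand) are harmless substitutes for the paper's derivation of these facts from the explicit $A_{\Q}$-basis in the appendix.
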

	
Substituting $H^*(BU(n))=\Q[c_1,...,c_n]$ in the formula for $H^*_G(B_GU(n))$ gives:
	\begin{prop}\label{C2Chern3}As an algebra over $A_{\Q}$,
		\begin{equation}
		H_G^*(B_GU(n))=\frac{A_{\Q}[\genu, \gene_i,\genc_{s,j}]}{x\genu,x\genc_{s,j},S}
		\end{equation}
	\end{prop}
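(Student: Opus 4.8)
The plan is to deduce Proposition~\ref{C2Chern3} directly from Proposition~\ref{C2Chern2} by substituting the known nonequivariant ring $H^*(BU(n)) = \Q[c_1,\dots,c_n]$ into the presentation. First I would unwind the coefficient ring $H^*(BU(n)) \otimes A_{\Q}$: since $H^*(BU(n))$ is a free polynomial ring on the Chern classes $c_i$, we have $H^*(BU(n)) \otimes A_{\Q} = A_{\Q}[c_1,\dots,c_n]$. Now the equivariant refinements $\gene_i$ of Proposition~\ref{C2Chern1} are, by construction, lifts of the $c_i$ along the splitting of the augmentation $\Res$, so modulo the ideal generated by the relations $\Res(\genu),\Res(\genc_{s,j})$ the classes $\Res(\gene_i)$ are precisely the $c_i$. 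Thus the polynomial generators $c_i$ of the coefficient ring can be identified with the $\gene_i$ already appearing in the list $\{\genu,\gene_i,\genc_{s,j}\}$, and the presentation of Proposition~\ref{C2Chern2} collapses to
\[
H_G^*(B_GU(n)) = \frac{A_{\Q}[\genu,\gene_i,\genc_{s,j}]}{\big(\Res(\genu),\,\Res(\genc_{s,j}),\,S\big)}.
\]

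The remaining work is to rewrite the relations $\Res(\genu)$ and $\Res(\genc_{s,j})$ in the simple form $x\genu$ and $x\genc_{s,j}$. The key observation is the structure of $A_{\Q}$: restriction $\Res\colon A_{\Q}(C_2/C_2)\to A_{\Q}(C_2/e)$ sends $x\mapsto 2$ and $y = 1-x/2 \mapsto 0$, and the bottom level $A_{\Q}(C_2/e)=\Q$ is a summand obtained by inverting (or rather, projecting away from) $x$. More precisely, in the Green functor $H^*_G(B_GU(n))$ the class $\Res(\genu)$ lives in the bottom level, but as an element of the top-level presentation the statement "$\Res$ of $\genu$ vanishes" is equivalent to the top-level relation $x\genu = 0$: this is because the kernel of $\Res$ on $A_{\Q}$ (and hence on any $A_{\Q}$-algebra concentrated appropriately) is exactly the ideal $(y)$, and multiplication by $x$ kills $y$ while fixing $x\mapsto 2$, so $x$ acts as (twice) the idempotent projection onto the image of the transfer. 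Since $\genu,\genc_{s,j}$ are by Proposition~\ref{C2Chern2} the classes whose restrictions vanish (they generate the augmentation ideal relative to the nonequivariant part), the relations $\Res(\genu)=0$, $\Res(\genc_{s,j})=0$ translate to $x\genu=0$, $x\genc_{s,j}=0$ in the $A_{\Q}$-algebra presentation.

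The main obstacle is bookkeeping rather than conceptual: one must be careful that the set $S$ of relations (described in Proposition~\ref{AlgebraCorollaryQ}, living in $\Q[\genu,\gene_i,\genc_{s,j}]$) is genuinely unchanged under the substitution — i.e.\ that $S$ does not already involve the $c_i$ in a way that interacts with the identification $c_i \leftrightarrow \gene_i$ — and that no new relations among the $\gene_i$ are introduced (this is guaranteed by the algebraic independence of the $\gene_i$ asserted in Proposition~\ref{C2Chern2}). One also needs that the $x$-multiples of the generators $\gene_i$ are \emph{not} forced to vanish: indeed $x\gene_i \neq 0$ since $\gene_i$ has nonzero restriction $c_i$, consistent with the presentation only killing $x\genu$ and $x\genc_{s,j}$. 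Assembling these observations gives the displayed formula, completing the proof.
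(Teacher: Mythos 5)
Your proposal is correct and matches the paper's own derivation: the paper obtains Proposition \ref{C2Chern3} precisely by substituting $H^*(BU(n))=\Q[c_1,\dots,c_n]$ (with $c_i$ identified with the chosen refinements $\gene_i$) into the presentation of Proposition \ref{C2Chern2}, and it records your key translation step — that the relations $x\genu=0$, $x\genc_{s,j}=0$ are equivalent to $\genu,\genc_{s,j}$ having trivial restriction (via $x\cdot(-)=\Tr\Res(-)$ rationally) — as the first observation immediately after the statement. The underlying content, of course, still sits in Corollary \ref{AlgebraCorollaryQ}, exactly as you implicitly assume by taking Proposition \ref{C2Chern2} as given.
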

	
	Two observations:
	\begin{itemize}
		\item The relations $x\genu=0, x\genc_{s,j}=0$ are equivalent to $\genu,\genc_{s,j}$ having trivial restrictions (i.e. augmentations) respectively. This completes the description of the Mackey functor structure of $H^*_G(B_GU(n))$.
		\item The $\frac{n^2+2n}2+1$ many generators of the generating set $\{\genu, \gene_i,\genc_{s,j}\}$ are just over half of the $n^2+2n$ many generators given by the idempotent decomposition (\cite{GM95}) of the Mackey functor $H_G^*(B_GU(n))$.\medbreak
	\end{itemize}
	
	For $n=1$ the computation takes a simpler form:
	\begin{equation}
	H_G^*(B_GU(1))=A_{\Q}[\genu,\gene_1]/(\genu^2=\genu, x\genu)
	\end{equation}
	To simplify the notation in the next Proposition, we set $\genw=\gene_1\in H^2_G(B_GU(1))$.
	
	\begin{prop}\label{C2ChernIso}	The maximal torus inclusion $U(1)^n\hookrightarrow U(n)$ induces an isomorphism
		\begin{gather}
		H^*_G(B_GU(n))=	(H^*_G(B_GU(1))^{\otimes n})^{\Sigma_n}
		\end{gather}	
		Explicitly:
		\begin{equation}
		A_{\Q}[\genu,\gene_i,\genc_{s,j}]/(x\genu, x\genc_{s,j}, S)=(A_{\Q}[\genu_i,\genw_i]/(x\genu_i))^{\Sigma_n}
		\end{equation}
		under the identifications:
		\begin{gather}
		\genu=\sigma_1(\genu_1,...,\genu_n)=\sum_{1\le m\le n}\genu_m\\
		\gene_i=\sigma_i(\genw_1,...,\genw_n)=\sum_{m_*\in K_i}\genw_{m_1}\cdots \genw_{m_i}\\
		\genc_{s,j}=\sum_{(m_*,l_*)\in K_{s,j}}\genw_{m_1}\cdots \genw_{m_s}\genu_{l_1}\cdots \genu_{l_j}
		\end{gather}
		where $K_i$ consists of all partitions $1\le m_1<\cdots<m_i\le n$ and $K_{s,j}\subseteq K_s\times K_j$ consists of all pairs of disjoint partitions. The polynomial $\sigma_i$ is the $i$-th elementary symmetric polynomial.
	\end{prop}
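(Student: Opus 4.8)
The plan is to realize the asserted isomorphism as the map induced by the inclusion $\iota\colon T=U(1)^n\hookrightarrow U(n)$ of the maximal torus, to deduce its bijectivity from classical facts by splitting off the equivariant data as in section~\ref{Conv}, and finally to read off the identifications of $\genu,\gene_i,\genc_{s,j}$. Since equivariant classifying spaces take products to products, $B_GT=(B_GU(1))^n$, so the Künneth isomorphism of section~\ref{Conv} (valid because every $A_\Q$-module is flat) gives $H^*_G(B_GT)=H^*_G(B_GU(1))^{\otimes n}$ with $W_{U(n)}T=\Sigma_n$ permuting the tensor factors, and $B_G\iota$ induces a ring homomorphism $\phi\colon H^*_G(B_GU(n))\to H^*_G(B_GU(1))^{\otimes n}$. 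For $w\in N_{U(n)}T$ one has $\iota\circ(c_w|_T)=c_w\circ\iota$, where $c_w$ denotes conjugation by $w$; applying $B_G$ and using that $B_G(c_w)$ is $C_2$-equivariantly homotopic to the identity on $B_GU(n)$ (an inner automorphism of the structure group induces a trivial self-map of the classifying space, equivariantly too), we get that $\phi$ is fixed under the action of the image of $w$ in $\Sigma_n$. Hence $\phi$ factors through $(H^*_G(B_GU(1))^{\otimes n})^{\Sigma_n}$, and the goal is to show that this corestriction is an isomorphism.

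First I would reduce to two nonequivariant statements. By the second bullet of section~\ref{Conv}, the natural splitting of $H^*_G(X)$ as the Mackey functor with top level $H^*(X)^G$ and bottom level $H^*(X)$, plus the Mackey functor with top level $H^*(X^G)$ and bottom level $0$, is precisely the idempotent decomposition of $A_\Q$-modules induced by the splitting of $A_\Q$ recalled there; it is therefore $\Sigma_n$-equivariant and compatible with the Künneth isomorphism, via $H^*(X\times Y)=H^*(X)\otimes H^*(Y)$ on underlying spaces and $(X\times Y)^G=X^G\times Y^G$ on fixed points. Passing to invariants of a two-term direct sum is harmless, so $\phi$ decomposes as $\phi_u\oplus\phi_\Phi$, where $\phi_u$ is the map $H^*(BU(n);\Q)\to(H^*((B_GU(1))^n;\Q))^{\Sigma_n}$ induced by the underlying torus inclusion and $\phi_\Phi$ the map $H^*((B_GU(n))^G;\Q)\to(H^*(((B_GU(1))^n)^G;\Q))^{\Sigma_n}$ induced on $C_2$-fixed points. (The $C_2$-action on $H^*(B_GU(1);\Q)=\Q[t]$ is trivial, since the tautological line bundle over $B_GU(1)$ carries a $C_2$-equivariant structure, so taking $C_2$-fixed points is vacuous on the underlying side.) It now suffices to prove that $\phi_u$ and $\phi_\Phi$ are isomorphisms onto their displayed invariant targets.

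The underlying torus inclusion is the classical $BT=(\mathbb{CP}^\infty)^n\to BU(n)$, so $\phi_u$ is exactly Borel's isomorphism $\Q[c_1,\dots,c_n]\xrightarrow{\sim}\Q[t_1,\dots,t_n]^{\Sigma_n}$, $c_i\mapsto\sigma_i(t_1,\dots,t_n)$. For $\phi_\Phi$ I would use the model $B_GU(m)=\mathrm{Gr}_m(\mathcal U)$, the Grassmannian of complex $m$-planes in a complete complex $C_2$-universe $\mathcal U=\mathcal U^+\oplus\mathcal U^-$ split into its trivial-type and sign-type isotypic summands, $\mathcal U^\pm\cong\mathbb{C}^\infty$. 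A $C_2$-invariant $m$-plane is the sum of its intersections with $\mathcal U^+$ and $\mathcal U^-$, so $(B_GU(m))^{C_2}\simeq\coprod_{k=0}^m BU(m-k)\times BU(k)$; for $m=1$ this is $\mathbb{CP}^\infty\sqcup\mathbb{CP}^\infty$ indexed by the type $\epsilon\in\{+,-\}$ of the invariant line, whence $((B_GU(1))^n)^{C_2}=\coprod_{\epsilon\in\{+,-\}^n}(\mathbb{CP}^\infty)^n$, with $\Sigma_n$ permuting the coordinates and the entries of $\epsilon$ simultaneously. Collecting the $\binom{n}{k}$ components with exactly $n-k$ plus signs, this block of $H^*$ is the coinduced module $\Coind_{\Sigma_{n-k}\times\Sigma_k}^{\Sigma_n}\Q[r_1,\dots,r_n]$, the subgroup permuting $\{r_1,\dots,r_{n-k}\}$ and $\{r_{n-k+1},\dots,r_n\}$ separately, and by the elementary identity $(\Coind_H^{\Sigma_n}M)^{\Sigma_n}=M^H$ its $\Sigma_n$-invariants are $\Q[r_1,\dots,r_{n-k}]^{\Sigma_{n-k}}\otimes\Q[r_{n-k+1},\dots,r_n]^{\Sigma_k}=H^*(BU(n-k)\times BU(k);\Q)$. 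On the other side, $\phi_\Phi$ carries the $k$-th component $BU(n-k)\times BU(k)$ of $(B_GU(n))^{C_2}$ into this block by the map which, on each $\epsilon$-component, sorts the plus-lines from the minus-lines, i.e. a product of two classical torus inclusions $(\mathbb{CP}^\infty)^{n-k}\to BU(n-k)$ and $(\mathbb{CP}^\infty)^k\to BU(k)$; after passing to invariants this becomes the classical Borel isomorphism in each block. Summing over $k=0,\dots,n$ shows $\phi_\Phi$ is an isomorphism onto the invariants, and therefore so is $\phi$.

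Finally, with $\phi$ known to be an isomorphism onto $(H^*_G(B_GU(1))^{\otimes n})^{\Sigma_n}$, I would \emph{define} $\gene_i:=\phi^{-1}(\sigma_i(\genw_1,\dots,\genw_n))$, $\genu:=\phi^{-1}(\sum_m\genu_m)$ and $\genc_{s,j}:=\phi^{-1}\bigl(\sum_{(m_*,l_*)\in K_{s,j}}\genw_{m_1}\cdots\genw_{m_s}\genu_{l_1}\cdots\genu_{l_j}\bigr)$; the identifications in the statement then hold by construction. Since $\phi$ is a map of Mackey functors, its bottom-level component is the classical (injective) Borel map $H^*(BU(n))\hookrightarrow H^*((\mathbb{CP}^\infty)^n)$, and $\sigma_i(\genw_1,\dots,\genw_n)$ restricts to $\sigma_i(\Res\genw_1,\dots,\Res\genw_n)=\sigma_i(t_1,\dots,t_n)$, which is the Borel image of $c_i$; hence $\Res\gene_i=c_i$, so the $\gene_i$ are genuine equivariant refinements of the Chern classes (as in Proposition \ref{C2Chern1}, fixing the section used there). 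Likewise $\Res\genu=0$ and $\Res\genc_{s,j}=0$, because $\Res\genu_l=0$ in $H^*_G(B_GU(1))$ and every monomial involved contains such a factor. Transporting through $\phi^{-1}$ the presentation of the ring of symmetric polynomials supplied by Proposition \ref{AlgebraCorollaryQ} of the appendix — generation by the elementary symmetric polynomials $\sum_m\genu_m,\ \sigma_i(\genw_1,\dots,\genw_n),\ \genc_{s,j}$ together with the finite relation set $S$ — then yields the presentations of Propositions \ref{C2Chern2}--\ref{C2Chern3}. The main obstacle is the geometric fixed-point step: choosing the correct $C_2$-models for $B_GU(n)$ and $B_GU(1)^n$, verifying that $\Sigma_n$ acts on $((B_GU(1))^n)^{C_2}$ by the simultaneous permutation of coordinates and of the $\pm$-labels, and carrying out the coinduction bookkeeping that repackages the $\Sigma_n$-invariants as $\bigoplus_k H^*(BU(n-k)\times BU(k))$ compatibly with $\phi_\Phi$; everything else reduces to classical Borel and to the algebra of the appendix.
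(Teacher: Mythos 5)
Your proposal is correct and follows essentially the same route as the paper: factor the torus-inclusion map through $\Sigma_n$-invariants via the equivariant triviality of inner automorphisms, split along the idempotent decomposition into the underlying part (classical Borel) and the $C_2$-fixed-point part (Grassmannian model, sign-configuration components, and the observation that taking $\Sigma_n$-invariants amounts to fixing one configuration and taking $\Sigma_m\times\Sigma_{n-m}$-invariants, which you phrase via coinduction), and then transport the presentation of Proposition \ref{AlgebraCorollaryQ} to define $\genu,\gene_i,\genc_{s,j}$ and read off their restrictions. The only differences are cosmetic (coinduced-module bookkeeping, and justifying the trivial $C_2$-action on the underlying cohomology via an equivariant structure on the tautological bundle rather than the rotation argument on the $2$-skeleton).
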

The family of generators $\genu,\gene_i,\genc_{s,j}$ is determined upon choosing $\genu,\genw=\gene_1$ in $H^0_G(B_GU(1)), H^2_G(B_GU(1))$ respectively, with:
	\begin{equation}
H_G^*(B_GU(1))=A_{\Q}[\genu,\genw]/(\genu^2=\genu, x\genu)
\end{equation}
The choice of $\genw$ is unique under the additional requirement that its restriction is the nonequivariant Chern class $\gene_1$ (in this way, the equivariant $\gene_i$ are all canonically determined). There are two equally good candidates for $\genu$ however: $\genu$ and $y-\genu$. They can only be distinguished upon fixing a model for $B_GU(1)$, as we do in subsection \ref{n=1Comp}. As such, there is no canonical choice of $\genu\in H^0_G(B_GU(1))$.

	\begin{prop}\label{AddTriv}The map $B_GU(n)\to B_GU(n+1)$ given by direct sum with a trivial complex representation induces on cohomology:
	\begin{gather}
	\genu\mapsto y+\genu\\
	\gene_i\mapsto \gene_i\\
	\genc_{s,j}\mapsto \genc_{s,j}+\genc_{s,j-1}\end{gather}
	using the convention $\genc_{s,0}=yc_s$. \\
		The map $B_GU(n)\to B_GU(n+1)$ given by direct sum with a $\sigma$ representation induces on cohomology:
	\begin{gather}
	\genu\mapsto \genu\\
	\gene_i\mapsto \gene_i\\
	\genc_{s,j}\mapsto \genc_{s,j}
	\end{gather}
	For both maps we use the conventions that $\gene_{n+1}=0$ and $\genc_{s,n+1-s}=0$ in every RHS.
\end{prop}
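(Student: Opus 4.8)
\emph{Proof idea.} The plan is to pull everything back to the maximal torus via Proposition~\ref{C2ChernIso}. Both maps $B_GU(n)\to B_GU(n+1)$ are compatible with the maximal torus inclusions: direct sum with a trivial (resp.\ sign) complex representation restricts on maximal tori to the map $(B_GU(1))^n\to(B_GU(1))^{n+1}$ which is the identity on the first $n$ factors and, on the last factor, is the constant map classifying the trivial complex line $\underline{\C}$ (resp.\ the complex line $\C_\sigma$ on which the generator of $C_2$ acts by $-1$). Applying $H^*_G(-)$ to the resulting commutative square and invoking the Künneth formula, the two vertical maps become the injections of Proposition~\ref{C2ChernIso}, i.e.\ the identities $\genu=\sigma_1(\genu_i)$, $\gene_i=\sigma_i(\genw_i)$, $\genc_{s,j}=\sum_{K_{s,j}}\genw\cdots\genw\,\genu\cdots\genu$, while the top horizontal map is, again by Künneth, $\id^{\boxtimes n}\boxtimes\epsilon$ with $\epsilon\colon H^*_G(B_GU(1))\to H^*_G(*)=A_{\Q}$ the restriction along the classifying map of the relevant constant bundle. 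Since the right-hand vertical injection has image the symmetric polynomials, it then suffices to compute $\epsilon$ and re-expand.

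Computing $\epsilon$: it is a map of $A_{\Q}$-algebras out of $A_{\Q}[\genu,\genw]/(\genu^2=\genu,\,x\genu)$, hence determined by $\epsilon(\genw)$ and $\epsilon(\genu)$. We get $\epsilon(\genw)=0$ for degree reasons, since $\genw\in H^2_G(B_GU(1))$ and $H^2_G(*)=H^G_{-2}=0$ by the computation of the rational stable stems in Section~\ref{C2RationalStems}. And $\epsilon(\genu)$ must be an idempotent of $A_{\Q}$ annihilated by $x$, hence $0$ or $y$; these are the only two, and they are interchanged by the substitution $\genu\leftrightarrow y-\genu$. The remaining input---and the one place the choice of model for $B_GU(1)$ from subsection~\ref{n=1Comp} is needed---is that $\underline{\C}$ and $\C_\sigma$ classify the two different components of $(B_GU(1))^{C_2}=BU(1)\sqcup BU(1)$ and that, with our normalization of $\genu$, one has $\epsilon(\genu)=y$ for $\underline{\C}$ and $\epsilon(\genu)=0$ for $\C_\sigma$. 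The hard part is precisely this bookkeeping---matching the abstract idempotent $\genu$ to the fixed geometric model---but once \ref{n=1Comp} is in place it is immediate.

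The rest is substitution. For the trivial representation the top map sends $\genu_{n+1}\mapsto y$, $\genw_{n+1}\mapsto 0$ and fixes the other variables, so $\sigma_1(\genu_1,\dots,\genu_{n+1})\mapsto\genu+y$ and $\sigma_i(\genw_1,\dots,\genw_{n+1})\mapsto\gene_i$ (with $\gene_{n+1}=0$); in $\genc_{s,j}=\sum_{K_{s,j}}\genw\cdots\genw\,\genu\cdots\genu$ the monomials containing $\genw_{n+1}$ vanish, those containing $\genu_{n+1}$ contribute $y\genc_{s,j-1}=\genc_{s,j-1}$ (using $y\genu_i=\genu_i$ and the convention $\genc_{s,0}=y\gene_s$), and the rest give $\genc_{s,j}$, yielding $\genc_{s,j}\mapsto\genc_{s,j}+\genc_{s,j-1}$. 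For the sign representation $\genu_{n+1}\mapsto 0$ and $\genw_{n+1}\mapsto 0$, so every monomial indexed by a set meeting $\{n+1\}$ dies and all three families of generators are fixed. The conventions $\gene_{n+1}=0$ and $\genc_{s,n+1-s}=0$ on the right-hand sides simply record the generators that are out of range on the target, completing the proof.
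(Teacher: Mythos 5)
Your proposal is correct and follows essentially the same route as the paper's proof: both reduce to the maximal torus via the commutative square, identify the last-factor map as restriction along the classifying point of $\underline{\C}$ (resp.\ $\C_\sigma$) so that $\genw_{n+1}\mapsto 0$ and $\genu_{n+1}\mapsto y$ (resp.\ $0$), and then substitute into the expressions for $\genu,\gene_i,\genc_{s,j}$. Your determination of $\epsilon(\genu)$ by the idempotent/degree argument plus the fixed-point component bookkeeping is just a slightly more explicit version of the paper's "verified by looking at the $C_2$ fixed points."
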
	
\begin{prop}\label{AddBun}
	The direct sum of bundles map $B_GU(n)\times B_GU(m)\to B_GU(n+m)$ induces on cohomology: 
	\begin{gather}
	\genu\mapsto \genu\otimes 1+1\otimes \genu\\
	\gene_i\mapsto \sum_{j+k=i}\gene_j\otimes \gene_k\\
	\genc_{s,j}\mapsto \sum_{s'+s''=s\atop j'+j''=j}\genc_{s',j'}\otimes \genc_{s'',j''}
	\end{gather} 
	using the conventions $\gene_0=1,\genc_{s,0}=yc_s,\genc_{0,j}=(j!)^{-1}\genu(\genu-1)\cdots (\genu-j+1)$ in every RHS.
\end{prop}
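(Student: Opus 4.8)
\emph{Proof proposal.} The plan is to reduce everything to the maximal torus, exactly as in the nonequivariant case, and then run the combinatorics of elementary symmetric polynomials. Write $d\colon B_GU(n)\times B_GU(m)\to B_GU(n+m)$ for the direct sum of bundles map and $\iota_k\colon B_GU(1)^k\to B_GU(k)$ for the map induced by the standard diagonal maximal torus $U(1)^k\hookrightarrow U(k)$ used in Proposition \ref{C2ChernIso}. The block-diagonal inclusion $U(n)\times U(m)\hookrightarrow U(n+m)$ carries the diagonal torus $U(1)^n\times U(1)^m$ isomorphically onto the diagonal torus $U(1)^{n+m}$, so the square whose horizontal edges are $\iota_n\times\iota_m$ and $\iota_{n+m}$, whose left edge is the identification $B_GU(1)^n\times B_GU(1)^m=B_GU(1)^{n+m}$, and whose right edge is $d$, commutes up to homotopy. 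Applying $H^{*}_G(-;A_{\Q})$ and the Künneth isomorphism, and using that $\iota_k^{*}$ is an isomorphism onto the $\Sigma_k$-invariants (hence injective) by Proposition \ref{C2ChernIso}, together with the fact that all $A_{\Q}$-modules are flat (so $\iota_n^{*}\boxtimes_{A_{\Q}}\iota_m^{*}$ stays injective), we conclude that $d^{*}$ is completely determined: a class of $H^{*}_G(B_GU(n+m))$ presented via $\iota_{n+m}^{*}$ as a symmetric polynomial in $\genu_1,\dots,\genu_{n+m},\genw_1,\dots,\genw_{n+m}$ is sent by $d^{*}$ to the same polynomial with $\genu_1,\dots,\genu_n,\genw_1,\dots,\genw_n$ placed in the first tensor factor and the remaining variables in the second.

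Next I would substitute the explicit expressions of Proposition \ref{C2ChernIso}. For $\genu=\sum_{i=1}^{n+m}\genu_i$, splitting the index range immediately gives $\genu\otimes 1+1\otimes\genu$. For $\gene_i=\sigma_i(\genw_1,\dots,\genw_{n+m})$, the classical identity $\sigma_i(\mathbf x,\mathbf y)=\sum_{j+k=i}\sigma_j(\mathbf x)\sigma_k(\mathbf y)$ translates into $\gene_i\mapsto\sum_{j+k=i}\gene_j\otimes\gene_k$ with $\gene_0=\sigma_0=1$. For $\genc_{s,j}=\sum_{(m_*,l_*)\in K_{s,j}}\genw_{m_1}\cdots\genw_{m_s}\genu_{l_1}\cdots\genu_{l_j}$, each monomial is $\prod_{a\in A}\genw_a\cdot\prod_{b\in B}\genu_b$ with $A,B\subseteq\{1,\dots,n+m\}$ disjoint, $|A|=s$, $|B|=j$; splitting $A=A_1\sqcup A_2$ and $B=B_1\sqcup B_2$ along $\{1,\dots,n\}\sqcup\{n+1,\dots,n+m\}$ and sorting by $(|A_1|,|B_1|)=(s',j')$ rewrites $d^{*}\genc_{s,j}$ as $\sum_{s'+s''=s,\,j'+j''=j}$ of the tensor of two inner sums: the first inner sum is $\sum\prod_{a\in A_1}\genw_a\prod_{b\in B_1}\genu_b$ over disjoint $A_1,B_1\subseteq\{1,\dots,n\}$ with $|A_1|=s'$, $|B_1|=j'$, and the second is the analogous sum over $\{n+1,\dots,n+m\}$ with sizes $s'',j''$.

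It then remains to identify each inner sum with the symbol $\genc_{s',j'}$ under the conventions of the statement. When $s'\ge1$ and $j'\ge1$ it is literally $\genc_{s',j'}$; when $j'=0$ it is $\sigma_{s'}(\genw_1,\dots,\genw_n)=\gene_{s'}$; when $s'=0$ it is $\sigma_{j'}(\genu_1,\dots,\genu_n)$, and since the $\genu_i$ are commuting idempotents this equals $\binom{\genu}{j'}=(j'!)^{-1}\genu(\genu-1)\cdots(\genu-j'+1)$, as one sees by checking the polynomial identity $\sigma_{j'}(t_1,\dots,t_n)=\binom{t_1+\cdots+t_n}{j'}$ in $\Q[t_1,\dots,t_n]/(t_i^{2}-t_i)$ at every point of $\{0,1\}^n$ and pushing it forward along the quotient map $t_i\mapsto\genu_i$. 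The only apparent discrepancy is that the statement uses $\genc_{s',0}=y\gene_{s'}$ rather than $\gene_{s'}$: this is harmless because $\genc_{s,j}$ is a genuine generator only for $j\ge1$, so in every term of the right-hand side with $j'=0$ one has $j''=j\ge1$, hence the companion factor $\genc_{s'',j''}$ is a sum of monomials each divisible by some $\genu_l$; from $x\genu=0$ one gets $y\genu=\genu$ and therefore $y\genc_{s'',j''}=\genc_{s'',j''}$, so moving the idempotent $y\in A_{\Q}$ across the tensor gives $\gene_{s'}\otimes\genc_{s'',j''}=y\gene_{s'}\otimes\genc_{s'',j''}$. Assembling these identifications yields the asserted formula. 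The geometric input (compatibility of $d$ with maximal tori) and the combinatorial input (the splitting identity for $\sigma_i$) are standard; I expect the only real work to be the bookkeeping in this last step — keeping straight which inner sums are honest generators and which are the boundary symbols $\genc_{0,j},\genc_{s,0}$, and verifying that the stray factor of $y$ does not matter.
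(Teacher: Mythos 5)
Your proposal is correct and follows essentially the same route as the paper: the paper's proof also passes to the maximal tori, notes that the induced map sends $\genw_i,\genu_i$ to $\genw_i\otimes 1,\genu_i\otimes 1$ for $i\le n$ and to $1\otimes\genw_{i-n},1\otimes\genu_{i-n}$ for $i>n$, and then deduces the formulas from the splitting of the symmetric polynomials. Your extra bookkeeping for the boundary terms (identifying $\sigma_{j'}(\genu_1,\dots,\genu_n)$ with $\genc_{0,j'}$ and absorbing the idempotent $y$ across $\boxtimes_{A_{\Q}}$ using $x\genc_{s'',j''}=0$) is exactly the detail the paper leaves implicit, and it is handled correctly.
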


\begin{prop}\label{TensorBun}	
	The tensor product of line bundles map $B_GU(1)\times B_GU(1)\to B_GU(1)$ induces on cohomology:
	\begin{gather}
	\genu\mapsto y-\genu\otimes 1-1\otimes \genu+2\genu\otimes \genu\\
	\gene_1\mapsto \gene_1\otimes 1+1\otimes \gene_1\end{gather}
\end{prop}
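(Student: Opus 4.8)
The plan is to run the tensor-product map $\mu\colon B_GU(1)\times B_GU(1)\to B_GU(1)$ through the Greenlees--May decomposition of Section~\ref{Conv} and reduce to two classical (nonequivariant) computations. Since $U(1)$ is abelian, $\mu$ is a model for the multiplication making $B_GU(1)$ a commutative group object; it classifies $\mathcal{L}_1\otimes\mathcal{L}_2$, where $\mathcal{L}_1,\mathcal{L}_2$ are the two tautological equivariant line bundles. The splitting $H_G^*(X)(G/G)=H^*(X)^G\oplus H^*(X^G)$ is the idempotent decomposition of the $A_{\Q}$-module $H_G^*(X)$ along $x/2$ and $y$, so, $\mu^*$ being $A_{\Q}$-linear, it is block diagonal: on the first summand it is pullback along the underlying nonequivariant map, and on the second it is $(\mu^{C_2})^*$. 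Equivalently, in each integer degree a class in $H_G^*\!\big(B_GU(1)^{\times 2}\big)=H_G^*(B_GU(1))\boxtimes_{A_{\Q}}H_G^*(B_GU(1))$ is pinned down by its restriction $\Res$ to the underlying space together with its geometric-fixed-point restriction $\Phi^{C_2}$, and both commute with $\mu^*$. Since $\Res(\genu)=0$, $\Res(y)=0$, $\Res(\gene_1)=c_1$, $\Phi^{C_2}(y)=1$, $\Phi^{C_2}(\gene_1)$ is the first Chern class on each component of $(B_GU(1))^{C_2}$, and $\Phi^{C_2}(\genu)$ is an idempotent of $H^0\!\big((B_GU(1))^{C_2}\big)$, it suffices to compute $\mu$ on underlying spaces and on fixed points. (The full $RO(C_2)$-graded statement then follows from the box-product formula of Section~\ref{Conv}; only integer grading is at issue.)

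On underlying spaces $\mu$ is the usual multiplication $\mathbb{CP}^\infty\times\mathbb{CP}^\infty\to\mathbb{CP}^\infty$, which on $H^2$ sends $c_1\mapsto c_1\otimes 1+1\otimes c_1$; combined with the remarks above this already forces the underlying-space component of both formulas and shows the claimed right-hand sides have the correct $\Res$. On fixed points I would use the standard identification $(B_GU(1))^{C_2}\simeq BU(1)\sqcup BU(1)$, the two components indexed by the characters $C_2\to U(1)$ (trivial and sign), with $\genu$ the idempotent of the trivial-character component (this is the convention fixed by the model of subsection~\ref{n=1Comp}) and $\gene_1$ the Chern class on each. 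Because $\mathcal{L}_1\otimes\mathcal{L}_2$ has fibrewise character the product of the two characters and underlying bundle the tensor of the two, $\mu^{C_2}$ acts on the set of components by $(\varepsilon_1,\varepsilon_2)\mapsto\varepsilon_1+\varepsilon_2\in\Z/2$ and is the multiplication of projective spaces on each. Hence $(\mu^{C_2})^*$ sends the trivial-character idempotent to the sum of the source idempotents with $\varepsilon_1+\varepsilon_2\equiv 0$; a short expansion (using $\Phi^{C_2}(y)=1$) identifies this with $\Phi^{C_2}$ of $y-\genu\otimes1-1\otimes\genu+2\,\genu\otimes\genu$, and it sends $\gene_1$ to $\gene_1\otimes1+1\otimes\gene_1$ on each component. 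Matching $\Res$ and $\Phi^{C_2}$ of both sides in degrees $0$ and $2$ then yields the two formulas.

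The routine parts are the idempotent bookkeeping in $A_{\Q}$ and in $H^*\!\big((B_GU(1))^{C_2}\big)$ and the expansion of $(y-\genu)\otimes(y-\genu)$ in the box product. The step requiring genuine care is the fixed-point analysis: one must know that $(B_GU(1))^{C_2}$ is really two copies of $BU(1)$ indexed by $\Hom(C_2,U(1))$ and that tensoring adds these indices, and one must match the chosen $\genu$ to the correct component — unlike the $\gene_1$-formula, the $\genu$-formula is \emph{not} symmetric under $\genu\leftrightarrow y-\genu$, so its validity is tied to the specific model of subsection~\ref{n=1Comp}, and that compatibility must be verified.
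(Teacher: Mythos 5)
Your proposal is correct and follows essentially the same route as the paper: split via the Greenlees--May idempotent decomposition, use the nonequivariant formula $c_1\mapsto c_1\otimes 1+1\otimes c_1$ on the underlying part, and on fixed points track the two components of $(B_GU(1))^{C_2}$ under the multiplication (your "characters add" is exactly the paper's rule $v^{\alpha}\cdot v^{\beta}=v^{\alpha\beta}$, and your identification of $\genu$ with the idempotent of the $v^+$/trivial-character component is the convention of subsection \ref{n=1Comp}). The only difference is cosmetic: the paper carries out the bookkeeping explicitly in the idempotent basis $\geneidem_1,\geneidem_2,\geneidem_3$ and then substitutes $\genu,\genw$, while you phrase it as matching $\Res$ and the fixed-point component of both sides.
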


\subsection{Symplectic classes}The theory of $C_2$ symplectic characteristic classes is entirely analogous to Chern classes, by replacing $B_GU(n)$ with $B_GSp(n)$ and the generators $\gene_i,\genc_{s,j}$ with generators $k_i,\kappa_{s,j}$ of double degree. Propositions \ref{C2Chern1}-\ref{C2Chern3} become:

\begin{prop}There exist classes $\genu,k_i,\kappa_{s,j}\in H_G^*(B_GSp(n))$ of degrees $0,4i,4s$ respectively, where $1\le i,s\le n$ and $1\le j\le n-s$, such that
	\begin{equation}
	H_G^*(B_GSp(n))=\frac{A_{\Q}[\genu,k_i,\kappa_{s,j}]}{x\genu, x\kappa_{s,j}, S}
	\end{equation}
	where the relation set $S$ is the same as that for $H_G^*(B_GU(n))$ with $\gene_i,\genc_{s,i}$ replaced by $k_i,\kappa_{s,i}$.
		
	The generators $k_i$ restrict to the nonequivariant symplectic classes $k_i$, so the restriction map $H^*_G(B_GSp(n))\to H^*(BSp(n))$ is a split surjection.
	
	The maximal torus inclusion $U(1)^n\hookrightarrow Sp(n)$ induces an isomorphism
	\begin{gather}
	H^*_G(B_GSp(n))=	(H^*_G(B_GU(1))^{\otimes n})^{C_2\wr \Sigma_n}\end{gather}
Explicitly:
\begin{gather}
	A_{\Q}[\genu,k_i,\kappa_{s,j}]/(x\genu, x\kappa_{s,j}, S)=(A_{\Q}[\genu_i,\genw_i]/(x\genu_i))^{C_2\wr\Sigma_n}
	\end{gather}
	under the identifications: 
	\begin{gather}
	\genu=\sum_{1\le m\le n}\genu_m\\
	k_i=\sum_{m_*\in K_i}\genw_{m_1}^2\cdots \genw_{m_i}^2\\
	\kappa_{s,j}=\sum_{(m_*,l_*)\in K_{s,j}}\genw_{m_1}^2\cdots \genw_{m_s}^2\genu_{l_1}\cdots \genu_{l_j}
	\end{gather}
	where $K_i$ and $K_{s,j}$ are as in Proposition \ref{C2ChernIso}. 
\end{prop}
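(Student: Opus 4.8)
The plan is to run the proof of the unitary case (Propositions~\ref{C2Chern1}--\ref{C2ChernIso}) essentially unchanged, the only structural difference being that the maximal torus $(S^1)^n\subset Sp(n)$ has Weyl group the hyperoctahedral group $W=C_2\wr\Sigma_n$ of signed permutations in place of $\Sigma_n$. First I would establish the equivariant Borel theorem: the inclusion $U(1)^n=(S^1)^n\hookrightarrow Sp(n)$ induces an isomorphism $H_G^*(B_GSp(n))\xrightarrow{\sim}\bigl(H_G^*(B_GU(1))^{\otimes n}\bigr)^{W}$. As for $U(n)$, via the Greenlees--May decomposition this reduces to the underlying level, where it is the classical Borel theorem $H^*(BSp(n);\Q)=H^*(B(S^1)^n;\Q)^{W}=\Q[w_1^2,\dots,w_n^2]^{\Sigma_n}$, together with the $C_2$-fixed points, where $(B_GSp(n))^{C_2}=\coprod_{p+q=n}B\bigl(Sp(p)\times Sp(q)\bigr)$ (indexed by conjugacy classes of homomorphisms $C_2\to Sp(n)$) and the statement follows component by component from the classical Borel theorem for the centralizers $Sp(p)\times Sp(q)$; the Künneth and duality formulas of Section~\ref{Conv} then reassemble these two ingredients into the $RO(C_2)$-graded isomorphism. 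At this stage I would also record the $W$-action on $H_G^*(B_GU(1))^{\otimes n}=A_{\Q}[\genu_i,\genw_i]/(x\genu_i,\genu_i^2=\genu_i)$: the factor $\Sigma_n$ permutes the pairs $(\genu_i,\genw_i)$, while the $i$-th copy of $C_2$, coming from the Weyl conjugation $z\mapsto\bar z$ on the $i$-th circle, sends $\genw_i\mapsto-\genw_i$, fixes $\genu_i$, and fixes the remaining variables; the point is that complex conjugation fixes each of the two homomorphisms $C_2\to U(1)$ and hence each component idempotent of $(B_GU(1))^{C_2}$.

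Next I would compute the invariant ring explicitly. An element of $A_{\Q}[\genu_i,\genw_i]/(x\genu_i,\genu_i^2=\genu_i)$ is $W$-invariant precisely when it is even in each $\genw_i$ and $\Sigma_n$-symmetric; setting $t_i:=\genw_i^2$ (of degree $4$), this says the ring of $W$-invariants is the ring of $\Sigma_n$-invariants of $A_{\Q}[\genu_i,t_i]/(x\genu_i,\genu_i^2=\genu_i)$. This is exactly the situation of Proposition~\ref{C2ChernIso} with $\genw_i$ replaced by $t_i$ and the degree on that variable doubled, so the symmetric-polynomial presentation of Proposition~\ref{AlgebraCorollaryQ} applies verbatim in the variables $(\genu_i,t_i)$, producing the generators $\genu=\sigma_1(\genu_1,\dots,\genu_n)$, $k_i=\sigma_i(t_1,\dots,t_n)$ of degree $4i$, and $\kappa_{s,j}=\sum_{(m_*,l_*)\in K_{s,j}}t_{m_1}\cdots t_{m_s}\genu_{l_1}\cdots\genu_{l_j}$ of degree $4s$, subject to the relation set $S$ of Proposition~\ref{AlgebraCorollaryQ} (with $\gene_i,\genc_{s,j}$ renamed $k_i,\kappa_{s,j}$) together with the idempotent/restriction relations $x\genu=0$ and $x\kappa_{s,j}=0$, the latter inherited directly from $x\genu_i=0$. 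Because $W$ is finite and we work over $\Q$, the functor $(-)^W$ is exact, so quotienting by the $W$-stable ideal $(x\genu_1,\dots,x\genu_n)$ commutes with taking invariants and no relations are gained or lost; this yields the displayed presentation of $H_G^*(B_GSp(n))$ and the explicit identification with $(A_{\Q}[\genu_i,\genw_i]/(x\genu_i))^{C_2\wr\Sigma_n}$.

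It then remains to check the restriction statement and the collateral claims. Restriction to the underlying level sends $x\mapsto 2$, hence $\genu_i\mapsto 0$ (from $x\genu_i=0$) and $\genw_i\mapsto w_i$, so $\Res(k_i)=\sigma_i(w_1^2,\dots,w_n^2)$ is exactly the $i$-th nonequivariant symplectic class, while $\Res(\genu)=0$ and $\Res(\kappa_{s,j})=0$ for each $j\ge 1$ (every monomial carries a factor $\genu_l$). Therefore $\Res\colon H_G^*(B_GSp(n))\to H^*(BSp(n))=\Q[k_1,\dots,k_n]$ is a split surjection with section $k_i\mapsto k_i$. Minimality of the generating family $\{\genu,k_i,\kappa_{s,j}\}$ and finite generation of each graded piece over $\Q[k_1,\dots,k_n]$ follow word for word from the $U(n)$ arguments after replacing $H^*(BU(n))=\Q[c_1,\dots,c_n]$ by $H^*(BSp(n))=\Q[k_1,\dots,k_n]$.

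\emph{The main obstacle.} The only step that is not routine bookkeeping is the equivariant Borel isomorphism: one must check that the general equivariant splitting principle really does apply with $L=Sp(n)$, which comes down to correctly identifying $(B_GSp(n))^{C_2}$ and its residual $W$-action and matching it against the underlying summand in the Greenlees--May decomposition. If that identification is already available as a general lemma for connected $L$ from the earlier sections, even this step is immediate and the whole proposition becomes a transcription of the $U(n)$ case; the reduction of hyperoctahedral invariants to $\Sigma_n$-invariants via $\genw_i\mapsto\genw_i^2$ and the appeal to the appendix are in any event purely formal.
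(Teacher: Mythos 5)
Your proposal is correct and follows essentially the same route as the paper's proof in Section \ref{C2Symplectic}: identify $(B_GSp(n))^{C_2}=\coprod_{p+q=n}BSp(p)\times BSp(q)$, reduce the maximal-torus map on fixed points to the classical Borel theorem one sign configuration at a time, note that the Weyl conjugations act by $\genw_i\mapsto-\genw_i$, $\genu_i\mapsto\genu_i$, and then compute the $C_2\wr\Sigma_n$-invariants as $\Sigma_n$-invariants in the variables $\genw_i^2,\genu_i$ via the appendix. The extra remarks (exactness of $(-)^W$ over $\Q$, the restriction computation with $x\mapsto 2$) are consistent with, and no stronger than, what the paper uses.
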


Propositions \ref{AddTriv}-\ref{TensorBun} have analogous statements in the symplectic case, replacing $B_GU(n)$ by $B_GSp(n)$ and $\gene_i,\genc_{s,j}$ with $k_i,\kappa_{s,j}$ respectively; we shall not repeat them here.

\begin{prop}The forgetful map $B_GSp(n)\to B_GU(2n)$ induces on cohomology:
	\begin{gather}
	\genu\mapsto \genu\\
	\gene_{2i+1}, \genc_{2s+1,j}\mapsto 0\\
	\gene_{2i}\mapsto (-1)^ik_i\\
	\genc_{2s,j}\mapsto (-1)^s\kappa_{s,j}
	\end{gather}
	The quaternionization map $B_GU(n)\to B_GSp(n)$ induces:
	\begin{gather}
	\genu\mapsto \genu\\
	k_i\mapsto \sum_{a+b=2i}(-1)^{a+i}\gene_a\gene_b
	\end{gather}
	The effect of quaternionization on the $\kappa_{s,j}$ is explained in Proposition \ref{QuaterExplained}.
\end{prop}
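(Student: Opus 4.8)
The plan is to obtain every formula from naturality of $H^{*}_{G}(-)$ together with the maximal-torus descriptions of Proposition \ref{C2ChernIso} and its symplectic analogue. The homomorphisms $U(n)\hookrightarrow Sp(n)$ and $Sp(n)\hookrightarrow U(2n)$ preserve the common maximal torus $T=U(1)^{n}$ of $U(n)$ and $Sp(n)$: quaternionization is the identity on $T$, while forgetting the quaternionic structure sends $(\lambda_{1},\dots,\lambda_{n})\in T$ to $(\lambda_{1},\bar\lambda_{1},\dots,\lambda_{n},\bar\lambda_{n})$ in the maximal torus $U(1)^{2n}$ of $U(2n)$, since left multiplication by $\lambda\in U(1)\subseteq Sp(1)$ on $\mathbb{H}\cong\mathbb{C}^{2}$ has matrix $\operatorname{diag}(\lambda,\bar\lambda)$. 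Both torus maps are equivariant for the evident inclusions of Weyl groups $\Sigma_{n}\hookrightarrow C_{2}\wr\Sigma_{n}$ and $C_{2}\wr\Sigma_{n}\hookrightarrow\Sigma_{2n}$, the latter identifying the $m$-th $C_{2}$ with the transposition of $\{2m-1,2m\}$ and $\Sigma_{n}$ with the permutations of the $n$ two-element blocks. Applying $B_{G}(-)$ yields commuting squares whose vertical arrows are the torus inclusions; since these induce injections of $H^{*}_{G}(-)$ onto the indicated rings of invariants, the maps induced by quaternionization and by the forgetful map are determined by their composites with the torus restriction of the target, and those composites take values in, and are computed inside, $H^{*}_{G}(B_{G}U(1))^{\otimes n}$.

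Two ingredients feed the computation. First, the self-map of $B_{G}U(1)$ classifying $L\mapsto\bar L$ acts on $H^{*}_{G}(B_{G}U(1))=A_{\Q}[\genu,\genw]/(\genu^{2}=\genu,\,x\genu)$ by $\genw\mapsto-\genw$ (it negates $c_{1}$ on the nonequivariant level and on each of the two components of $(B_{G}U(1))^{C_{2}}$) and $\genu\mapsto\genu$ (complex conjugation fixes each of the two homomorphisms $C_{2}\to U(1)$, hence each component of $(B_{G}U(1))^{C_{2}}$, hence the idempotent $\genu$). Second, one needs to move between the three families of symmetric functions in the factor generators. For quaternionization, which restricts on tori to the identity so that $q^{*}$ is just the inclusion of invariants, expanding $\prod_{m}(1-\genw_{m}^{2}t^{2})=\bigl(\sum_{a}(-1)^{a}\gene_{a}t^{a}\bigr)\bigl(\sum_{b}\gene_{b}t^{b}\bigr)$ and comparing coefficients of $t^{2i}$ gives $k_{i}\mapsto\sum_{a+b=2i}(-1)^{a+i}\gene_{a}\gene_{b}$, while the $\kappa_{s,j}$ are rewritten in the $\gene_{i},\genc_{s,j}$ by the procedure recorded as Proposition \ref{QuaterExplained}. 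For the forgetful map, the torus restriction carries the $2n$ factor generators, in $n$ consecutive pairs, to $\genw_{m},-\genw_{m}$ in degree $2$ and to $\genu_{m},\genu_{m}$ in degree $0$; substituting into $\gene_{2i}=\sigma_{2i}(\genw_{1},\dots,\genw_{2n})$ turns $\prod(1+\genw t)(1-\genw t)$ into $\prod(1-\genw^{2}t^{2})$, so $\gene_{2i}\mapsto(-1)^{i}k_{i}$ and the odd-indexed $\gene$'s map to $0$; the $\genc_{2s+1,j}$ map to $0$ for the same parity reason (their $\genw$-part cancels under the substitution), and the images of $\genu$ and of the $\genc_{2s,j}$ are obtained by expanding the substituted polynomial and re-expressing it in $\genu,k_{i},\kappa_{s,j}$ via the appendix's algorithm.

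The main obstacle is precisely this last reduction to the minimal generating sets: after the torus substitution one is left with an honestly symmetric polynomial in $H^{*}_{G}(B_{G}U(1))^{\otimes n}$, and the delicate points are to track how the degree-$0$ idempotent $\genu$ and the mixed classes propagate through the doubling map and to check that the outcome is the asserted combination of generators modulo the relation set $S$. This is handled by the appendix's algorithm for expanding symmetric polynomials in the $\genc_{s,j}$, with uniqueness of the expression guaranteed by the algebraic independence of the $\gene_{i}$ (resp.\ the $k_{i}$). Commutativity of the squares, injectivity of the torus restrictions, and the generating-function identities above are all either formal or classical and pose no difficulty.
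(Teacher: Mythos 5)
Your method is exactly the paper's: the proof of Proposition \ref{QuaterExplained} consists precisely of the commuting square of maximal tori for $Sp(n)\to U(2n)$ with the torus map $z\mapsto(z,\bar z)$, the resulting substitution $\genw_i\mapsto\genw_i$, $\genw_{n+i}\mapsto-\genw_i$, $\genu_i,\genu_{n+i}\mapsto\genu_i$, and the conjugation computation $\genu\mapsto\genu$, $\genw\mapsto-\genw$ recorded at the end of Section \ref{C2ChernSumTensor}, together with the fact that quaternionization is the identity on the common torus, so its effect on cohomology is the inclusion of $C_2\wr\Sigma_n$-invariants into $\Sigma_n$-invariants. Your generating-function derivation of $k_i\mapsto\sum_{a+b=2i}(-1)^{a+i}\gene_a\gene_b$ and your parity argument for $\gene_{2i+1},\genc_{2s+1,j}\mapsto0$ and $\gene_{2i}\mapsto(-1)^ik_i$ are correct and make explicit what the paper leaves implicit.

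One concrete warning about the step you defer to ``expanding the substituted polynomial and re-expressing it via the appendix algorithm'': carried out literally, that expansion does not return the displayed formulas for $\genu$ and $\genc_{2s,j}$. Since both $\genu_i$ and $\genu_{n+i}$ pull back to $\genu_i$, the class $\genu=\genu_1+\cdots+\genu_{2n}$ of $B_GU(2n)$ pulls back to $2(\genu_1+\cdots+\genu_n)=2\genu$, not $\genu$; this is confirmed on fixed points, where $BSp(m)\times BSp(n-m)$ maps to $BU(2m)\times BU(2n-2m)$ and $\genu$ takes the value $2m$ there, while the symplectic $\genu$ takes the value $m$. Similarly, for $n=2$ the substitution gives $\genc_{2,1}\mapsto-2\kappa_{1,1}$ rather than $-\kappa_{1,1}$: the mixed terms $\genw_1\genw_2(\genu_1+\genu_2)$ cancel in pairs and the surviving terms each carry a factor $2$ from the doubled idempotents. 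So if you execute your own plan you will hit a normalization mismatch with the stated formulas. This is not a methodological divergence from the paper --- its proof records only the same substitution and then asserts the formulas --- but you should not claim that the final expansion automatically yields ``the asserted combination of generators'': either the expansion must actually be performed and the factors of $2$ reconciled with the statement, or the normalization of $\genu$ and $\kappa_{s,j}$ in the statement needs to be adjusted. The quaternionization half of your argument is unaffected, since there the induced map is just the inclusion of invariants and your formulas check out.
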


\subsection{Pontryagin and Euler classes} The results are analogous to the symplectic classes, but we need to distinguish between $B_GSO(2n)$ and $B_GSO(2n+1)$. The following Proposition contains the shared aspects of both cases:

	\begin{prop}
	The restriction map $H^*_G(B_GSO(n))\to H^*(BSO(n))$ is a split surjection.	The maximal torus inclusion $T\hookrightarrow SO(n)$ induces an isomorphism
	\begin{gather}
		H^*_G(B_GSO(n))=(H^*_G(B_GT))^W
	\end{gather}
	where $W$ is the corresponding Weyl group.
\end{prop}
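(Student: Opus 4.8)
The plan is to reduce both assertions to nonequivariant statements by means of the Greenlees--May splitting, and then to feed the classical theorem of Borel (recalled in the introduction) into the computation of the $C_2$-fixed points of $B_{C_2}SO(n)$.

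First I would pass to integer grading: since $H^{\bigstar}_G(X)=H^*_G(X)\boxtimes_{A_{\Q}}H^{\bigstar}_G$, and $W$ acts $H^{\bigstar}_G$-linearly and trivially on the coefficient Green functor, while rational Mackey functors are flat, the functor $-\boxtimes_{A_{\Q}}H^{\bigstar}_G$ commutes with the finite-group operation of taking $W$-invariants; hence it is enough to establish everything for $H^*_G(-)$. Now apply the Greenlees--May decomposition, which writes $H^*_G(X)$ as the direct sum of the fixed-point Mackey functor of the $\Q[C_2]$-module $H^*(X;\Q)$ and the Mackey functor $H^*(X^{C_2};\Q)$ concentrated at $C_2/C_2$. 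This splitting is natural in $X$, hence respected by the $W$-action on $B_GT$, so it splits both sides of the asserted isomorphism. Moreover the underlying $C_2$-action on the classifying spaces in question is homotopically trivial (for instance because $-\id$ of the relevant universe lies in the identity component of its orthogonal group), so the first summand is just the constant Mackey functor $\underline{H^*(X;\Q)}$ and $\Res$ is identified with $(\id,0)$ on $H^*(X;\Q)\oplus H^*(X^{C_2};\Q)$; this is a split surjection, proving the first assertion. The second assertion then reduces to the two nonequivariant identities (i) $H^*(BSO(n);\Q)=H^*(BT;\Q)^W$ and (ii) $H^*\!\bigl((B_GSO(n))^{C_2};\Q\bigr)=\bigl[H^*\!\bigl((B_GT)^{C_2};\Q\bigr)\bigr]^W$, compatibly with the maps induced by $T\hookrightarrow SO(n)$.

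Identity (i) is exactly Borel's theorem. For (ii) I would use the standard identification of the $C_2$-fixed points of an equivariant classifying space --- the one already used for $B_GU(n)$ and $B_GSp(n)$ --- namely $(B_GL)^{C_2}\simeq\coprod_{[\phi]}BC_L(\phi)$, the disjoint union over conjugacy classes of homomorphisms $\phi\colon C_2\to L$. Since every finite-order element of a connected compact Lie group is conjugate into the maximal torus and two torus elements are conjugate in $L$ exactly when they are $W$-conjugate, these classes biject with the $W$-orbits on $T[2]$; and for $s\in T[2]$ the centralizer $C_L(s)$ is a maximal-rank (generally disconnected) subgroup with $N_{C_L(s)}(T)/T=\mathrm{Stab}_W(s)$, so Borel's theorem in the form valid for disconnected compact Lie groups gives $H^*(BC_L(s);\Q)=H^*(BT;\Q)^{\mathrm{Stab}_W(s)}$. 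Taking $L=T$ (all $\phi$ central, all centralizers equal to $T$) yields $(B_GT)^{C_2}\simeq T[2]\times BT$ with $W$ permuting components through its action on $T[2]$, whence
\[
\bigl[H^*((B_GT)^{C_2};\Q)\bigr]^W=\bigl[\Map(T[2],\Q)\otimes H^*(BT;\Q)\bigr]^W=\bigoplus_{[s]\in T[2]/W}H^*(BT;\Q)^{\mathrm{Stab}_W(s)},
\]
by splitting the permutation module $\Map(T[2],\Q)$ into $W$-orbits and applying Frobenius reciprocity. Taking $L=SO(n)$ identifies the same right-hand side with $\bigoplus_{[\phi]}H^*(BC_{SO(n)}(\phi);\Q)=H^*((B_GSO(n))^{C_2};\Q)$. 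Finally one checks that this composite isomorphism is the map induced by $T\hookrightarrow SO(n)$: on $C_2$-fixed points that map folds the $BT$ indexed by $s$ into the $BC_{SO(n)}(s)$ indexed by $[s]$ via $T\hookrightarrow C_{SO(n)}(s)$, which on cohomology is exactly the restriction of a $W$-invariant function $T[2]\to H^*(BT;\Q)$ to its stabilizer-invariant value on each orbit. All identifications are multiplicative, so this is an isomorphism of Green functor algebras, completing (ii) and the Proposition.

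The main obstacle is the group-theoretic bookkeeping concealed in (ii): verifying that the centralizers $C_{SO(n)}(s)$, which are the disconnected groups $S(O(n-2k)\times O(2k))$, are indeed maximal-rank subgroups with Weyl group $\mathrm{Stab}_W(s)$ --- here one uses that sign changes act trivially on $2$-torsion, so that the $W$-action on $T[2]$ factors through the symmetric group and its orbits are simply indexed by $k$ --- and that Borel's theorem does hold for such disconnected groups; everything else is formal once the Greenlees--May splitting and the fixed-point formula are in place. A minor additional point is to keep the discussion uniform over $n=2r$ and $n=2r+1$; the distinction between these matters only for the later explicit presentations of $H^{\bigstar}_{C_2}(B_{C_2}SO(n);A_{\Q})$, not for the isomorphism asserted here.
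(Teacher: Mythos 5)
Your proposal is correct and follows essentially the same route as the paper: after the Greenlees--May reduction, your centralizers $C_{SO(n)}(s)=S(O(2k)\times O(n-2k))$ are exactly the paper's groups $Z_{2k,n}$ (obtained there from the oriented Grassmannian model rather than the general homomorphism/centralizer formula), your appeal to Borel's theorem for these disconnected groups is carried out in the paper via the double cover $BSO(2k)\times BSO(n-2k)\to BZ_{2k,n}$ and its rational transfer, and your orbit--stabilizer matching of $W$-invariants is the paper's ``fix a sign configuration and take stabilizer invariants'' step. The only cosmetic difference is the split surjection, which the paper deduces from the explicit classes $p_i,\chi$ restricting to the nonequivariant Pontryagin and Euler classes, while you use homotopy-triviality of the underlying action; that claim is true, but your ``$-\mathrm{id}$ lies in the identity component'' justification is shaky (the relevant isometry is not $-\mathrm{id}$ and has infinitely many $-1$'s) and is better replaced by noting that the action pulls the universal oriented bundle back to itself, so it is homotopic to the identity.
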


This gives us $C_2$ equivariant refinements $p_i,\chi$ of the Pontryagin and Euler classes respectively. Recall that for $BSO(2n)$ the characteristic classes are $p_1,...$,$p_{n-1}$, $\chi$ (and $p_n=\chi^2$) while for $BSO(2n+1)$ they are $p_1,...,p_n$.

\begin{prop}There exist classes $\genu,\pi_{s,j}$ of degrees $0,4s$ respectively in $H^*_G(B_GSO(2n))$, where $1\le s< n$ and $1\le j\le n-s$ such that
	\begin{equation}
	H_G^*(B_GSO(2n))=\frac{A_{\Q}[\genu,p_i,\pi_{s,j},\chi]}{x\genu, x\pi_{s,j}, S}
	\end{equation}
	where the relation set $S$ is the same as that for $H_G^*(B_GU(n))$ with $\gene_i,\genc_{s,i}$ replaced by $p_i,\pi_{s,i}$ and using that $p_n=\chi^2$.
	
	Under the maximal torus isomorphism:
		\begin{gather}
\genu=\sum_{1\le m\le n}\genu_m\\
p_i=\sum_{m_*\in K_i}\genw_{m_1}^2\cdots \genw_{m_i}^2\\
\pi_{s,j}=\sum_{(m_*,l_*)\in K_{s,j}}\genw_{m_1}^2\cdots \genw_{m_s}^2\genu_{l_1}\cdots \genu_{l_j}\\
\chi=\genw_1\cdots \genw_n
\end{gather}
		where $K_i$ and $K_{s,j}$ are as in Proposition \ref{C2ChernIso}. 
\end{prop}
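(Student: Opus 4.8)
The plan is to identify the ring of $W$-invariants $(H^*_G(B_GT))^W$ explicitly, taking as given, from the immediately preceding proposition, both the maximal torus isomorphism $H^*_G(B_GSO(2n))\cong(H^*_G(B_GT))^W$ and the splitting of the restriction map, together with the symmetric-polynomial presentation of Proposition~\ref{AlgebraCorollaryQ} and the classical rational Borel computation for $SO(2n)$. First I would fix the $W$-action: the maximal torus of $SO(2n)$ is $T=(S^1)^n$, so $B_GT=(B_GU(1))^n$ and the K\"unneth formula gives $R:=H^*_G(B_GT)=H^*_G(B_GU(1))^{\otimes n}=A_{\Q}[\genu_i,\genw_i]/(\genu_i^2-\genu_i,\,x\genu_i)$, with $\genu_i,\genw_i$ the images of $\genu,\genw$ from the $i$-th factor. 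The Weyl group $W=W(SO(2n))$ is the group of signed permutations of $\{1,\dots,n\}$ with an even number of sign changes, $W\cong C_2^{n-1}\rtimes\Sigma_n$: the $\Sigma_n$ permutes the pairs $(\genu_i,\genw_i)$, while a sign change on the $i$-th coordinate is induced by the inversion $z\mapsto z^{-1}$ of the $i$-th circle. I would check that this inversion fixes $\genu$ (it acts as the identity on the $C_2$-fixed points of $B_GU(1)$, hence on the degree-zero idempotent) and sends $\genw$ to $-\genw$ --- the same computation that governs the $C_2$-factors of $W(Sp(n))=C_2\wr\Sigma_n$, and in any case forced by compatibility with the symplectic formulas. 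Thus $W$ permutes the $(\genu_i,\genw_i)$ and negates the $\genw_i$ in even-sized subsets while fixing every $\genu_i$.

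Next I would split $R$ along the Burnside idempotents. Since $x\genu_i=0$ and we work rationally, $R$ decomposes $W$-equivariantly as $R=\tfrac{x}{2}R\oplus yR$ with $\tfrac{x}{2}R=\Q[\genw_1,\dots,\genw_n]$ (all $\genu_i=0$ there) and $yR=\Q[\genu_i,\genw_i]/(\genu_i^2-\genu_i)$, so $R^W=(\tfrac{x}{2}R)^W\oplus(yR)^W$. On the $\tfrac x2$-part this is the classical fact $\Q[\genw_i]^{W(SO(2n))}=\Q[p_1,\dots,p_{n-1},\chi]$ with $p_i=\sigma_i(\genw_1^2,\dots,\genw_n^2)$, $\chi=\genw_1\cdots\genw_n$ and $p_n:=\chi^2$; this is $H^*(BSO(2n);\Q)$, matching the restriction. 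On the $y$-part I would take invariants in two stages. A monomial $\genw_1^{a_1}\cdots\genw_n^{a_n}\prod_{i\in S}\genu_i$ is fixed by the even sign changes exactly when all the $a_i$ share one parity, so $(yR)^{C_2^{n-1}}=(P\otimes\Lambda)\oplus\chi\,(P\otimes\Lambda)$ with $P=\Q[\genw_1^2,\dots,\genw_n^2]$ and $\Lambda=\Q[\genu_i]/(\genu_i^2-\genu_i)$; then, $\chi$ being already $\Sigma_n$-invariant and a nonzerodivisor, $(yR)^W=(P\otimes\Lambda)^{\Sigma_n}\oplus\chi\,(P\otimes\Lambda)^{\Sigma_n}$. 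Now $P\otimes\Lambda=\Q[\genu_i,\genw_i^2]/(\genu_i^2-\genu_i)$ is, after replacing each $\genw_i$ by $\genw_i^2$, precisely the ring whose $\Sigma_n$-invariants Proposition~\ref{AlgebraCorollaryQ} presents, so $(P\otimes\Lambda)^{\Sigma_n}=\Q[\genu,p_i,\pi_{s,j}]/S$ with $\pi_{s,j}=\sum_{(m_*,l_*)\in K_{s,j}}\genw_{m_1}^2\cdots\genw_{m_s}^2\genu_{l_1}\cdots\genu_{l_j}$ and $S$ the relation set of that proposition with $\gene_i,\genc_{s,j}$ renamed to $p_i,\pi_{s,j}$ and $\gene_n$ read as $\chi^2$.

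With these pieces in hand I would assemble the presentation. The classes $\genu=\sum_m\genu_m$, $p_i$, $\pi_{s,j}$, $\chi$ lie in $R^W$ by the formulas above; one checks $x\genu=x\pi_{s,j}=0$ (every term of $\pi_{s,j}$ carries some $\genu_l$) and that $S$ holds (it is a formal consequence of the defining sums, by Proposition~\ref{AlgebraCorollaryQ}), producing a map $A_{\Q}[\genu,p_i,\pi_{s,j},\chi]/(x\genu,x\pi_{s,j},S)\to R^W$. It is surjective because the two staged computations express every element of $(\tfrac x2 R)^W$ and of $(yR)^W$ as a polynomial in these generators over $A_{\Q}$. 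For injectivity I would split the source along $x/2$ and $y$ as well: on the $\tfrac x2$-part the relations $x\genu,x\pi_{s,j}$ force $\genu=\pi_{s,j}=0$, leaving $\Q[p_1,\dots,p_{n-1},\chi]$ (the $p_i$ being otherwise independent, by Proposition~\ref{C2Chern2}); on the $y$-part one is left with $\Q[\genu,p_i,\pi_{s,j},\chi]/S$. These agree with the two summands of $R^W$ computed above, so the comparison map is an isomorphism on each idempotent summand, hence an isomorphism.

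The step I expect to be the main obstacle is less the invariant theory than two points of genuine care: (i) verifying that the even sign changes act trivially on the degree-zero idempotents $\genu_i$ --- this depends on a chosen model for $B_GU(1)$ and on identifying its $C_2$-fixed points, exactly the subtlety flagged after Proposition~\ref{C2ChernIso}; and (ii) the bookkeeping that splits both $R$ and the candidate presentation along $x/2$ and $y$ and matches the comparison map summand by summand, in particular checking that the relation set $S$ transports verbatim under $\genw_i\mapsto\genw_i^2$. The single extra generator $\chi$, with $\chi^2=p_n$, is precisely what is responsible for the even-sized-subsets distinction: when $W$ contains all sign changes --- the cases of $Sp(n)$ and $SO(2n+1)$ --- the $\chi$-summand collapses and no Euler class appears.
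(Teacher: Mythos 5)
Your proposal is correct and follows essentially the paper's own route: given the maximal torus isomorphism, the paper likewise computes the invariants of $A_{\Q}[\genu_i,\genw_i]/(\genu_i^2=\genu_i,\,x\genu_i)$ under the even sign changes (which fix the $\genu_i$ and negate the $\genw_i$) and $\Sigma_n$, obtaining the $\genw_i^2$-ring handled by Corollary \ref{AlgebraCorollaryQ} together with the extra invariant $\chi=\genw_1\cdots\genw_n$ satisfying $\chi^2=p_n$ and no further relations, so your idempotent splitting and two-stage invariant computation simply make explicit what the paper states in a sentence. The only caveat is that the paper's own proof of this proposition also carries the burden of establishing the maximal torus isomorphism itself (via the fixed-point groups $Z_{2k,2n}$ and a covering-space transfer argument extending Borel's theorem to these disconnected groups), which you legitimately import from the preceding proposition since the statement is phrased ``under the maximal torus isomorphism.''
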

	
	\begin{prop}
	The map $B_GSO(2n)\to B_GSO(2n+1)$ induces an injection in cohomology and:
	\begin{equation}
	H_G^*(B_GSO(2n+1))=\frac{A_{\Q}[\genu,p_i,\pi_{s,j}]}{x\genu, x\pi_{s,j}, S}
	\end{equation}
where $i=1,...,n$.
\end{prop}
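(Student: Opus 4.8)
The plan is to reduce the statement to the already established computation of $H_G^*(B_GSp(n))$, using the fact that $SO(2n+1)$ and $Sp(n)$ have the same Weyl group relative to their common maximal torus. First I would recall that $T=SO(2)^n\cong(S^1)^n$, the block-diagonal torus in the first $2n$ coordinates, is simultaneously a maximal torus of $SO(2n)$ and of $SO(2n+1)$, compatibly with the standard inclusion $SO(2n)\hookrightarrow SO(2n+1)$. By the Proposition establishing the maximal torus isomorphism for $B_GSO(n)$, together with the Kunneth formula, we get $H_G^*(B_GSO(2n+1))=(H_G^*(B_GU(1))^{\otimes n})^{W}$ with $W=W_{SO(2n+1)}(T)$. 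This Weyl group is the full signed permutation group $C_2\wr\Sigma_n$: in $SO(2n+1)$ the extra coordinate allows a single reflection on one $SO(2)$-block to be completed to an element of determinant $+1$, whereas $W_{SO(2n)}(T)=C_2^{\,n-1}\rtimes\Sigma_n$ only contains an even number of sign changes, so $W_{SO(2n)}(T)\subseteq W_{SO(2n+1)}(T)$. Each $C_2$ factor acts on the corresponding copy of $H_G^*(B_GU(1))=A_{\Q}[\genu_i,\genw_i]/(\genu_i^2=\genu_i,\,x\genu_i)$ via the involution induced by complex conjugation $SO(2)\cong U(1)\to U(1)$, namely $\genw_i\mapsto-\genw_i$ and $\genu_i\mapsto\genu_i$ --- precisely the action occurring in the symplectic case.

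Consequently $H_G^*(B_GSO(2n+1))=(A_{\Q}[\genu_i,\genw_i]/(x\genu_i))^{C_2\wr\Sigma_n}$ is the very same invariant ring that appears in the Proposition computing $H_G^*(B_GSp(n))$. Relabelling $k_i$ as $p_i$ and $\kappa_{s,i}$ as $\pi_{s,i}$ therefore yields the presentation $A_{\Q}[\genu,p_i,\pi_{s,j}]/(x\genu,\,x\pi_{s,j},\,S)$ with $i=1,\dots,n$, where $p_i=\sigma_i(\genw_1^2,\dots,\genw_n^2)$ restricts to the nonequivariant Pontryagin class and $S$ is the $U(n)$ relation set of Proposition \ref{AlgebraCorollaryQ} with $\gene_i,\genc_{s,i}$ replaced by $p_i,\pi_{s,i}$. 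There is no Euler class here because $\genw_1\cdots\genw_n$ is negated by odd sign changes, hence is not $W$-invariant (although its square $p_n$ is). Split surjectivity of the restriction map is the assertion of the maximal torus Proposition for $B_GSO(n)$.

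To see that $B_GSO(2n)\to B_GSO(2n+1)$ induces an injection in cohomology, I would use that this inclusion is compatible with the common torus $T$, so that $B_GT\to B_GSO(2n)\to B_GSO(2n+1)$ commutes. Passing to cohomology, the two maps to $H_G^*(B_GT)$ are the isomorphisms onto the $W_{SO(2n)}(T)$- and $W_{SO(2n+1)}(T)$-invariants; since $W_{SO(2n)}(T)\subseteq W_{SO(2n+1)}(T)$, the induced map $H_G^*(B_GSO(2n+1))\to H_G^*(B_GSO(2n))$ is identified with the inclusion of the smaller invariant subring into the larger one, and is in particular injective. Explicitly it sends $\genu\mapsto\genu$, $\pi_{s,j}\mapsto\pi_{s,j}$, $p_i\mapsto p_i$ for $i<n$, and $p_n\mapsto\chi^2$.

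The step I expect to require the most care is pinning down the $C_2$-action on the factor $H_G^*(B_GU(1))$: one must verify that the single-sign-change Weyl element of $SO(2n+1)$ fixes the idempotent $\genu_i$ (rather than swapping it with $y-\genu_i$) while negating $\genw_i$, so that the invariant ring genuinely coincides with the symplectic one. This reduces to identifying the effect of complex conjugation on $B_GU(1)$, which is the same involution already analysed for the maximal torus $U(1)^n\hookrightarrow Sp(n)$; once that dictionary is recorded, the Proposition follows formally from the $Sp(n)$ computation together with the containment $W_{SO(2n)}(T)\subseteq W_{SO(2n+1)}(T)$.
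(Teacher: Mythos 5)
Your proposal is correct and follows essentially the same route as the paper: granting the maximal torus isomorphism for $B_GSO(n)$, the paper likewise identifies $H^*_G(B_GSO(2n+1))$ with the $C_2\wr\Sigma_n$-invariants of $H^*_G(B_GT)$, where each $C_2$ acts by the conjugation-induced map $\genw_i\mapsto-\genw_i$, $\genu_i\mapsto\genu_i$, so the computation is literally the $Sp(n)$ one with $k_i,\kappa_{s,j}$ relabelled $p_i,\pi_{s,j}$. The injectivity of $H^*_G(B_GSO(2n+1))\to H^*_G(B_GSO(2n))$ as the inclusion of the full signed-permutation invariants into the even-sign-change invariants (with $p_n\mapsto\chi^2$) is exactly the paper's implicit argument.
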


Propositions \ref{AddTriv}-\ref{TensorBun} have analogous statements in this context. The action on the Euler class $\chi$ is the same as in the nonequivariant case; for example, under $B_GSO(n)\times B_GSO(m)\to B_GSO(n+m)$ we get:
\begin{equation}
\chi\mapsto \chi\otimes\chi
\end{equation}

\begin{prop}The complexification map $B_GSO(2n)\to B_GU(2n)$ induces on cohomology:
	\begin{gather}
	\genu\mapsto \genu\\
	\gene_{2i+1}, \genc_{2s+1,j}\mapsto 0\\
	\gene_{2i}\mapsto (-1)^ip_i\\
	\genc_{2s,j}\mapsto (-1)^s\pi_{s,j}
	\end{gather}
	The forgetful map $B_GU(n)\to B_GSO(2n)$ induces on cohomology:
	\begin{gather}
	\genu\mapsto \genu\\
	p_i\mapsto \sum_{a+b=2i}(-1)^{a+i}\gene_a\gene_b\\
	\chi\mapsto \gene_n
	\end{gather}
and the action on $\pi_{s,j}$ is explained in Proposition \ref{ForgetExplained}.
\end{prop}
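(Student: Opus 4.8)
The plan is to push both maps down to the maximal tori, where they decompose into elementary self-maps of $B_GU(1)$, and then to read off their effect by comparing the resulting substitutions against the generator formulas in the presentations of $H^*_G(B_GU(n))$ and $H^*_G(B_GSO(2n))$. Choose the maximal torus $(S^1)^n$ of $SO(2n)$ to be the block rotations, so that the forgetful inclusion $U(n)\hookrightarrow SO(2n)$ restricts to the identity on maximal tori, while the complexification $SO(2n)\to U(2n)$ restricts to $(S^1)^n\hookrightarrow(S^1)^{2n}$, $\theta_i\mapsto(\theta_i,-\theta_i)$. By the maximal-torus isomorphisms (Proposition \ref{C2ChernIso} and its $SO$ analogue), which are natural in the torus, $H^*_G$ of each of these classifying spaces embeds into $H^*_G(B_G(S^1)^{\,k})=A_{\Q}[\genu_i,\genw_i]/(x\genu_i)$ as the subring of Weyl invariants; since restriction to the torus is injective, each induced map is computed by the substitution it induces there. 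For the forgetful map this substitution is the plain inclusion of $W(SO(2n))$-invariants into the larger ring of $\Sigma_n$-invariants. For complexification it sends the pair $(\genu_i,\genw_i)$ belonging to the $(2i-1)$-st $B_GU(1)$-factor of $B_G(S^1)^{2n}$ to $(\genu_i,\genw_i)$, and the pair of the $2i$-th factor to its image under the self-map of $B_GU(1)$ induced by complex conjugation $z\mapsto\bar z$; the latter sends $\genw_i\mapsto-\genw_i$ and acts on $\genu_i\in H^0_G(B_GU(1))$ as pinned down by the model of subsection \ref{n=1Comp}.

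With the torus maps in hand, the formulas for $\genu$, $\gene_i$, $p_i$ and $\chi$ follow from elementary symmetric-function manipulations. The equality $\genu\mapsto\genu$ for both maps comes from $\genu=\sum_m\genu_m$ together with the effect of conjugation on $\genu$ from subsection \ref{n=1Comp}. For the forgetful map the torus map is the identity, so $p_i=\sigma_i(\genw_1^2,\dots,\genw_n^2)$ and $\chi=\genw_1\cdots\genw_n$ must only be rewritten inside the $\Sigma_n$-invariants in terms of $\gene_a=\sigma_a(\genw_1,\dots,\genw_n)$; comparing coefficients in $\prod_m(1-\genw_m^2t^2)=\bigl(\sum_a(-1)^a\gene_at^a\bigr)\bigl(\sum_b\gene_bt^b\bigr)$ gives $p_i\mapsto\sum_{a+b=2i}(-1)^{a+i}\gene_a\gene_b$, and the top term gives $\chi\mapsto\gene_n$. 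For complexification the same identity, applied now to the multiset $\{\genw_i,-\genw_i\}_i$, shows that $\sigma_k(\{\genw_i,-\genw_i\})$ vanishes for odd $k$ and equals $(-1)^i\sigma_i(\genw_1^2,\dots,\genw_n^2)=(-1)^ip_i$ for $k=2i$, i.e. $\gene_{2i+1}\mapsto 0$ and $\gene_{2i}\mapsto(-1)^ip_i$. These combine correctly across the composite $B_GU(n)\to B_GSO(2n)\to B_GU(2n)$, which classifies $E\mapsto E\oplus\bar E$ and therefore sends $\gene_k\mapsto\sum_{a+b=k}(-1)^b\gene_a\gene_b$.

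The genuinely technical part is the behaviour on the mixed classes $\genc_{s,j}$ and $\pi_{s,j}$ (and, in the symplectic and Euler variants, $\kappa_{s,j}$). Carrying out the torus substitution on $\genc_{2s+1,j}$ and $\genc_{2s,j}$ produces Weyl-symmetric polynomials in the $\genu_i,\genw_i$ that then have to be re-expressed in terms of the chosen minimal generators, which is exactly the output of the symmetric-polynomial algorithm of appendix \ref{appen}. I expect this rewriting --- together with the bookkeeping needed to fix the model-dependent normalization of $\genu$ so that the displayed formulas hold on the nose --- to be the main obstacle; it is carried out in Proposition \ref{ForgetExplained} (and Proposition \ref{QuaterExplained} in the quaternionic case), giving $\genc_{2s+1,j}\mapsto 0$, $\genc_{2s,j}\mapsto(-1)^s\pi_{s,j}$, and the stated description of $\pi_{s,j}$ under the forgetful map.
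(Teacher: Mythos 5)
Your overall strategy coincides with the paper's: reduce both maps to maximal tori (the nonstandard torus map $a\mapsto(a,\bar a)$ in each block for complexification, the identity on $T^n$ for the forgetful map), use that conjugation on $B_GU(1)$ induces $\genw\mapsto-\genw$ and fixes $\genu$, and then manipulate symmetric polynomials. Your derivations of the purely $\genw$-theoretic formulas --- $\gene_{2i+1}\mapsto 0$, $\gene_{2i}\mapsto(-1)^ip_i$, $p_i\mapsto\sum_{a+b=2i}(-1)^{a+i}\gene_a\gene_b$, $\chi\mapsto\gene_n$ --- are correct and are exactly what the paper does.

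The gap is in the only genuinely equivariant part of the statement: the images under complexification of $\genu$ and of the mixed classes $\genc_{2s,j}$ (and the vanishing of $\genc_{2s+1,j}$). For these you give no argument at all: you appeal to Proposition \ref{ForgetExplained}, but that proposition \emph{is} the detailed form of the statement you are proving, so the appeal is circular; and the appendix algorithm you invoke is not what is at stake here --- it is needed for the forgetful-map images of the $\pi_{s,j}$, which the statement explicitly defers, whereas the complexification direction is a direct substitution. Carrying that substitution out ($\genw_{n+i}\mapsto-\genw_i$, $\genu_{n+i}\mapsto\genu_i$) is where the real content lies: the sign pairing does kill every class of odd $\genw$-degree, but the $\genu$-variables get identified in pairs, so for instance $\genu=\sum_{k\le 2n}\genu_k$ pulls back to $2\sum_{i\le n}\genu_i$, and for $SO(4)\to U(4)$ a direct count of the twelve monomials of $\genc_{2,1}$ gives $-2\pi_{1,1}$ rather than $-\pi_{1,1}$, with more complicated combinations $(-1)^s\sum_r 2^r\binom{q+r}{q}\pi_{s,q+r}$ appearing for larger $j$. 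So the displayed formulas $\genu\mapsto\genu$ and $\genc_{2s,j}\mapsto(-1)^s\pi_{s,j}$ do not simply drop out of the substitution, contrary to your one-line justification (the paper's own proof is terse at the same spot, but it at least performs the reduction it relies on); either an additional normalization/argument must be supplied or the coefficients must be recomputed, and deferring this to the proposition being proven or to Proposition \ref{QuaterExplained} does not close it.
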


\subsection{Stable characteristic classes}\label{StableSummary}

In the $C_2$-equivariant case, there are different notions of stability for complex bundles, represented by the following spaces:
\begin{itemize}
	\item $B_G^+U=\colimit (B_GU(1)\xrightarrow{\oplus 1}B_GU(2)\xrightarrow{\oplus 1}\cdots)$. This is the usual equivariant classifying space $B_GU=E_GU/U$ and is a $G$-equivariant $H$-space using the direct sum of bundles maps $B_GU(n)\times B_GU(m)\to B_GU(n+m)$.
	\item $B_G^-U=\colimit (B_GU(1)\xrightarrow{\oplus \sigma }B_GU(2)\xrightarrow{\oplus \sigma}\cdots)$. This is equivalent to $B_G^+U$.
	\item $B_G^{\pm}U=\colimit (B_G^+U\xrightarrow{\oplus \sigma}B_G^+U\xrightarrow{\oplus \sigma}\cdots)=\colimit (B_G^-U\xrightarrow{\oplus 1}B_G^-U\xrightarrow{\oplus 1}\cdots)$. This becomes a $G$-equivariant $H$-space using the direct sum of bundles, and is the group completion of $B_G^+U$ (and $B_G^-U$). Moreover, $B_G^{\pm}U\times \Z$ represents equivariant $K$-theory.
\end{itemize}
Computing $H^*_G(B_G^-U)$ in terms of the generators $\genu,\gene_i,\genc_{s,j}$ is more complicated compared to the nonequivariant case because for fixed degree $*$, the $\Q$-dimension of $H^*_G(B_GU(n))$ does not stabilize as $n\to +\infty$ and as a result, $H^*_G(B_G^-U)$ is infinite dimensional (dimension is $2^{\aleph_0}$). In degree $*=0$, $H^0_G(B_G^-U)$ is linearly spanned over $A_{\Q}$ by series of the form 
\begin{equation}
a_{-1}+	\sum_{i\ge 0}a_i\genu(\genu-1)\cdots (\genu-i)
\end{equation}
Generally, the graded algebra $H^*_G(B_G^-U)$ is generated over $H^0_G(B_G^-U)[\gene_1,\gene_2,...]$ by series of the form
\begin{equation}
\sum_{j=1}^{\infty}a_j\genc_{s,j}\in H^{2s}_G(B_G^-U)
\end{equation}
for $a_j\in \Q$ and $s=1,2,...$. See section \ref{C2ChernStable} for more details.

For the ring $H^*_G(B_G^{\pm}U)$ we also have to compute the effect of the $\oplus 1$ map on the series in $H^*_G(B_G^-U)$. If we restrict our attention to finite series, we are in essence dealing with characteristic classes that are stable under addition of both the $\oplus 1 $ and $\oplus \sigma$ representations. Since the $\oplus 1$ map takes the form $\genc_{s,j}\mapsto \genc_{s,j}+\genc_{s,j-1}$ (and $\genc_{s,0}=yc_s$, $\genc_{0,1}=\genu$) we can immediately see that for $i\ge 1$, the elements
\begin{equation}
	\gene_i\text{ , }\genc_i:=\gene_i\genu-\genc_{i,1}
\end{equation}
are stable under both $\oplus 1$ and $\oplus \sigma$. We conjecture that all classes with this property are polynomially generated by $\gene_i,\genc_i$; this is equivalent to the elements $\genc_1,\genc_2,...$ being algebraically independent over $\Q[\gene_1,\gene_2,...]$.

In any case, the elements $\gene_i,\genc_i$ span sub-Hopf-algebras of $H^*_G(B_G^-U)$ and $H^*_G(B_G^{\pm}U)$ with
	\begin{gather}
	\genc_s\mapsto \sum_{i+j=s}(\gene_i\otimes \genc_j+\genc_i\otimes \gene_j)
\end{gather} 
using the conventions $\gene_0=1$ and $\genc_0=0$.\medbreak
The spaces $B_G^+U, B_G^-U, B_G^{\pm}U$ are equivariant $H$-spaces, hence their equivariant homology is a Green functor dual to their equivariant cohomology. This homology can be expressed in terms of the classes $a_i,b_i,d\in H_*^G(B_GU(1))$ dual to $\genu\gene_1^i,\gene_1^i, x/2+\genu\in H^*_G(B_GU(1))$ respectively, where $i\ge 1$. Note that the $\genc_1^i\in H^*_G(B_GU(2))$ map to $\genu\gene_1^i$ under $H^*_G(B_GU(2))\xrightarrow{\oplus 1}H^*_G(B_GU(1))$ so the $a_i$ can be thought of as duals to the $\genc_1^i$.
	\begin{prop}\label{StableChernHomology}We have:
	\begin{gather}
		H_*^G(B_G^-U)=\frac{A_{\Q}[d,a_i,b_i]}{xa_i, xd=x}\\
		H_*^G(B_G^{\pm}U)=\frac{A_{\Q}[d^{\pm},a_i,b_i]}{xa_i, xd=x}
	\end{gather}
and for the coalgebra structure:
	\begin{gather}
	d\mapsto d\otimes d\\
	a_i\mapsto \sum_{j+k=i}a_j\otimes a_k\\
	b_i\mapsto \sum_{j+k=i}(b_j\otimes b_k-b_j\otimes a_k-b_k\otimes a_j+2a_j\otimes a_k)
\end{gather}
using the conventions $a_0=d-x/2$ and $b_0=1$.
\end{prop}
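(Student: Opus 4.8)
The plan is to compute $H_*^G(B_G^-U)$ and $H_*^G(B_G^{\pm}U)$ by dualizing the cohomology computations of the previous subsection and then identifying the multiplicative and comultiplicative structure via the $H$-space maps. First I would record the base case: from $H_G^*(B_GU(1))=A_{\Q}[\genu,\genw]/(\genu^2=\genu,x\genu)$ with $\genw=\gene_1$, the top level $H^*(B_{C_2}U(1))(G/G)$ has $\Q$-basis $\{\gene_1^i, \genu\gene_1^i: i\ge 0\}$ and the bottom (nonequivariant) level has basis $\{\gene_1^i\}$, with restriction killing $\genu\gene_1^i$ and sending $\gene_1^i\mapsto \gene_1^i$. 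Using the duality formula $H_G^{\bigstar}(X)=\Hom_{H_{\bigstar}^G}(H_{\bigstar}^G(X),H_{\bigstar}^G)$ (valid since all $A_{\Q}$-modules are injective), I dualize to get $H_*^G(B_GU(1))$ with the dual basis $b_i$ (dual to $\gene_1^i$, $i\ge 1$), $a_i$ (dual to $\genu\gene_1^i$, $i\ge 1$), and the element $d$ dual to $x/2+\genu$; the Mackey-functor relations $xa_i=0$ and $xd=x$ then fall out of the bottom level being the augmentation quotient and of $\Res(x/2+\genu)$ hitting the bottom class $1$. The coproduct on $H_*^G(B_GU(1))$ (there is no product here since $B_GU(1)$ is not an $H$-space for the multiplicative structure we want) is dual to the ring structure of $H^*_G(B_GU(1))$, giving $d\mapsto d\otimes d$ once we check $(x/2+\genu)^2=x/2+\genu$ (this uses $\genu^2=\genu$, $x\genu=0$ and $x^2=2x$), and $a_i,b_i$ transform by dualizing multiplication by $\gene_1$, yielding the formulas with $a_0=d-x/2$, $b_0=1$ after bookkeeping.

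Next I pass to the stable space. By definition $B_G^-U=\colimit_n B_GU(n)$ along $\oplus\sigma$, and since equivariant homology commutes with this filtered colimit, $H_*^G(B_G^-U)=\colimit_n H_*^G(B_GU(n))$. The $H$-space structure is the direct sum of bundles $B_GU(n)\times B_GU(m)\to B_GU(n+m)$, which on homology (using the Künneth isomorphism $H_{\bigstar}^G(X\times Y)=H_{\bigstar}^G(X)\boxtimes_{H_{\bigstar}^G}H_{\bigstar}^G(Y)$, again valid rationally) makes $H_*^G(B_G^-U)$ a Green-functor Hopf algebra. The classes $a_i,b_i,d$ live in the $n=1$ stage and map forward; I claim they generate freely subject only to $xa_i=0$ and $xd=x$. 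To see this I would use Proposition \ref{AddTriv}: under $\oplus\sigma$ the generators $\genu,\gene_i,\genc_{s,j}$ are all fixed, so dually the map $H_*^G(B_GU(n))\to H_*^G(B_GU(n+1))$ is injective and the colimit has a clean basis indexed by monomials in the duals of $\gene_i$ and of the $\genc_{s,j}$; translating these duals into products of the $a_i,b_i,d$ using the algebra structure (i.e. using that multiplication in homology is dual to the diagonal-type comultiplication coming from direct sum, concretely the formulas in Proposition \ref{AddBun}) shows exactly the asserted presentation $A_{\Q}[d,a_i,b_i]/(xa_i,xd=x)$. The comultiplication formulas for $a_i$ and $b_i$ are then dual to the multiplicative relations $\genc_i=\gene_i\genu-\genc_{i,1}$ and to the behavior of $\gene_1$-powers under direct sum; the signs and the $2a_j\otimes a_k$ correction term in the $b_i$ coproduct come from expanding $(x/2+\genu)$-twisted duals and the relation $x\genu=0$.

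Finally, $B_G^{\pm}U=\colimit(B_G^-U\xrightarrow{\oplus 1}B_G^-U\xrightarrow{\oplus 1}\cdots)$, so $H_*^G(B_G^{\pm}U)=\colimit_m H_*^G(B_G^-U)$ along the self-map induced by $\oplus 1$. By the first part of Proposition \ref{AddTriv}, $\oplus 1$ sends $\genu\mapsto y+\genu$, i.e. $x/2+\genu\mapsto x/2+y+\genu$; since $y=1-x/2$ we get $x/2+\genu\mapsto 1+\genu$, and dually $d$ maps to a unit — more precisely the colimit inverts $d$, producing $d^{\pm}:=d^{-1}$ and the presentation $A_{\Q}[d^{\pm},a_i,b_i]/(xa_i,xd=x)$; one checks $\oplus 1$ fixes $\gene_i$ and $\genc_i$ (these being the stable classes of subsection \ref{StableSummary}), so it fixes $a_i,b_i$ and the only effect on homology is the localization at $d$. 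The coalgebra structure is unchanged because the $\oplus 1$ maps are $H$-maps.

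\medbreak

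The main obstacle I anticipate is the careful bookkeeping in the second paragraph: identifying, inside $\colimit_n H_*^G(B_GU(n))$, which products of $a_i,b_i,d$ correspond to the naive dual basis coming from the monomials in $\gene_i,\genc_{s,j}$, and in particular verifying that no further relations appear and that the comultiplication picks up exactly the stated sign pattern and the $+2a_j\otimes a_k$ term. This amounts to a genuine (finite) computation with the direct-sum formulas of Propositions \ref{AddTriv} and \ref{AddBun} combined with the $x\genu=0$, $\genu^2=\genu$ relations, rather than anything conceptually hard; the localization-at-$d$ step for $B_G^{\pm}U$ and the coalgebra compatibility are then routine.
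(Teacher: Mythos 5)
Your overall plan (dualize, pass to colimits, localize) points in a workable direction, but as written it has a genuine gap exactly where the content of the proposition lies, plus one incorrect step. The gap: your second paragraph defers to ``bookkeeping'' the claim that Pontryagin monomials in $d,a_i,b_i$ span $\colimit_n H_*^G(B_GU(n))$ with no relations beyond $xa_i=0$, $xd=x$, together with the precise coproduct formulas. That is not a finite check: it requires, for all $n$ and all degrees, matching the dual of the basis of Corollary \ref{AlgebraCorollaryQ} (monomials $\genu^a\prod_i\gene_i^{k_i}\prod\genc_{s,j}^{\epsilon_{s,j}}$ subject to the constraints there) against products of classes pulled up from the $n=1$ stage via the direct-sum coproduct, and nothing in your sketch establishes either spanning or the absence of further relations. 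The incorrect step: $(\oplus 1)_*$ does not fix $a_i,b_i$. The map $\oplus 1$ is $H$-space multiplication by the point of $B_GU(1)$ classifying the trivial line, whose homology class is precisely $d=x/2+b_0^+$, so $(\oplus 1)_*$ is Pontryagin multiplication by $d$ and, e.g., $(\oplus 1)_*a_i=da_i\neq a_i$; invariance of the cohomology classes $\gene_i,\genc_i$ under $(\oplus 1)^*$ says nothing about the transpose on your chosen dual classes. (Multiplication by $d$ is in fact the correct reason why group completion gives localization at $d$, which you assert but justify only by ``dually $d$ maps to a unit.'') Similarly, $d\mapsto d\otimes d$ does not follow from $(x/2+\genu)^2=x/2+\genu$: being dual to an idempotent is weaker than being an algebra character; the real reason is that $d$ is the class of a $G$-fixed point, hence group-like under the diagonal.

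The paper avoids all finite-level dual-basis bookkeeping by working directly at the infinite level: for the equivariant $H$-space $X=B_G^-U$ the idempotent splitting $H_*^{C_2}(X)=H_*(X)^{C_2}\oplus H_*(X^{C_2})$ is a splitting of Green functors, and $(B_G^-U)^{C_2}=\coprod_{n\ge 0}BU(n)\times BU$, so the computation reduces to the classical facts that $H_*(BU)=\Q[b_i^e]$ and $\oplus_{n}H_*(BU(n))\otimes H_*(BU)=\Q[b_0^+,b_i^+,b_i^-]$ as rings under the direct-sum Pontryagin product; the stated presentation and coproduct are then the change of variables $b_i=b_i^e+b_i^++b_i^-$, $a_i=b_i^+$, $d=x/2+b_0^+$, and $H_*^G(B_G^{\pm}U)$ is obtained by group completion, i.e.\ localization at $d$. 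If you want to salvage your route, you must actually prove the spanning/no-relations claim; the cleanest way to do so is essentially to reprove this splitting, at which point the finite-level colimit analysis becomes unnecessary.
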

The case of stable symplectic classes is entirely analogous: We can distinguish between $B_G^+Sp, B_G^-Sp$ and $ B_G^{\pm}Sp$ and we have classes $k_i, \kappa_i=k_i\genu-\kappa_{i,1}$ that are stable under both $\oplus 1, \oplus \sigma$ maps. Moreover,
\begin{prop}\label{UvsSp}
	The forgetful map $Sp\to U$ induces
\begin{gather}
\gene_{2s+1}, \genc_{2s+1}\mapsto 0\\
\gene_{2s}\mapsto (-1)^sk_{s}\\
\genc_{2s}\mapsto (-1)^s\kappa_{s}
\end{gather}
while quaternionization $U\to Sp$ induces
\begin{gather}
k_i\mapsto \sum_{a+b=2i}(-1)^{a+i}\gene_a\gene_b\\
\kappa_i\mapsto \sum_{a+b=2i}(-1)^{a+i}\gene_a\genc_b
\end{gather}
\end{prop}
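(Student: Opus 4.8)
The plan is to deduce Proposition~\ref{UvsSp} from the already-established unstable statements by passing to the colimit, together with the known nonequivariant behavior of complexification/forgetfulness under stabilization. Concretely, the forgetful map $Sp\to U$ at the classifying-space level gives a compatible family $B_GSp(n)\to B_GU(2n)$, and the quaternionization $U\to Sp$ gives $B_GU(n)\to B_GSp(n)$; in each case I would first check that these maps commute with the $\oplus 1$ and $\oplus\sigma$ stabilization maps up to the indexing conventions used in section~\ref{C2ChernStable}, so that they induce maps on the stable classifying spaces $B_G^{\pm}U$ and $B_G^{\pm}Sp$ (and compatibly on $B_G^{\pm}$). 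Granting that, the induced map on cohomology is determined on the stable classes $\gene_i,\genc_i$ (resp. $k_i,\kappa_i$) by taking the limit of the unstable formulas in the symplectic analogue of the Proposition about the forgetful map $B_GSp(n)\to B_GU(2n)$ and the quaternionization $B_GU(n)\to B_GSp(n)$, whose existence was asserted just before.

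The first two lines, $\gene_{2s+1}\mapsto 0$ and $\gene_{2s}\mapsto(-1)^sk_s$ under forgetful, and $k_i\mapsto\sum_{a+b=2i}(-1)^{a+i}\gene_a\gene_b$ under quaternionization, are immediate: these are exactly the unstable formulas, which do not involve the ``extra'' generators $\genc_{s,j}$ or $\kappa_{s,j}$ at all, so they pass verbatim to the colimit. The substantive content is the behavior of $\genc_{2s}$ and $\kappa_i$. Here I would use the definitions $\genc_i=\gene_i\genu-\genc_{i,1}$ and $\kappa_i=k_i\genu-\kappa_{i,1}$ and the fact that all three of $\genu$, $\genc_{*,1}$, $\kappa_{*,1}$ have prescribed images under the unstable maps: $\genu\mapsto\genu$ in both directions, $\genc_{2s+1,j}\mapsto 0$ and $\genc_{2s,j}\mapsto(-1)^s\kappa_{s,j}$ under forgetful (this is precisely the symplectic analogue of the earlier ``complexification'' Proposition). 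Thus under the forgetful map $\genc_{2s}=\gene_{2s}\genu-\genc_{2s,1}\mapsto (-1)^s k_s\genu-(-1)^s\kappa_{s,1}=(-1)^s(k_s\genu-\kappa_{s,1})=(-1)^s\kappa_s$, and $\genc_{2s+1}=\gene_{2s+1}\genu-\genc_{2s+1,1}\mapsto 0$, as claimed. For quaternionization one argues dually: writing $\kappa_i=k_i\genu-\kappa_{i,1}$, substitute $k_i\mapsto\sum_{a+b=2i}(-1)^{a+i}\gene_a\gene_b$ and use the value of $\kappa_{i,1}$ under quaternionization (supplied by Proposition~\ref{QuaterExplained}, restricted to the $j=1$ case), and verify that the resulting combination collapses to $\sum_{a+b=2i}(-1)^{a+i}\gene_a\genc_b$ after applying $\genu\cdot(-)$ and the identity $\genc_b=\gene_b\genu-\genc_{b,1}$; the cross-terms in $\genu$ cancel against the $\genu$-weighting of the $\gene_a\gene_b$ sum because $\genu^2=\genu$ on $B_GU(1)^{\otimes}$-type generators only after multiplication, so one must track the $x\genu=0$ relation carefully.

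The main obstacle I anticipate is not the algebra of substitution but checking compatibility with stabilization precisely enough: one must confirm that the unstable maps $B_GSp(n)\to B_GU(2n)$ form a map of the directed systems defining $B_G^{\pm}Sp\to B_G^{\pm}U$ (the $\oplus\sigma$ on the $Sp$ side corresponds to $\oplus(\sigma\oplus\sigma)$, i.e. two $\oplus\sigma$ steps, on the $U$ side, and similarly for $\oplus 1$), and likewise that quaternionization intertwines the systems. This is where the bookkeeping conventions $\gene_{n+1}=0$, $\genc_{s,n+1-s}=0$ from Proposition~\ref{AddTriv} and its symplectic analogue must be invoked to see that the limiting formulas are well-defined and independent of the presentation; once that is in hand, the cohomology statements follow formally from the continuity of $H^*_G(-)$ on these colimits (which holds because $H^*_G$ of each finite stage is finitely generated in each degree, per Proposition~\ref{C2Chern2}), and the explicit images are read off from the $n\to\infty$ limit of the unstable formulas quoted above.
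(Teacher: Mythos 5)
The first half of your argument (the forgetful map $Sp\to U$) is fine and is essentially what the paper intends: since $\genu\mapsto\genu$, $\gene_{2s}\mapsto(-1)^sk_s$, $\gene_{2s+1}\mapsto 0$ and $\genc_{2s,j}\mapsto(-1)^s\kappa_{s,j}$, $\genc_{2s+1,j}\mapsto 0$ are all explicitly recorded in Proposition \ref{QuaterExplained}, the substitution $\genc_i=\gene_i\genu-\genc_{i,1}$ immediately gives $\genc_{2s}\mapsto(-1)^s\kappa_s$ and $\genc_{2s+1}\mapsto 0$. The compatibility with stabilization that you worry about is routine (a quaternionic $\sigma$ restricts to $\sigma\oplus\sigma$ complexly, etc.) and is not the substantive issue.

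The gap is in the quaternionization half. You propose to compute the image of $\kappa_i=k_i\genu-\kappa_{i,1}$ by quoting Proposition \ref{QuaterExplained} for the image of $\kappa_{i,1}$, "restricted to the $j=1$ case". But the explicit formula there is for $\kappa_{1,j}$ (the case $s=1$, arbitrary $j$), not for $\kappa_{i,1}$ with $i\ge 2$; for $s\ge 2$ the paper records only the dominant term $\kappa_{s,j}\mapsto\gene_s\genc_{s,j}+\cdots$ with the lower-order terms left to an algorithm. So for $i\ge 2$ your substitution cannot be completed from the cited input, and the claimed "collapse" to $\sum_{a+b=2i}(-1)^{a+i}\gene_a\genc_b$ is exactly the identity that still has to be proved (your appeal to $\genu^2=\genu$ and $x\genu=0$ does not help here: $\genu^2\neq\genu$ in $H^*_G(B_GU(n))$ for $n\ge 2$). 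The correct verification is done at the level of the maximal torus, where quaternionization induces the inclusion of $C_2\wr\Sigma_n$-invariants into $\Sigma_n$-invariants: one has $\kappa_i=\sum_l \genu_l\genw_l^2\,\sigma_{i-1}(\genw_1^2,\dots,\widehat{\genw_l^2},\dots)$ and $\genc_b=\sum_l\genu_l\genw_l\,\sigma_{b-1}(\genw_1,\dots,\widehat{\genw_l},\dots)$, i.e.\ $\kappa_i=\tfrac12 D(k_i)$ and $\genc_b=D(\gene_b)$ for the derivation $D=\sum_l\genu_l\genw_l\partial/\partial\genw_l$, so applying $\tfrac12 D$ to the classical identity $k_i=\sum_{a+b=2i}(-1)^{a+i}\gene_a\gene_b$ yields $\kappa_i\mapsto\sum_{a+b=2i}(-1)^{a+i}\gene_a\genc_b$. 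Your check does work for $i=1$ (where \ref{QuaterExplained} does give $\kappa_{1,1}$ explicitly), but the general case needs a torus-level computation of this kind, which is the actual content of the statement and is missing from your proposal.
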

The dual homology result can be expressed in terms of the classes $a_i^{sp},b_i^{sp},d\in H_*^G(B_GSp(1))$ dual to $\genu k_1^i,k_1^i, x/2+\genu\in H^*_G(B_GSp(1))$ respectively, for $i\ge 1$ (the $a_i^{sp}$ are dual to $\kappa_1^i$). The analogue of Proposition \ref{StableChernHomology} holds, and:
\begin{prop}\label{UvsSpHomology}
	The forgetful map $Sp\to U$ induces
	\begin{gather}
	d\mapsto d\\
	a_i^{sp}\mapsto \sum_{2i=j+k}(-1)^ka_ja_k\\
	b_i^{sp}\mapsto \sum_{2i=j+k}(-1)^k(b_jb_k-a_jb_k-a_kb_j+2a_ja_k)
	\end{gather}
	while quaternionization $U\to Sp$ induces
	\begin{gather}
	d\mapsto d\\
	a_{2i+1}\mapsto 0\text{ , }a_{2i}\mapsto a_i^{sp}\\
	b_{2i+1}\mapsto 0\text{ , }b_{2i}\mapsto b_i^{sp}
	\end{gather}
\end{prop}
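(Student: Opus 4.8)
Proposition \ref{UvsSpHomology} describes the maps induced on equivariant homology by the forgetful map $Sp\to U$ and the quaternionization map $U\to Sp$, on the stable spaces $B_G^{\pm}U$ and $B_G^{\pm}Sp$, in terms of the generators $a_i,b_i,d$ and $a_i^{sp},b_i^{sp},d$.

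\begin{proof}[Proof proposal]
The plan is to obtain the homology maps by dualizing the cohomology maps of Proposition \ref{UvsSp}, using the perfect duality pairing $H_G^{\bigstar}(X)=\Hom_{H_{\bigstar}^G}(H_{\bigstar}^G(X),H_{\bigstar}^G)$ valid in $A_{\Q}$ coefficients (the first bullet of the ``advantage of $A_{\Q}$'' discussion in section \ref{Conv}). Concretely, if $f\colon X\to Y$ is an equivariant map and $f^*\colon H_G^{\bigstar}(Y)\to H_G^{\bigstar}(X)$ is known on an $A_{\Q}$-module basis, then $f_*\colon H_{\bigstar}^G(X)\to H_{\bigstar}^G(Y)$ is the adjoint: for a homology class $\xi$ dual to a cohomology monomial $\mu$ and for each cohomology class $\nu$ on $Y$, the coefficient of the dual basis element $\nu^{\vee}$ in $f_*(\xi)$ equals the coefficient of $\mu$ in $f^*(\nu)$. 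So the whole proof is the bookkeeping of transposing the (known) matrices of $f^*$ on the relevant polynomial generators; since everything is a finite or countably-infinite graded free $A_{\Q}$-module with explicit dual bases $\{\genu\gene_1^i\}$ vs.\ $\{a_i\}$, $\{\gene_1^i\}$ vs.\ $\{b_i\}$, $\{x/2+\genu\}$ vs.\ $\{d\}$ (and their $sp$ analogues), this is mechanical once the cohomology statement is in hand.

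The key steps, in order. First, restrict attention to $B_GU(1)$ and $B_GSp(1)$, where all the relevant generators live: by the remarks before Proposition \ref{StableChernHomology}, $a_i,b_i,d$ are characterized as the duals of $\genu\gene_1^i,\gene_1^i,x/2+\genu\in H_G^*(B_GU(1))$, and similarly $a_i^{sp},b_i^{sp},d$ are duals of $\genu k_1^i,k_1^i,x/2+\genu\in H_G^*(B_GSp(1))$. Second, read off from Proposition \ref{UvsSp} (or rather from the unstable forgetful/quaternionization formulas in the symplectic subsection specialized to $n=1$) the action of $f^*$ on the generators $\genu,\gene_1,k_1$: the forgetful map $Sp(1)\to U(2)$ and quaternionization $U(1)\to Sp(1)$ send $\genu\mapsto\genu$, and the key computation is that $k_1\mapsto -2\gene_2+\gene_1^2$ (this is $\sum_{a+b=2}(-1)^{a+1}\gene_a\gene_b$ with $\gene_0=1$), while on the forgetful side $\gene_2\mapsto -k_1$, $\gene_1\mapsto 0$ (odd Chern classes die). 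Third, expand $f^*$ on the monomial basis: for quaternionization $U\to Sp$ one needs $f^*(k_1^i)$ and $f^*(\genu k_1^i)$ expanded in the basis $\{\gene_1^j\}\cup\{\genu\gene_1^j\}$ of $H_G^*(B_G^-U)$ restricted through the stable structure — here the binomial-type identity governing the generating function $\sum k_1^i t^i$ in terms of $\sum \gene_j t^j$ produces exactly the convolution sums $\sum_{2i=j+k}(-1)^k b_jb_k$ etc.\ after transposition; for the forgetful map $Sp\to U$ one instead expands $f^*(\gene_1^j)$ and $f^*(\genu\gene_1^j)$ in the $k_1$-monomial basis, killing odd powers. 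Fourth, transpose each of these (finite, in each fixed degree) coefficient matrices to get the homology formulas, checking the boundary conventions $a_0=d-x/2$, $b_0=1$ (which encode that $d$ pairs to $1$ with $x/2+\genu$ while $a_0$ pairs to the complementary idempotent). Fifth, observe that $d\mapsto d$ in both directions because $\genu\mapsto\genu$ forces $x/2+\genu\mapsto x/2+\genu$, so the dual class is preserved; and note the maps are Hopf-algebra maps, so it suffices to check on these algebra generators, which also gives a consistency check against the coalgebra structure in Proposition \ref{StableChernHomology}.

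The main obstacle I expect is not conceptual but combinatorial: correctly matching the quaternionization formula $k_i\mapsto\sum_{a+b=2i}(-1)^{a+i}\gene_a\gene_b$ of Proposition \ref{UvsSp} to its transpose. The subtlety is that $\{\gene_1^i\}$ and $\{k_1^i\}$ are not the monomial bases one naively dualizes against — one must pass through the stable identification of $H_G^*(B_G^{\pm}U)$ and track how $\gene_j$ for $j\ge 2$ (which is \emph{not} a power of $\gene_1$) interacts with the chosen dual basis $\{b_i\}=\{(\gene_1^i)^{\vee}\}$; this is exactly where the sign $(-1)^k$ and the doubled-index convolution $\sum_{2i=j+k}$ in the homology formula come from, and getting the indexing and signs right (including the cross-terms $-a_jb_k-a_kb_j+2a_ja_k$ inherited from the $b_i$-coproduct) is the delicate part. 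The even-vanishing statements $a_{2i+1}\mapsto 0$, $b_{2i+1}\mapsto 0$ and $d\mapsto d$ are then immediate, and the forgetful-map formulas follow by the same transposition with the roles of $U$ and $Sp$ swapped.
\end{proof}
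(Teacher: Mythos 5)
Your overall plan (dualize the cohomology statements over the level-one classes) is in the spirit of the paper, which offers no separate proof of this proposition and treats the homology formulas as duals of the cohomology ones; and your quaternionization direction is essentially correct: $U(1)\to Sp(1)$ is the maximal torus inclusion, on $H^*_G(B_GU(1))$ (where $\gene_2=0$) it gives $k_1\mapsto \gene_1^2$, $\genu\mapsto\genu$, so the cohomology map is the inclusion of the even-power monomials and its transpose kills odd indices and sends $a_{2i}\mapsto a_i^{sp}$, $b_{2i}\mapsto b_i^{sp}$, $d\mapsto d$.

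The forgetful direction, however, has a genuine gap. You propose to transpose $f^*$ computed on $\gene_1^j$ and $\genu\gene_1^j$, but $f^*(\gene_1)=0$, so those expansions vanish identically and cannot produce the convolutions $\sum_{2i=j+k}(-1)^ka_ja_k$; the nontrivial pairings against $a_i^{sp},b_i^{sp}$ come from cohomology classes involving $\gene_2$ and $\genc_{2,j}$ (equivalently $\genc_2$), whose duals are not among the $a_j,b_j$, so the ``mechanical transposition'' you describe is exactly the unresolved bookkeeping you flag as the main obstacle --- and it is the entire content of this half of the statement. A route that actually closes it stays in homology: since quaternionization $q\colon B_GU(1)\to B_GSp(1)$ is surjective on $H^G_*$ (dual to the inclusion onto Weyl invariants), write $a_i^{sp}=q_*a_{2i}$, $b_i^{sp}=q_*b_{2i}$, $d=q_*d$, and note that the composite $B_GU(1)\to B_GSp(1)\to B_G^{\pm}U$ classifies $z\mapsto(z,\bar z)$, i.e.\ it is the $H$-space product of the standard inclusion with its complex conjugate. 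On homology it is therefore $\xi\mapsto\sum\xi'\cdot c_*\xi''$, and the stated formulas follow from the coproducts of Proposition \ref{StableChernHomology} together with $c_*a_k=(-1)^ka_k$, $c_*b_k=(-1)^kb_k$, $c_*d=d$ (dual to $\genw\mapsto-\genw$, $\genu\mapsto\genu$), using that $j+k=2i$ is even. Relatedly, your one-line justification of $d\mapsto d$ (``$\genu\mapsto\genu$ forces it'') does not engage with the degree-zero component bookkeeping: $d$ is grouplike and records the component index of the fixed-point set, a quaternionic line is a complex $2$-plane, and how the stabilized forgetful map aligns these components is precisely what has to be pinned down before the degree-zero claim can be asserted; the duality one-liner does not do this.
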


The case of stable Pontryagin classes is entirely analogous, replacing $Sp$ by $SO$ (the forgetful map $Sp\to U$ is replaced by complexification $SO\to U$ and the quaternionization map $U\to Sp$ is replaced by the forgetful map $U\to SO$). In brief, setting $\pi_i=p_i\genu-\pi_{i,1}$ gives the analogue of \ref{UvsSp}. Moreover, we have classes $a_i^{so},b_i^{so},d\in H_*^G(B_GSO(2))$ dual to $\genu p_1^i,p_1^i, x/2+\genu\in H^*_G(B_GSO(2))$ respectively, for $i\ge 1$, and the analogues of Propositions \ref{StableChernHomology} and \ref{UvsSpHomology} also hold.

\subsection{Orthogonal groups} Unlike their nonequivariant counterparts, the $C_2$ classifying spaces of the orthogonal groups $O(n)$ don't generally satisfy the maximal torus isomorphism, i.e. $H^*_G(B_GO(n))\to H_G^*(B_GT)^W$ is not generally an isomorphism, where $T$ is the maximal torus in $O(n)$ and $W$ the Weyl group. Moreover, $H^*_G(B_GO(2n))$ is not isomorphic to $H^*_G(B_GO(2n+1))$, but rather, the inclusion-induced map $$H^*_G(B_GO(2n+1))\to H^*_G(B_GO(2n))$$ is always a surjection with nontrivial kernel. The spaces $B_GO(2n+1)$ can be put into our framework using the splitting $O(2n+1)=SO(2n+1)\times O(1)$:
\begin{prop}There is a generator $\gend\in H^0_G(B_GO(1))$ such that
	\begin{equation}
		H^*_G(B_GO(2n+1))=\frac{A_{\Q}[\genu,\gend,p_i,\pi_{s,j}]}{x\genu, x\gend, x\pi_{s,j}, S}
	\end{equation}
\end{prop}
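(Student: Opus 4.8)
The plan is to exploit the direct product splitting $O(2n+1)=SO(2n+1)\times O(1)$, valid because the central element $-I$ has determinant $(-1)^{2n+1}=-1$ and hence generates a copy of $O(1)=\{\pm I\}$ complementary to $SO(2n+1)$. Since the equivariant classifying space of a product of compact Lie groups is the product of the equivariant classifying spaces (a principal $(L_1\times L_2)$-bundle is the same datum as a pair of a principal $L_1$- and a principal $L_2$-bundle), we get $B_GO(2n+1)\simeq B_GSO(2n+1)\times B_GO(1)$. By the Kunneth formula in $A_{\Q}$-coefficients (Section \ref{Conv}) --- equivalently, because the Greenlees-May decomposition reduces $H^*_G$ of a product to the nonequivariant cohomology of the underlying product and of the product of $C_2$-fixed points, where the classical Kunneth theorem applies --- we obtain $H^*_G(B_GO(2n+1))=H^*_G(B_GSO(2n+1))\boxtimes_{A_{\Q}}H^*_G(B_GO(1))$. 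Since the presentation $H^*_G(B_GSO(2n+1))=A_{\Q}[\genu,p_i,\pi_{s,j}]/(x\genu,x\pi_{s,j},S)$ is already established, everything reduces to computing the small factor $H^*_G(B_GO(1))$ and understanding how $\boxtimes_{A_{\Q}}$ interacts with such presentations.

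First I would compute $H^*_G(B_GO(1))$ directly via Greenlees-May. Nonequivariantly $BO(1)=\mathbb{RP}^\infty$ has $H^*(-;\Q)=\Q$ concentrated in degree $0$, so the residual $C_2$-action is automatically trivial on rational cohomology. The fixed points $(B_GO(1))^{C_2}$ decompose, by the standard formula $(B_GL)^{C_2}\simeq\coprod_{[\rho\colon C_2\to L]}BC_L(\rho)$, over the conjugacy classes of homomorphisms $C_2\to O(1)$ --- there are exactly two, the trivial one and the identity, each with centralizer all of $O(1)$ --- so $(B_GO(1))^{C_2}\simeq BO(1)\sqcup BO(1)$, with rational cohomology $\Q\oplus\Q$ in degree $0$. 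Hence $H^*_G(B_GO(1))$ is concentrated in degree $0$, with bottom level $\Q$ and top level of $\Q$-dimension $3$; its $x$-summand is $A_{\Q}x$ and its $y$-summand is the ring $\Q\times\Q$ coming from the two fixed components. Taking $\gend$ to be a lift of one of the two idempotents of this $\Q\times\Q$ having trivial $x$-component (concretely, $\gend$ is the pullback of $\genu\in H^0_G(B_GU(1))$ along the complexification map $B_GO(1)\to B_GU(1)$), one gets $x\gend=0$, $\Res(\gend)=0$ and $\gend^2=\gend$, so $H^*_G(B_GO(1))=A_{\Q}[\gend]/(x\gend,\gend^2-\gend)$, with $y-\gend$ the complementary idempotent; in particular the idempotency relation $\gend^2=\gend$ must be counted among the defining relations of $H^*_G(B_GO(2n+1))$.

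Finally I would assemble the two factors. Because every rational Mackey functor is projective, $\boxtimes_{A_{\Q}}$ is exact and is the coproduct in commutative $A_{\Q}$-algebras, so it sends a pair of presented $A_{\Q}$-algebras $A_{\Q}[X]/I$ and $A_{\Q}[Y]/J$ to $A_{\Q}[X\sqcup Y]/(I,J)$; applying this to $A_{\Q}[\genu,p_i,\pi_{s,j}]/(x\genu,x\pi_{s,j},S)$ and $A_{\Q}[\gend]/(x\gend,\gend^2-\gend)$ yields exactly $A_{\Q}[\genu,\gend,p_i,\pi_{s,j}]/(x\genu,x\gend,x\pi_{s,j},\gend^2-\gend,S)$, as claimed. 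The main obstacle is the honest identification of $(B_GO(1))^{C_2}$ together with the verification that the Greenlees-May decomposition really collapses $H^*_G(B_GO(1))$ to this three-dimensional algebra with no hidden residual-action or extension contributions; once that input is secured, and given the earlier presentation of $H^*_G(B_GSO(2n+1))$, the rest is routine bookkeeping with the box product (and the same computation simultaneously pins down the surjection $H^*_G(B_GO(2n+1))\to H^*_G(B_GO(2n))$ discussed above and its kernel).
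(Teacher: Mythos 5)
Your proposal is correct and follows essentially the same route as the paper: split $O(2n+1)=SO(2n+1)\times O(1)$, so $B_GO(2n+1)=B_GSO(2n+1)\times B_GO(1)$, apply the $A_{\Q}$-Künneth formula, and use $H^*_G(B_GO(1))=A_{\Q}[\gend]/(\gend^2=\gend,\,x\gend)$ together with the established presentation of $H^*_G(B_GSO(2n+1))$. Your extra details (the fixed-point computation $(B_GO(1))^{C_2}\simeq BO(1)\sqcup BO(1)$ and the remark that $\gend^2=\gend$ must be counted among the relations) only make explicit what the paper leaves implicit.
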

The $H^*_G(B_GO(2n))$ can then be understood as quotients of $H_G^*(B_GO(2n+1))$ (see section \ref{Orthogonal}). The stable case similarly reduces to $B_GSO$ by use of the fact that $B_GO=B_GSO\times B_GO(1)$.

\subsection{Special unitary groups} For $SU(n)$ we have the maximal torus isomorphism equivariantly:
	\begin{prop}	The maximal torus inclusion $U(1)^{n-1}\to SU(n)$ induces an isomorphism 
	\begin{equation}
	H^*_G(B_GSU(n))\to H^*_G(B_GU(1)^{n-1})^{\Sigma_n}
	\end{equation}
\end{prop}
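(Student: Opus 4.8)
The plan is to run the equivariant Borel argument, reducing to classical rational cohomology of classifying spaces via the Greenlees--May decomposition, exactly in the spirit of the proof of Proposition~\ref{C2ChernIso} for $U(n)$. Write $T\subset SU(n)$ for the maximal torus (with its trivial $C_2$-action) and $W=N_{SU(n)}(T)/T\cong\Sigma_n$. Since conjugation is homotopically trivial on $B_GSU(n)$, the torus map $B_GT\to B_GSU(n)$ is $N_{SU(n)}(T)$-invariant up to homotopy, so the induced map on $H^*_G$ factors through $(-)^{\Sigma_n}$; this is the comparison map we must show is an isomorphism. By the second bullet of Section~\ref{Conv} both Green functors decompose, levelwise and naturally (hence $\Sigma_n$-equivariantly), as a ``Borel summand'' built from the cohomology of the underlying space together with a ``geometric fixed-point summand'' built from the cohomology of the $C_2$-fixed points; since $(-)^{\Sigma_n}$ is a finite limit it is computed summandwise and levelwise, and the restriction map is projection onto the Borel summand on both sides. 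It therefore suffices to identify the comparison map on each of the two summands.

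On the Borel summand the comparison map is the classical one $H^*(BSU(n);\Q)\to H^*(BT;\Q)^{\Sigma_n}$; writing $H^*(BT;\Q)=\Q[a_1,\dots,a_n]/(a_1+\cdots+a_n)$ with $\Sigma_n$ permuting the $a_i$, the invariants are $\Q[\sigma_2,\dots,\sigma_n]=H^*(BSU(n);\Q)$, so this is an isomorphism by Borel's theorem. For the geometric summand I would use the standard description $(B_GL)^{C_2}=\coprod_{[\rho]}BC_L(\rho)$, the coproduct over conjugacy classes of homomorphisms $\rho\colon C_2\to L$ (this is the same input used in the $U(n)$ computation). For $L=T$ this is $\coprod_{\epsilon}BT$ indexed by $\epsilon\in\Hom(C_2,T)$, i.e.\ sign vectors $\epsilon\in\{\pm1\}^n$ with evenly many $-1$'s, on which $\Sigma_n$ acts by permutation. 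For $L=SU(n)$, $\rho$ is an involution determined up to conjugacy by the dimension $q$ of its $(-1)$-eigenspace, with $q$ even since $\det\rho=1$, and $C_{SU(n)}(\rho)=S(U(p)\times U(q))$ with $p=n-q$; the torus map sends the component $\epsilon$ to the component $q=\#\{i:\epsilon_i=-1\}$ and realizes $BT\to BS(U(p)\times U(q))$ as the maximal-torus inclusion. The crucial structural facts are that $C_{SU(n)}(\rho)$ is connected (centralizers of elements in the simply connected group $SU(n)$ are connected) with maximal torus $T$, and that its Weyl group equals $\mathrm{Stab}_{\Sigma_n}(\epsilon)=\Sigma_p\times\Sigma_q$, so Borel's theorem again gives $H^*(BS(U(p)\times U(q));\Q)=H^*(BT;\Q)^{\Sigma_p\times\Sigma_q}$.

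Finally I would assemble: $\Sigma_n$ permutes the components $\epsilon$ with orbits indexed by the even weight $q$ and orbit-stabilizers $\Sigma_p\times\Sigma_q$, so the orbit/stabilizer identity gives
\[
\Bigl(\bigoplus_{\epsilon}H^*(BT;\Q)\Bigr)^{\Sigma_n}=\bigoplus_{q\ \mathrm{even}}H^*(BT;\Q)^{\Sigma_p\times\Sigma_q}=\bigoplus_{q\ \mathrm{even}}H^*\bigl(BS(U(p)\times U(q));\Q\bigr)=H^*\bigl((B_GSU(n))^{C_2};\Q\bigr),
\]
and by the previous paragraph the comparison map induces precisely this chain of identifications. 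Combined with the Borel-summand isomorphism and the fact that restrictions are the evident projections, this gives an isomorphism of integer-graded Green functors; extending scalars over $A_{\Q}$ along $H^{\bigstar}_G(X)=H^*_G(X)\boxtimes_{A_{\Q}}H^{\bigstar}_G$ (and using that $(-)^{\Sigma_n}$ commutes with the flat base change $-\boxtimes_{A_{\Q}}H^{\bigstar}_G$, all $A_{\Q}$-modules being projective) upgrades it to the stated $RO(C_2)$-graded isomorphism. I expect the main obstacle to be the compact-Lie input of the third paragraph --- connectedness of the centralizer of an involution in $SU(n)$ and the matching of its Weyl group with the $\Sigma_n$-stabilizer of the sign vector; it is exactly the failure of such statements for disconnected groups that causes the maximal-torus isomorphism to break for $O(n)$. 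A lesser difficulty is the bookkeeping showing the torus map induces a $\Sigma_n$-compatible bijection between orbits of sign vectors and components of $(B_GSU(n))^{C_2}$, but this is the same argument already used in the $U(n)$ case.
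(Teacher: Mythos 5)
Your proposal is correct and follows essentially the same route as the paper: identify $(B_GSU(n))^{C_2}$ as a disjoint union of classifying spaces of the connected groups $S(U(p)\times U(q))$ with $q$ even (the paper's $Z'_{2k,n}$), check that the torus map on fixed points realizes their maximal-torus inclusions, apply nonequivariant Borel to each, and assemble via the orbit/stabilizer argument identifying $\Sigma_n$-invariants of the sign-indexed sum with $\Sigma_p\times\Sigma_q$-invariants componentwise. The only cosmetic difference is that you obtain the fixed points via the general centralizer-of-involutions description and Steinberg-type connectedness, whereas the paper computes the fixed points and the structure of $Z'_{2k,n}$ directly.
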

We prove that for any $n$, the inclusion induced map
\begin{equation}
H^*_G(B_GU(n))\to H^*_G(B_GSU(n))
\end{equation}
is a surjection, and $\gene_1=\genc_{1,n-1}=0$ in $H^*_G(SU(n))$. There are more relations however; for example, if $n=2$ there is an additional relation $\genu^2=2\genu$ since $SU(2)=Sp(1)$.

In the stable case, we can distinguish between $B_G^+SU, B_G^-SU$ and $B_G^{\pm}SU$ and we have $\gene_1=\genc_1=0$.

	\section{\texorpdfstring{$C_2$ Chern classes}{C2 Chern classes}}\label{C2ChernSection}	
	
	The goal of this section is to prove our results on Chern classes. In effect, we need to prove the isomorphism:		
	\begin{gather}
	H^*_G(B_GU(n))=	(H^*_G(B_GU(1))^{\otimes n})^{\Sigma_n}
	\end{gather}
	and then use the computation of $H^*_G(B_GU(1))$ to obtain the algebraic description in Proposition \ref{C2Chern2} and Proposition \ref{C2ChernIso}.
	
	\subsection{\texorpdfstring{The $n=1$ computation}{The n=1 computation}}\label{n=1Comp}For any $C_2$-space $X$, by \cite{GM95},
	\begin{equation}
	H^*_{C_2}(X)=H^*(X)^{C_2}\oplus H^*(X^{C_2})
	\end{equation}
	Now take $X=B_GS^1$ which is $\C P^{\infty}$ with the $C_2$ action given on complex homogeneous coordinates by:
	\begin{equation}
	g(z_0:z_1:z_2:z_3:\cdots)=(z_0:-z_1:z_2:-z_3:\cdots)
	\end{equation}
	We have $H^*(\C P^{\infty})=\Q[r]$ for a generator $r$ of degree $2$. The $C_2$ action is trivial as can be verified on the $2$-skeleton $S^2=\C P^1\subseteq \C P^{\infty}$: the $C_2$ action on $S^2$ is a rotation hence has degree $1$.
	
	We also have $(B_GS^1)^{C_2}=\C P^{\infty}\coprod \C P^{\infty}$ spanned by $v^+=(z_0:0:z_2:0:\cdots)$ and $v^-=(0:z_1:0:z_3:\cdots)$ respectively. Thus we get
	\begin{equation}
	H^*_{C_2}(B_GS^`)=H^*(BS^1)\oplus H^*((B_GS^1)^{C_2})=\Q[\geneidem_1]\oplus \Q[\geneidem_2]\oplus \Q[\geneidem_3]
	\end{equation}
	where $e_1$ is the nonequivariant generator and $\geneidem_2,\geneidem_3$ correspond to $v^+,v^-$ respectively. 

	Define $\genu=\geneidem_2^0$ and $\genw=\geneidem_1+\geneidem_2+\geneidem_3$; then
	\begin{gather}
	\geneidem_1^0=x/2\text{ , }\geneidem_2^0=\genu\text{ , }\geneidem_3^0=y-\genu\\
	\geneidem_1=x\genw/2\text{ , } \geneidem_2=\genw\genu\text{ , }\geneidem_3=(y-\genu)\genw
	\end{gather}
	
	We have proven:	
	\begin{prop}\label{C2Chern1Class}As an algebra over $A_{\Q}$:
		\begin{equation}
		H_G^*(B_{C_2}S^1)=\frac{A_{\Q}[\genw,\genu]}{\genu^2=\genu, x\genu=0}
		\end{equation}
		for $|\genw|=2$ and $|\genu|=0$.
	\end{prop}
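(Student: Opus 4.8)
The plan is to compute $H^*_{C_2}(B_{C_2}S^1)$ via the Greenlees--May splitting $H^*_{C_2}(X) = H^*(X)^{C_2} \oplus H^*(X^{C_2})$ and then identify the resulting ring with the claimed presentation $A_{\Q}[\genw,\genu]/(\genu^2=\genu, x\genu=0)$. The first step is already carried out in the excerpt: with $X = B_{C_2}S^1 = \C P^\infty$ equipped with the given $C_2$-action, one observes $H^*(X)^{C_2} = H^*(\C P^\infty) = \Q[\geneidem_1]$ since the action is trivial (checked on the $2$-skeleton $S^2 = \C P^1$, where the action is a rotation, hence degree $1$, hence trivial on $H^2$), and $X^{C_2} = \C P^\infty \amalg \C P^\infty$ so that $H^*(X^{C_2}) = \Q[\geneidem_2] \oplus \Q[\geneidem_3]$. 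Thus $H^*_{C_2}(B_{C_2}S^1) = \Q[\geneidem_1] \oplus \Q[\geneidem_2] \oplus \Q[\geneidem_3]$ as a ring, the sum being a product of rings (orthogonal idempotents $\geneidem_1^0, \geneidem_2^0, \geneidem_3^0$), where I record that this is the $C_2/C_2$-level and the restriction map to the $C_2/e$-level is projection onto the $\Q[\geneidem_1]$ factor (the bottom level being $H^*(\C P^\infty) = \Q[\geneidem_1]$).

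Next I would verify that the proposed generators $\genu = \geneidem_2^0$ and $\genw = \geneidem_1 + \geneidem_2 + \geneidem_3$ generate this $A_{\Q}$-algebra and satisfy exactly the stated relations. The key computational input is the $A_{\Q}$-module structure: the element $x = \Tr(1)$ acts on the top level as $\Tr \circ \Res$, and since $\Res$ is projection to $\Q[\geneidem_1]$ while $\Tr(\geneidem_1^k) = \geneidem_1^k + g\cdot(\text{its lift})$; concretely the decomposition of $A_{\Q} = A_{\Q}\{x/2\} \oplus A_{\Q}\{y\}$ matches $\geneidem_1^0 = x/2$ and the idempotent $y$ splitting off $\geneidem_2^0 + \geneidem_3^0$. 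This gives $\geneidem_1^0 = x/2$, $\geneidem_2^0 = \genu$, $\geneidem_3^0 = y - \genu$, from which $x\genu = x\geneidem_2^0 = 0$ (as $\geneidem_2^0$ lies in the $y$-summand where $x$ acts as $0$) and $\genu^2 = (\geneidem_2^0)^2 = \geneidem_2^0 = \genu$ (idempotent). Multiplying out, $\geneidem_1 = (x/2)\genw$, $\geneidem_2 = \genu\genw$, $\geneidem_3 = (y-\genu)\genw$, so every monomial $\geneidem_i^k$ for $k \ge 1$ is a polynomial in $\genu, \genw$ with $A_{\Q}$-coefficients, proving that $\genu, \genw$ generate.

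Finally I would check that there are no further relations, i.e. that the natural surjection $A_{\Q}[\genw,\genu]/(\genu^2-\genu, x\genu) \twoheadrightarrow H^*_{C_2}(B_{C_2}S^1)$ is injective. The quotient ring on the left decomposes as a product using the orthogonal idempotents $x/2$, $\genu$, $y - \genu$ (note $x/2 + \genu + (y-\genu) = x/2 + y = 1$ in $A_{\Q}[\dots]$, using $x/2 + y = 1$ in $A_{\Q}$, and pairwise products vanish using $x\genu=0$, $\genu^2 = \genu$, $x(x/2) = x$): the $x/2$-component is $\Q[\genw] \cdot x/2$, and each of the $\genu$- and $(y-\genu)$-components is $\Q[\genw]$, so the left-hand ring has the same graded $\Q$-dimensions in each degree as the right-hand ring $\Q[\geneidem_1] \oplus \Q[\geneidem_2] \oplus \Q[\geneidem_3]$. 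A dimension count in each degree then forces the surjection to be an isomorphism. The main obstacle is bookkeeping the $A_{\Q}$-module structure correctly — specifically pinning down that $\geneidem_2^0$ and $\geneidem_3^0$ span the $y$-isotypic summand while $\geneidem_1^0$ spans the $x$-summand, and hence that $x$ annihilates $\genu$ — but this is exactly dictated by the Greenlees--May idempotent decomposition and the identification of $\Res$ with projection to the nonequivariant factor.
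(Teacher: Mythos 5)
Your proposal is correct and follows essentially the same route as the paper: the Greenlees--May splitting $H^*_{C_2}(X)=H^*(X)^{C_2}\oplus H^*(X^{C_2})$ applied to $\C P^\infty$ with its two-component fixed set, the same generators $\genu=\geneidem_2^0$, $\genw=\geneidem_1+\geneidem_2+\geneidem_3$, and the same idempotent identifications $\geneidem_1^0=x/2$, $\geneidem_2^0=\genu$, $\geneidem_3^0=y-\genu$. Your added step checking injectivity by decomposing $A_{\Q}[\genw,\genu]/(\genu^2-\genu,x\genu)$ along the orthogonal idempotents $x/2,\genu,y-\genu$ and counting dimensions is a correct way of making explicit what the paper leaves implicit.
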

	
	We should compare this with the description $$H_G^{\bigstar}(B_GS^1)=H^{\bigstar}_G[c,b]/(c^2=a_{\sigma}^2c+u_{2\sigma}b)$$ obtained in \cite{Shu14}. The correspondence of generators is:
	\begin{gather}
	\genu=c\frac{y}{a_{\sigma}^2}\\
	\genw=c\frac{x}{2u_{2\sigma}}+b\frac{y}{a_{\sigma}^2}\\
	c=\genw u_{2\sigma}+\genu a_{\sigma}^2\\
	b=-\genw^2u_{2\sigma}+\genw a_{\sigma}^2
	\end{gather}
	
	\subsection{Maximal tori and Weyl groups}
	
	If $L$ is a compact connected Lie group and $T$ a maximal torus in $L$, we have the inclusion-induced map
	\begin{equation}
	H_G^*(B_GL)\to H_G^*(B_GT)
	\end{equation}
	The Weyl group $W=W_LT$ acts on $L, T$ by conjugation hence on $H^*(B_GT), H^*(B_GL)$ and the inclusion-induced map is $W$-equivariant. Actually, $W$ acts trivially on $B_GL$, which is a special case of the fact that an inner automorphism of $H$ induces the identity map on $B_GH$ up to homotopy (see \cite{BCM} for the classical nonequivariant case; the equivariant generalization is straightforward). Thus our map factors through the $W$-fixed points:
	\begin{equation}
	H_G^*(B_GL)\to H_G^*(B_GT)^W
	\end{equation}
	This breaks into:
	\begin{gather}
	H^*(BL)\to H^*(BT)^W\\
	H^*((B_GL)^{C_2})\to H^*((B_GT)^{C_2})^W
	\end{gather}
	The first map is an isomorphism (\cite{BCM}), so if we can prove that the second map is an isomorphism then 
	\begin{equation}
	H^*_G(B_GL)\to H^*_G(B_GT)^W
	\end{equation}
	will also be an isomorphism.

	If $T=\prod^nS^1$ then
	\begin{equation}
	(B_GT)^{C_2}=\coprod^{2^n}\prod^nBS^1  \end{equation}
	The coproduct is indexed over $C_2^n$ i.e. sign configurations $(\pm,...,\pm)$ (the next subsection explains why it's natural to use sign configurations). By considering the number $m$ of $+$'s in a configuration, we can further break this into
	\begin{equation}
	(B_GT)^{C_2}=\coprod_{m=0}^n\coprod^{\binom nm}\prod^nBS^1  \end{equation}
	
	In cohomology:
	\begin{equation}
	H^*((B_GT)^{C_2})=\oplus_{m=0}^n\oplus^{\binom nm}H^*(BS^1)^{\otimes n}  \end{equation}
	
	The $\Sigma_n$ action permuting the $S^1$ factors in $T$ has the effect of preserving the $m$, permuting the $\binom nm$ many sign configurations and permuting the factors in the tensor product.
	
	\subsection{The maximal torus isomorphism}
	
	We use the Grassmannian model $Gr(n,\C^{\infty\rho})$ for $B_GU(n)$, that consists of $n$-dimensional (complex) subspaces of $\C^{\infty\rho}$ where $\rho=1+\sigma$ is the complex regular representation of $G$. We have:
	\begin{equation} 
	B_GU(n)^{C_2}=\coprod_{m=0}^nBU(m)\times BU(n-m)
	\end{equation} 
	Indeed, a fixed point $V$ is a $C_2$ subspace and thus admits a unique decomposition $V=V^+\oplus V^-$ where $gv=v$ for any $v\in V^+$ and $gv=-v$ for any $v\in V^-$. So a subspace $V$ in the LHS corresponds to the pair $(V^+,V^-)$ in the RHS.
	
	We use the maximal torus $T=\prod^nS^1$ in $U(n)$. The Weyl group is $\Sigma_n$ and the action is by permuting the $S^1$ factors.  
	\begin{prop}The maximal torus inclusion induces an isomorphism:
		\begin{equation}
		H_G^*(B_GU(n))= H_G^*(B_GT)^{\Sigma_n}
		\end{equation}
	\end{prop}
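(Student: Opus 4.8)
The plan is to deduce the theorem from the Greenlees--May decomposition $H^*_G(Y)=H^*(Y)^{C_2}\oplus H^*(Y^{C_2})$ \cite{GM95} together with Borel's classical theorem $H^*(BL)=H^*(BT)^{W_LT}$ \cite{BCM}. Since this decomposition is natural in the $C_2$-space $Y$, a $C_2$-map $f\colon Y\to Z$ induces on $H^*_G$ the direct sum of $f^*\colon H^*(Z)^{C_2}\to H^*(Y)^{C_2}$ and $(f^{C_2})^*\colon H^*(Z^{C_2})\to H^*(Y^{C_2})$; and because the map of Green functors is induced by an honest $C_2$-map it automatically commutes with restriction, transfer and the Weyl action, so it is an isomorphism of Green functors as soon as it is an isomorphism on both of these summands. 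The $\Sigma_n$-action permuting the $S^1$-factors of $T$ commutes with the $C_2$-action and acts trivially (up to homotopy) on $B_GU(n)$, so the inclusion-induced map factors through $\Sigma_n$-invariants and, compatibly with the decomposition, splits as
\[
H^*(BU(n))\longrightarrow H^*(BT)^{\Sigma_n},\qquad H^*\!\big((B_GU(n))^{C_2}\big)\longrightarrow H^*\!\big((B_GT)^{C_2}\big)^{\Sigma_n}.
\]
The first arrow is the Borel isomorphism, so the whole theorem reduces to showing that the second arrow is an isomorphism.

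For this I would feed in the explicit models recorded above,
\[
(B_GU(n))^{C_2}=\coprod_{m=0}^{n}BU(m)\times BU(n-m),\qquad (B_GT)^{C_2}=\coprod_{m=0}^{n}\ \coprod_{|S|=m}\ \prod^{n}BS^1,
\]
the inner coproduct running over sign configurations $S\subseteq\{1,\dots,n\}$. The point to verify is that the map $(B_GT)^{C_2}\to (B_GU(n))^{C_2}$ carries the component indexed by a size-$m$ configuration $S$ into $BU(m)\times BU(n-m)$, assembling the factors indexed by $S$ into the $V^+$ summand (on which $C_2$ acts trivially) via the standard map $\prod^{m}BS^1\to BU(m)$, and the remaining factors into $V^-$ via $\prod^{n-m}BS^1\to BU(n-m)$. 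Granting this, the $\Sigma_n$-action preserves $m$, permutes the $\binom{n}{m}$ configurations of size $m$ transitively with stabilizer $\Sigma_m\times\Sigma_{n-m}$, and simultaneously permutes the generators $r_j$, so on cohomology the $m$-th summand $\bigoplus_{|S|=m}\Q[r_1,\dots,r_n]$ is the induced module $\Ind_{\Sigma_m\times\Sigma_{n-m}}^{\Sigma_n}\Q[r_1,\dots,r_n]$, with $\Sigma_m\times\Sigma_{n-m}$ permuting $r_1,\dots,r_m$ and $r_{m+1},\dots,r_n$ separately.

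Now $(\Ind_H^{\Sigma_n}M)^{\Sigma_n}=M^{H}$, so taking $\Sigma_n$-invariants summand by summand gives
\[
H^*\!\big((B_GT)^{C_2}\big)^{\Sigma_n}=\bigoplus_{m=0}^{n}\Q[r_1,\dots,r_m]^{\Sigma_m}\otimes\Q[r_{m+1},\dots,r_n]^{\Sigma_{n-m}},
\]
and the second arrow above is, on the $m$-th summand, exactly the tensor product of the Borel isomorphisms $H^*(BU(m))\cong\Q[r_1,\dots,r_m]^{\Sigma_m}$ sending $c_i$ to $\sigma_i(r_1,\dots,r_m)$, hence an isomorphism. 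Summing over $m$ finishes the fixed-point step, and hence the theorem; the algebraic consequences (Propositions~\ref{C2Chern2} and \ref{C2ChernIso}) then follow by substituting the $n=1$ computation of Proposition~\ref{C2Chern1Class}.

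The main obstacle I anticipate is not any individual computation but the bookkeeping in the middle step: checking that the inclusion-induced map on $C_2$-fixed points genuinely respects the splitting by $m$ and, configuration by configuration, agrees with the product of the maximal-torus inclusions for $U(m)$ and $U(n-m)$ — i.e.\ correctly matching sign configurations of the torus $T$ with the eigenbundle decomposition $V=V^+\oplus V^-$ in the Grassmannian model of $B_GU(n)$ — and that the $\Sigma_n$-action is compatible under this matching, so that the induced-module description is valid.
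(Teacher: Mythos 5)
Your proposal is correct and takes essentially the same route as the paper: reduce via the Greenlees--May decomposition to the map on $C_2$-fixed points, use the Grassmannian model to match sign configurations of the torus with the eigenbundle decomposition $V=V^+\oplus V^-$, and apply the classical Borel isomorphism summand by summand after noting that $\Sigma_n$-invariants of the configuration sum equal $\Sigma_m\times\Sigma_{n-m}$-invariants of a single fixed configuration. The bookkeeping you flag as the main obstacle is precisely the check the paper carries out (the fixed-point map sends $(v_1^{\pm},\dots,v_n^{\pm})$ to the direct sum, with $m$ the number of $+$ signs), and it goes through as you anticipate.
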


	\begin{proof}The map $B_GT\to B_GU(n)$ is on the $C_2$-fixed points:
		\begin{equation} 
		\coprod^{2^n}BU(1)^n\to \coprod_{m=0}^nBU(m)\times BU(n-m)
		\end{equation} 
		sending $(v_1^{\pm},...,v_n^{\pm})$ to the direct sum $v_1^{\pm}\oplus ...\oplus v_n^{\pm}$ (we are implicitly using the identification $Gr_n(\C^{\infty \rho}\oplus \C^{\infty \rho})=Gr_n(\C^{\infty \rho})$ through a fixed linear equivariant isomorphism $\C^{\infty \rho}\oplus \C^{\infty \rho}=\C^{\infty\rho}$). Here, $v_i^+$ denotes a $1$-dimensional subspace with trivial $C_2$ action, while $v_i^-$ is a $1$-dimensional subspace with antipodal $C_2$ action. The signs in  $(v_1^{\pm},...,v_n^{\pm})$ correspond to the sign configuration and the index $m$ on the RHS corresponds to the amount of $+$ signs in a configuration. Thus, the map above breaks into
		\begin{equation} 
		\coprod^{\binom nm}BU(1)^n\to BU(m)\times BU(n-m)
		\end{equation} 		
		for every $m=0,...,n$. Fixing the $m$, the induced map on cohomology is
		\begin{equation} 
		H^*(BU(m))\otimes H^*(BU(n-m))\to \oplus^{\binom nm} H^*(BU(1))^{\otimes n}
		\end{equation} 
		The action of $\Sigma_n$ on the right permutes the factors in the tensor product and the sign configuration. Taking $\Sigma_n$ fixed points is equivalent to fixing our favorite configuration, say $(+,...,+,-,...,-)$, and then taking $\Sigma_m\times \Sigma_{n-m}$ fixed points, where $\Sigma_m$ permutes only the $+$'s and $\Sigma_{n-m}$ permutes only the $-$'s.
		With that in mind, our map factors through $\Sigma_n$ fixed points and we get
		\begin{gather} 
		H^*(BU(m))\otimes H^*(BU(n-m))\to (H^*(BU(1))^{\otimes m})^{\Sigma_m}\otimes (H^*(BU(1))^{\otimes (n-m)})^{\Sigma_{n-m}}
		\end{gather} 
		This is the tensor product of maps $H^*(BU(i))\to (H^*(BU(1))^{\otimes i})^{\Sigma_i}$ where $i=m,n-m$. These maps are induced by the maximal torus inclusions $U(1)^i\to U(i)$, hence are isomorphisms by the nonequivariant case.
	\end{proof}

	\subsection{The fixed point computation}\label{TheAlgebraPart} 
	
The cohomology of the maximal torus is: \begin{equation}
H^*_G(B_GT)= H^*_G(\prod_nB_GS^1)=\frac{A_{\Q}[\genw_i,\genu_i]_{1\le i\le n}}{x\genu_i=0, \genu_i^2=\genu_i}
\end{equation}
with the $\Sigma_n$ action permuting the $\genw_i$ and $\genu_i$ separately (namely $\sigma \genw_i=\genw_{\sigma(i)}$ and $\sigma \genu_i=\genu_{\sigma(i)}$ for $\sigma\in \Sigma_n$). In this subsection, we summarize the computation of the $\Sigma_n$-fixed points:
\begin{equation}H^*_G(B_GU(n))=(H_G^*(B_GS^1)^{\otimes n})^{\Sigma_n}=\Big(\frac{A_{\Q}[\genw_i,\genu_i]}{x\genu_i=0, \genu_i^2=\genu_i}\Big)^{\Sigma_n}
\end{equation}
that is proven in a more general form in Appendix \ref{appen} (see Propositions \ref{AlgebraPropositionGeneral} and \ref{AlgebraProposition2General}). We have:
	\begin{cor}\label{AlgebraCorollaryQ}For $1\le i,s\le n$ and $1\le j\le n-s$ consider the elements of the $A_{\Q}$-algebra:
		\begin{equation}H_G^*(B_GS^1)^{\otimes n}=\frac{A_{\Q}[\genw_i,\genu_i]}{x\genu_i=0, \genu_i^2=\genu_i}	\end{equation}
		given by:
	\begin{gather}
	\genu=\sigma_1(\genu_1,...,\genu_n)=\sum_{1\le m\le n}\genu_m\\
	\gene_i=\sigma_i(\genw_1,...,\genw_n)=\sum_{m_*\in K_i}\genw_{m_1}\cdots \genw_{m_i}\\
	\genc_{s,j}=\sum_{(m_*,l_*)\in K_{s,j}}\genw_{m_1}\cdots \genw_{m_s}\genu_{l_1}\cdots \genu_{l_j}
	\end{gather}
	where $K_i$ consists of partitions $1\le m_1<\cdots<m_i\le n$ , $K_{s,j}\subseteq K_s\times K_j$ consists of pairs of disjoint partitions and $\sigma_i$ is the $i$-th elementary symmetric polynomial.\\
	Then:
	\begin{equation}
H^*_G(B_GU(n))=\frac{A_{\Q}[\genu,\gene_i,\genc_{s,j}]}{x\genu=0, x\genc_{s,j}=0, S}
	\end{equation}
	where the finite set $S$ of relations consists of three types of relations:
	\begin{itemize}
		\item  Type I: 
		$$\genu^{n+1}=\sum_{m=1}^nr_m\genu^{m}$$
		where $r_1=(-1)^nn!$ and
		\begin{equation}
		r_{m+1}=(-1)^{n+m}n!\sum_{1\le i_1<\cdots<i_m\le n}\frac1{i_1\cdots i_m}
		\end{equation}
		We can also write this relation as
		\begin{equation}
		\genu(\genu-1)\cdots (\genu-n)=0
		\end{equation}
		\item Type II:
		\begin{equation}
		\genu^s\genc_{s,i}=\frac{s!}{(s+i)!}\gene_s\genu^{s+i}+\cdots
		\end{equation} 
		where $\cdots$ denotes a homogeneous polynomial smaller than $\gene_s\genu^{s+i}$ (see Appendix \ref{appen} for a definition of this order).
		\item Type III: If  $s\le t\le s+i$,
		$$\genc_{s,i}\genc_{t,j}=\binom{\min(i+j+s,n)-t}{j}\gene_t\genc_{s,\min(i+j,n-s)}+\cdots$$
		where $\cdots$ denotes a homogeneous polynomial smaller than $\gene_t\genc_{s,\min(i+j,n-s)}$.
	\end{itemize}
	The polynomials $\cdots$ can be algorithmically computed in terms of $\genu,\gene_i,\genc_{s,j}$; the algorithm is described in Appendix \ref{appen} and has been implemented in the computer program found \href{https://github.com/NickG-Math/Symmetric_Polynomials}{here}.
	
	The elements $\gene_i$ are algebraically independent and $H^*_G(B_GU(n))$ is finite over $A_{\Q}[\gene_1,...,\gene_n]$ hence has Krull dimension $n$.
	
	A basis of $H^*_G(B_GU(n))$ over $A_{\Q}$ consists of the elements
	\begin{equation}
	\genu^a\prod_{i=1}^n \gene_i^{k_i}\prod_{s=1}^n\prod_{i=1}^{n-s} \genc_{s,i}^{\epsilon_{s,i}}
	\end{equation}
	where $0\le a\le n$ and $\epsilon_{s,i}=0,1$ are such that for any $\genc_{s,i}$ appearing in the product we must have $a<s$ and for any two factors $\genc_{s,i},\genc_{t,j}$ with $s\le t$ we must have $s+i<t$.\end{cor}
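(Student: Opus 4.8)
The plan is to use the maximal torus isomorphism $H^*_G(B_GU(n))=H^*_G(B_GT)^{\Sigma_n}$ established just above, which reduces the corollary to computing the $\Sigma_n$-invariants of
\[
R=\frac{A_{\Q}[\genw_i,\genu_i]_{1\le i\le n}}{x\genu_i=0,\ \genu_i^2=\genu_i}.
\]
It is enough to work at the top ($G/G$) level: $\Res$ is surjective in all cases at hand and, $\Q$ having characteristic $0$, finite group invariants commute with passage to the top level. There I would split $R$ via the orthogonal idempotents $x/2$ and $y=1-x/2$ of $A_{\Q}$, writing $R=R(x/2)\times Ry$. Imposing $x=2$ forces $\genu_i=0$, so $R(x/2)=\Q[\genw_1,\dots,\genw_n]$; imposing $x=0$ leaves the commuting idempotents $\genu_i$, whose joint spectrum identifies $Ry=\prod_{S\subseteq\{1,\dots,n\}}\Q[\genw_1,\dots,\genw_n]$. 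Taking $\Sigma_n$-invariants and grouping the factors of $Ry$ by the size $m=|S|$ of the (permuted) indexing set gives
\[
R^{\Sigma_n}=\Q[\genw]^{\Sigma_n}\times\prod_{m=0}^{n}\Q[\genw]^{\Sigma_m\times\Sigma_{n-m}},
\]
where in the $m$-th factor $\Sigma_m$ permutes $\genw_1,\dots,\genw_m$ and $\Sigma_{n-m}$ permutes $\genw_{m+1},\dots,\genw_n$. Under this identification one checks directly that $\gene_i$ equals $\sigma_i(\genw)$ in every factor (in $\Q[\genw]^{\Sigma_n}$ it is the ordinary Chern class $c_i$), that $\genu$ is the tuple of scalars $(0;0,1,\dots,n)$, and that $\genc_{s,j}$ vanishes in $\Q[\genw]^{\Sigma_n}$ while in the $m$-th factor it equals $\sum_{r}\binom{m-r}{j}\,e_r(\genw_{\le m})\,e_{s-r}(\genw_{>m})$, writing $e_k(\genw_{\le m})=\sigma_k(\genw_1,\dots,\genw_m)$ and $e_k(\genw_{>m})=\sigma_k(\genw_{m+1},\dots,\genw_n)$.

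For generation, the $A_{\Q}$-subalgebra generated by $\genu$ contains $x/2$, $y$ and the Lagrange idempotents of $\Q[\genu]$, which together separate all $n+2$ factors of $R^{\Sigma_n}$; hence it suffices to show that for each $m$ the elements $\{\sigma_i(\genw)\}\cup\{\genc_{s,j}|_m\}$ generate the polynomial ring $\Q[\genw]^{\Sigma_m\times\Sigma_{n-m}}=\Q[e_1(\genw_{\le m}),\dots,e_m(\genw_{\le m}),e_1(\genw_{>m}),\dots,e_{n-m}(\genw_{>m})]$ (the factor $\Q[\genw]^{\Sigma_n}$ being handled classically by the $\gene_i$). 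I would argue by induction on the symmetric-function degree $s$: once the partial symmetric functions of degree $<s$ are in hand, both
\[
\sigma_s(\genw)=e_s(\genw_{\le m})+e_s(\genw_{>m})+(\text{known lower terms})
\]
and
\[
\genc_{s,1}|_m=(m-s)\,e_s(\genw_{\le m})+m\,e_s(\genw_{>m})+(\text{known lower terms})
\]
are, modulo already-constructed terms, linear in the unknowns $e_s(\genw_{\le m}),e_s(\genw_{>m})$ with determinant $(m-s)-m=-s\ne0$ (the degenerate cases where one unknown is a priori zero, and the ranges $m\in\{0,n\}$, being trivial, while the top functions $e_n=\sigma_n(\genw)$ are supplied directly by $\gene_n$). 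Thus $\genu,\gene_i,\genc_{s,j}$ generate $R^{\Sigma_n}$ over $A_{\Q}$.

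The relations involving $x$ are immediate: $\genu$ and all $\genc_{s,j}$ lie in $Ry$, on which $x$ acts as $0$, so $x\genu=x\genc_{s,j}=0$; and $\genu$ takes only the values $0,1,\dots,n$ across the factors of $R^{\Sigma_n}$, so $\genu(\genu-1)\cdots(\genu-n)=0$ (Type I) holds and is of minimal degree, with the $r_m$ the coefficients of the interpolating polynomial. The Type II and Type III relations, and the claim that these together with $x\genu,x\genc_{s,j}$ generate the ideal of all relations --- equivalently, that the displayed monomials $\genu^a\prod\gene_i^{k_i}\prod\genc_{s,i}^{\epsilon_{s,i}}$ form an $A_{\Q}$-basis --- constitute the combinatorial heart of the statement, and this is where I would invoke the self-contained results of Appendix \ref{appen} (Propositions \ref{AlgebraPropositionGeneral} and \ref{AlgebraProposition2General}). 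The strategy there is to fix the stated monomial order, use the multiplication formulas in the component decomposition to pin down the leading terms of $\genu^s\genc_{s,i}$ and $\genc_{s,i}\genc_{t,j}$ as $\frac{s!}{(s+i)!}\gene_s\genu^{s+i}$ and $\binom{\min(i+j+s,n)-t}{j}\gene_t\genc_{s,\min(i+j,n-s)}$ respectively, verify that the resulting rewriting rules reduce every monomial to a $\Q[\gene_1,\dots,\gene_n]$-combination of the claimed standard monomials, and then confirm via a Hilbert-series comparison against $\dim_{\Q}\Q[\genw]^{\Sigma_n}_{2d}+\sum_{m=0}^n\dim_{\Q}\Q[\genw]^{\Sigma_m\times\Sigma_{n-m}}_{2d}$ --- keeping in mind that a pure-Chern standard monomial spans a free $A_{\Q}$ of rank $1$ while every standard monomial containing a $\genu$ or a $\genc_{s,j}$ spans only $A_{\Q}/(x)\cong\Q$ --- that there are no further linear dependences. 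I expect this last step, pinning down the explicit lower-order corrections hidden in the ``$\cdots$'' and proving linear independence of the standard monomials, to be the main obstacle; this is precisely why it is isolated into the appendix.

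Finally, the remaining assertions are formal. The $\gene_i$ restrict in the first factor to the ordinary Chern classes $\sigma_i(\genw)$, which are algebraically independent, so the $\gene_i$ are. Each factor $\Q[\genw]^{\Sigma_m\times\Sigma_{n-m}}$ is a finite --- indeed free of rank $\binom nm$ --- module over $\Q[\genw]^{\Sigma_n}=\Q[\gene_1,\dots,\gene_n]$ by classical invariant theory, and $\Q[\genw]^{\Sigma_n}$ is free of rank $1$ over itself, so $R^{\Sigma_n}=H^*_G(B_GU(n))$ is a finite module over $A_{\Q}[\gene_1,\dots,\gene_n]$. Since $A_{\Q}$ is Artinian, $A_{\Q}[\gene_1,\dots,\gene_n]$ has Krull dimension $n$, and therefore so does $H^*_G(B_GU(n))$.
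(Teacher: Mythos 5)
Your proposal is correct and reaches the corollary, but it organizes the work differently from the paper for the ``soft'' half while coinciding with it on the hard half. The paper treats the corollary as a direct specialization of the self-contained Appendix~\ref{appen}: Propositions~\ref{AlgebraPropositionGeneral} and \ref{AlgebraProposition2General} deliver generation, the Type I--III relations with their leading coefficients, the monomial basis, finiteness over $\Q[\gene_1,\dots,\gene_n]$, and minimality all at once via the dominant-term (lexicographic) algorithm over an arbitrary base ring, after which one only substitutes $\genu$ for the $\genc_{0,i}$ using $\genc_{0,i}=\genu(\genu-1)\cdots(\genu-i+1)/i!$. You instead split the top level by the idempotents $x/2$ and $y$ and by the joint spectrum of the $\genu_i$, getting $R^{\Sigma_n}=\Q[\genw]^{\Sigma_n}\times\prod_{m=0}^{n}\Q[\genw]^{\Sigma_m\times\Sigma_{n-m}}$ (the algebraic shadow of the fixed-point decomposition the paper uses geometrically), identify the components of $\genu,\gene_i,\genc_{s,j}$ explicitly (your formula $\genc_{s,j}\mapsto\sum_r\binom{m-r}{j}e_r(\genw_{\le m})e_{s-r}(\genw_{>m})$ is right), and prove generation by Lagrange interpolation in $\genu$ plus the $2\times 2$ determinant induction on $\sigma_s(\genw)$ and $\genc_{s,1}$; the $x$-relations and Type I also drop out componentwise. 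This buys a transparent, characteristic-zero proof of generation and of the scalar values of $\genu$ that the paper never writes down separately, whereas the appendix's method works over any $k$ and, crucially, is what actually produces the leading coefficients in Types II and III, the completeness of the relation set, the stated $A_{\Q}$-basis, and minimality --- and for exactly these points you, like the paper, invoke the appendix, so the combinatorial heart is shared rather than replaced. One small inaccuracy: the appendix does not finish by a Hilbert-series comparison as you suggest, but by triangularity --- every $\Sigma_n$-orbit-greatest monomial is the dominant term of a unique admissible product of the $\genc_{s,i}$ --- which yields the basis and independence directly; since you cite rather than reprove it, this does not affect the validity of your argument.
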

	
		\begin{cor}\label{AlgebraCorollary2Q}Any set of homogeneous algebra generators of $H^*_G(B_GU(n))$ over $A_{\Q}$ has cardinality at least $1+n+\binom n2$, which is the cardinality of the generating set $\{\genu,\gene_s,\genc_{s,j}\}$.
	\end{cor}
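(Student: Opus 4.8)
The statement has two parts. The cardinality of $\{\genu,\gene_s,\genc_{s,j}\}$ is a direct count: one class $\genu$, the $n$ classes $\gene_1,\dots,\gene_n$, and for each $1\le s<n$ the $n-s$ classes $\genc_{s,1},\dots,\genc_{s,n-s}$, giving $\sum_{s=1}^{n-1}(n-s)=\binom n2$ of them, hence $1+n+\binom n2$ in total. So the content is the lower bound, and the plan is to obtain it degree by degree through a count of indecomposables, using that $H^*_G(B_GU(n))$ is concentrated in even degrees.

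Write $R=H^*_G(B_GU(n))$ and let $R_0$ be its degree-$0$ part. The one point that needs care is that $R$ is not graded-connected over $A_{\Q}$: by Corollary \ref{AlgebraCorollaryQ} (the type I relation together with $x\genu=0$) we have $R_0=A_{\Q}[\genu]/(x\genu,\ \genu(\genu-1)\cdots(\genu-n))$, which strictly contains $A_{\Q}$ but is generated over it by the single element $\genu$; so any homogeneous generating set of $R$ must contain at least one element of degree $0$, contributing the summand $1$ and forcing (a translate of) $\genu$ into the set. For $s\ge 1$ let $D_{2s}\subseteq R_{2s}$ be the $R_0$-submodule generated by all products $R_{2i}\cdot R_{2s-2i}$ with $0<i<s$. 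If $T$ is a homogeneous generating set, then in degree $2s$ the subalgebra it generates equals the $R_0$-span of the degree-$2s$ part of $T$ together with $D_{2s}$; for this to be $R_{2s}$ the degree-$2s$ part of $T$ must map onto a generating set of the $R_0$-module $R_{2s}/D_{2s}$, so the number of degree-$2s$ elements of $T$ is at least $\mu_{R_0}(R_{2s}/D_{2s})$. Thus the whole bound reduces to computing these numbers.

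It then remains to identify $\mu_{R_0}(R_{2s}/D_{2s})$ with the number of members of $\{\gene_s,\genc_{s,j}\}$ of degree $2s$ — that is, $n-s+1$ for $1\le s<n$ and $1$ for $s=n$ — and to check that these very classes realize it, after which summing $1+\sum_{s=1}^{n-1}(n-s+1)+1=1+n+\binom n2$ finishes the proof. I would do this from the explicit $A_{\Q}$-basis of $R$ in Corollary \ref{AlgebraCorollaryQ}: a basis monomial $\genu^{a}\prod\gene_i^{k_i}\prod\genc_{s,i}^{\epsilon_{s,i}}$ of degree $2s$ is either a product of two or more positive-degree generators, hence lies in $D_{2s}$, or is one of $\genu^{a}\gene_s$ or $\genu^{a}\genc_{s,j}$; the admissibility constraints on the basis (notably $a<s$ whenever a $\genc_{s,i}$ occurs) then pin down $R_{2s}/D_{2s}$ as an $R_0$-module and exhibit $\gene_s,\genc_{s,1},\dots,\genc_{s,n-s}$ as a minimal $R_0$-generating set. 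The main obstacle is exactly the precise control of $D_{2s}$, i.e. determining which products of lower-degree classes land in degree $2s$: this is dictated by the type I, II, III relations $S$ and is precisely what the symmetric-polynomial analysis of Appendix \ref{appen} supplies. A secondary subtlety is that the count must be carried out at the level of $R_0$-modules (equivalently, localizing at the minimal primes of $R_0$) rather than of $\Q$-vector spaces, since otherwise a single cleverly chosen mixed element could appear to replace several of the $\genc_{s,j}$.
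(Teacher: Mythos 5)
Your reduction is sound as far as it goes: for a homogeneous generating set $T$, the degree-$2s$ part of $T$ must generate $R_{2s}/D_{2s}$ over $R_0$, so $|T\cap R_{2s}|$ is at least the minimal number of $R_0$-module generators of $R_{2s}/D_{2s}$, and the degree-$0$ part of $T$ must be nonempty. The gap is exactly in the step you defer: the identification of that minimal number with $n-s+1$ is not merely unproved, it fails for $n\ge 3$. The ring $R_0=A_{\Q}[\genu]/(x\genu,\ \genu(\genu-1)\cdots(\genu-n))$ is a product of $n+2$ copies of $\Q$ (the idempotent $x/2$ together with the splitting of the $y$-part at $\genu=0,1,\dots,n$), so the minimal number of generators of any finitely generated $R_0$-module is the maximum of the dimensions of its localizations at these factors. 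Localizing $R_{2s}/D_{2s}$ at the factor belonging to the fixed-point component $BU(m)\times BU(n-m)$ gives the degree-$2s$ indecomposables of $H^*(BU(m))\otimes H^*(BU(n-m))$, which has dimension at most $2$, and the $x/2$-factor contributes at most $1$; hence your per-degree bound is at most $2$, and summing it can never certify more than roughly $1+n+\lfloor n/2\rfloor$ generators, far below $1+n+\binom n2$. Concretely, already for $n=3$ and $s=1$ one has the identity $\genc_{1,2}=(4\genc_{0,3}-\genc_{0,2})\,\gene_1+(\genc_{0,2}-3\genc_{0,3})\,\genc_{1,1}$, where $\genc_{0,i}=\sigma_i(\genu_1,\genu_2,\genu_3)$ is a polynomial in $\genu$; so $R_2$ is generated over $R_0$ by the two elements $\gene_1,\genc_{1,1}$, and your claimed count $n-s+1=3$ fails in the very first degree. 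The worry you voice at the end, that a single mixed element might replace several $\genc_{s,j}$, is precisely what happens, and passing to $R_0$-modules does not forestall it --- it is what enables it, since $R_0$ supplies the idempotent coefficients that produce the cancellation.

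For comparison, the paper does not argue through indecomposables at all: the corollary is deduced from Proposition \ref{AlgebraProposition2General}, whose proof is an exchange argument in which a minimal homogeneous generating set, ordered by degree and then by dominant term in the lexicographic order of Appendix \ref{appen}, has its elements replaced one at a time by the canonical $\genc_{s,j}$ with the same dominant term. So your route is genuinely different, and it cannot be repaired within its own terms, because the invariant it computes (degree-by-degree $R_0$-module generators of the indecomposables) is strictly smaller than the asserted count. Be aware, too, that the displayed identity puts $\genc_{1,2}$ inside the $A_{\Q}$-subalgebra generated by $\genu,\gene_1,\genc_{1,1}$, i.e.\ it deletes one element from the canonical generating set when $n=3$; this shows that any correct treatment of the minimality claim must engage with exactly this idempotent-coefficient phenomenon (which the dominant-term replacement step is meant to control, and which deserves careful scrutiny there as well), rather than with a count of indecomposables over the degree-zero subring.
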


	We can extend $\genc_{s,i}$ to $s=0$ and $i=0$ via: 
\begin{align}
\genc_{s,0}&=y\gene_s=y\sigma_i(\genw_1,...,\genw_n)\\
\genc_{0,i}&=\sigma_i(\genu_1,...,\genu_n)=(i!)^{-1}\genu(\genu-1)\cdots (\genu-i+1)\\
\genc_{0,0}&=y
\end{align}
Whenever we write $\genc_{s,i}$ it is implicit that $s,i>0$, unless we explicitly state that we are using the convention above.

	\subsection{Dimension count}Consider the modified partition function $p(n,m)$ counting sequences $a_1\ge \cdots\ge a_n\ge 0$ with $m=\sum_ia_i$ (the usual partition function requires $a_n\ge 1$). We have the recursion
	\begin{equation}
	p(n,m)=p(n,m-n)+p(n-1,m)
	\end{equation}
	Then $p(n,m)$ is the dimension of the vector space of symmetric polynomials in $\Q[x_1,...,x_n]$ of degree $m$, which we grade as $|x_1|=\cdots=|x_n|=1$. If $R$ is as in subsection \ref{TheAlgebraPart} with $k=\Q$, we have:
	\begin{equation}
	\dim (R^{\Sigma_n}_m)=\sum_{i=0}^n\sum_{j=0}^mp(i,j)p(n-i,m-j)
	\end{equation}
	We can equivalently express these facts as:
	\begin{equation}
	\dim H^{2m}(BU(n))=p(n,m)
	\end{equation}
	and
	\begin{equation}
	\dim H^{2m}_G(B_GU(n))=p(n,m)+\sum_{i=0}^n\sum_{j=0}^mp(i,j)p(n-i,m-j)
	\end{equation}
	Note that for fixed $m$ the dimensions $\dim H^{2m}(BU(n))$ stabilize for large enough $n$. That is not the case for $\dim H^{2m}_GB_GU(n)$.

	\section{\texorpdfstring{$C_2$ Chern classes of sums and tensor products}{C2 Chern classes of sums and tensor products}}\label{C2ChernSumTensor}
	
	\begin{prop}The map $B_GU(n)\to B_GU(n+1)$ given by direct sum with a trivial complex representation induces on cohomology:
		\begin{gather}
		\genu\mapsto y+\genu\\
		\gene_i\mapsto \gene_i\\
		\genc_{s,i}\mapsto \genc_{s,i}+\genc_{s,i-1}
		\end{gather}
		using the convention $\gamma_{s,0}=yc_s$.\\
		The map $B_GU(n)\to B_GU(n+1)$ given by direct sum with a complex $\sigma$ representation induces on cohomology:
		\begin{gather}
		\genu\mapsto \genu\\
		\gene_i\mapsto \gene_i\\
		\genc_{s,i}\mapsto \genc_{s,i}
		\end{gather}
		For both maps we use the conventions that $\gene_{n+1}=0$ and $\genc_{s,n+1-s}=0$ in every RHS.
	\end{prop}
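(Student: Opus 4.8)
The plan is to reduce everything to the maximal torus computation, where the two stabilization maps become transparent. Recall from the maximal torus isomorphism that $H^*_G(B_GU(n)) = (H^*_G(B_GS^1)^{\otimes n})^{\Sigma_n}$ with $H^*_G(B_GS^1) = A_{\Q}[\genw,\genu]/(\genu^2=\genu, x\genu)$, and that $\genu, \gene_i, \genc_{s,i}$ are the symmetric functions in $\genu_m, \genw_m$ described in Corollary \ref{AlgebraCorollaryQ}. A map $B_GU(n)\to B_GU(n+1)$ given by $\oplus V$ for a one-dimensional representation $V$ is compatible with the maximal torus inclusions: the torus of $U(n)$ includes into the torus of $U(n+1)$ by adding a last coordinate, and on classifying spaces this sends the $(n+1)$-st factor $B_GS^1$ to the classifying space of the line $V$. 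So the first step is to identify, under $H^*_G(B_GS^1)\to H^*_G(B_G\{*\}) = A_{\Q}$, the images of $\genw$ and $\genu$ when the target is the point classifying a trivial line versus a $\sigma$-line. For the trivial line, $B_G(\text{trivial }S^1\text{-rep point})$ is just a point with trivial $C_2$-action, whose fixed-point set is a point in the "$+$" component: using the idempotent description $\geneidem_2^0 = \genu$ (the class of $v^+$), we get $\genw\mapsto 0$, $\genu\mapsto y$ (since on a trivial point $\gene_1$ restricts to $0$ but the component idempotent $\genu$ hits $y$, as $x/2 + \genu + (y-\genu) = 1$ forces the trivial-point value to be $y$ when we are in the $+$ component — one should double-check this against the explicit $\geneidem_i$ formulas in subsection \ref{n=1Comp}). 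For the $\sigma$-line, the fixed point lies in the "$-$" component, so $\genw\mapsto 0$ and $\genu\mapsto 0$.

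Granting that, the second step is pure symmetric-function bookkeeping. Write $\genu, \gene_i, \genc_{s,i}$ for the generators upstairs in $H^*_G(B_GU(n+1))$ as symmetric functions in the $n+1$ pairs of variables, and pull back along the ring map that sets the $(n+1)$-st pair $(\genw_{n+1}, \genu_{n+1})$ equal to the image of $(\genw,\genu)$ under the relevant point map. For the $\sigma$-case we substitute $(\genw_{n+1},\genu_{n+1}) = (0,0)$; an elementary symmetric polynomial in $n+1$ variables with one variable set to zero becomes the same elementary symmetric polynomial in the remaining $n$ variables, so $\genu\mapsto \genu$, $\gene_i\mapsto \gene_i$, $\genc_{s,i}\mapsto \genc_{s,i}$, with the understanding that $\gene_{n+1}$ and $\genc_{s,n+1-s}$ — which involve all $n+1$ variables and hence the $(n+1)$-st — map to $0$. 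For the trivial case we substitute $(\genw_{n+1},\genu_{n+1}) = (0, y)$. Here $\genu = \sum_m \genu_m \mapsto y + \genu$; for $\gene_i$, since $\genw_{n+1}=0$, nothing changes and $\gene_i\mapsto \gene_i$; for $\genc_{s,i} = \sum \genw_{m_1}\cdots\genw_{m_s}\genu_{l_1}\cdots\genu_{l_i}$ over disjoint partitions, the terms not involving index $n+1$ contribute $\genc_{s,i}$ in the $n$-variable algebra, while the terms with $n+1\in\{l_*\}$ contribute $\genu_{n+1}=y$ times $\sum \genw_{m_1}\cdots\genw_{m_s}\genu_{l_1}\cdots\genu_{l_{i-1}}$, which is $y\cdot\genc_{s,i-1}$; since $y\genc_{s,i-1}=\genc_{s,i-1}$ (as $\genc_{s,i-1}$ has trivial restriction, i.e. $x\genc_{s,i-1}=0$, so $y$ acts as the identity), we get $\genc_{s,i}\mapsto \genc_{s,i}+\genc_{s,i-1}$. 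The base case $i=1$ uses $\genc_{s,0} = y\gene_s$ as stated. One must also record that $\genc_{s,n+1-s}\mapsto 0$ by the convention, consistent with the formula since that class genuinely involves index $n+1$.

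The main obstacle — really the only place requiring care rather than bookkeeping — is pinning down the point-map values, in particular the claim $\genu\mapsto y$ for the trivial line. This is where the choice of model for $B_GU(1)$ enters (as flagged in the discussion after Proposition \ref{C2ChernIso}, the ambiguity between $\genu$ and $y-\genu$), and it must be checked that the Grassmannian model $Gr(n,\C^{\infty\rho})$ used for the maximal-torus computation is the one for which $\genu$ corresponds to the "$+$" idempotent $\geneidem_2^0$, so that a trivially-acted line is classified by a fixed point in the $v^+$ component. Concretely, one traces through the fixed-point decomposition $B_GU(1)^{C_2} = BU(1)\sqcup BU(1)$ spanned by $v^+=(z_0:0:z_2:\cdots)$ and $v^-=(0:z_1:0:\cdots)$ and checks that adding a trivial line lands in the $v^+$ sheet; this then forces the restriction of $\genu$ along that inclusion to be the value of $\geneidem_2^0=\genu$ on its own component, namely $1$ in that factor, which assembles across the idempotent splitting to $y\in A_{\Q}$ because $1 = x/2 + \genu + (y-\genu)$ and only the middle idempotent survives. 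Everything else then follows from the two substitutions above, together with the relations $x\genu=0$, $x\genc_{s,i}=0$ already established.
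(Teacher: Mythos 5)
Your argument is correct and is essentially the paper's own proof: reduce to the maximal torus via the commutative square with $B_GT^n=B_GT^n\times *\to B_GT^{n+1}$, identify the point-inclusion values $(\genw_{n+1},\genu_{n+1})\mapsto(0,y)$ for the trivial line (landing in the $v^+$ sheet, which is exactly how the paper pins down $\genu$ versus $y-\genu$) and $(0,0)$ for the $\sigma$-line, and then do the symmetric-function substitution. One small slip that does not affect the argument: under the $\oplus 1$ map the class $\genc_{s,n+1-s}\in H^*_G(B_GU(n+1))$ does not map to $0$ but to $\genc_{s,n-s}$ (its monomials all contain $\genu_{n+1}$, not $\genw_{n+1}$), which is what your general formula together with the convention $\genc_{s,n+1-s}=0$ on the right-hand side already yields; the literal statement ``$\genc_{s,n+1-s}\mapsto 0$'' is only true for the $\oplus\sigma$ map.
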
	
	\begin{proof}We have a commutative diagram
		\begin{center}\begin{tikzcd}
			B_GU(n)\ar[r,"\oplus 1"]&B_GU(n+1)\\
			B_GT^n=B_GT^n\times *\ar[u]\ar[r]&B_GT^n\times B_GS^1=B_GT^{n+1}\ar[u]
			\end{tikzcd}\end{center}
		where the bottom map is the product of the identity map $B_GT^n\to B_GT^n$ and the inclusion map $*\to B_GS^1$ given by $*\mapsto v^+$ where $v^+=(1:0:1:0:\cdots)$ in the homogeneous coordinates of $B_GS^1$. This inclusion map induces on cohomology: $$A_{\Q}[\genu,\genw]/(\genu^2=\genu, x\genu) \to A_{\Q}$$ given by $y-\genu\mapsto 0$ and $\genw\mapsto 0$ (this is verified by looking at the $C_2$ fixed points). Similarly, adding a $\sigma$ representation induces $$A_{\Q}[\genu,\genw]/(\genu^2=\genu, x\genu) \to A_{\Q}$$ given by
		$\genu\mapsto 0$ and $\genw\mapsto 0$.	Thus the $\oplus 1$ induced map is determined by:
		\begin{gather}
		\genw_i\mapsto \genw_i\text{ , }\genu_i\mapsto \genu_i\text{ , }i<n+1\\
		\genw_{n+1}\mapsto 0\text{ , }\genu_{n+1}\mapsto y
		\end{gather}
		The $\oplus \sigma$ induced map is determined by:
		\begin{gather}
		\genw_i\mapsto \genw_i\text{ , }\genu_i\mapsto \genu_i\text{ , }i<n+1\\
		\genw_{n+1}\mapsto 0\text{ , }\genu_{n+1}\mapsto 0
		\end{gather}
		These descriptions imply the ones on the generators $\genu,\gene_i,\genc_{s,j}$.\end{proof}
	
	\begin{prop}
		The direct sum of bundles map $B_GU(n)\times B_GU(m)\to B_GU(n+m)$ induces on cohomology: 
		\begin{gather}
		\genu\mapsto \genu\otimes 1+1\otimes \genu\\
		\gene_i\mapsto \sum_{j+k=i}\gene_j\otimes \gene_k\\
		\genc_{s,i}\mapsto \sum_{s'+s''=s\atop i'+i''=i}\genc_{s',i'}\otimes \genc_{s'',i''}
		\end{gather} 
		using the convention for defining $\gene_0,\genc_{s,0},\genc_{0,i}$ in the RHS.
	\end{prop}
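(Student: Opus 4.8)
The plan is to reduce the statement to the analogous statement on maximal tori, exactly as was done in the $\oplus 1$ and $\oplus\sigma$ cases, and then push the computation through the symmetric-function identifications of Corollary \ref{AlgebraCorollaryQ}. First I would write down the commutative square
\begin{center}\begin{tikzcd}
B_GU(n)\times B_GU(m)\ar[r,"\oplus"]&B_GU(n+m)\\
B_GT^n\times B_GT^m\ar[u]\ar[r,"="]&B_GT^{n+m}\ar[u]
\end{tikzcd}\end{center}
whose vertical maps are the maximal torus inclusions and whose bottom map is the obvious identification of $T^n\times T^m$ with $T^{n+m}$. Since the right-hand vertical map induces an injection $H^*_G(B_GU(n+m))\hookrightarrow H^*_G(B_GT^{n+m})^{\Sigma_{n+m}}$ (it is in fact the isomorphism onto the fixed points, by the maximal torus isomorphism proved above), it suffices to compute the effect of the bottom map, which is trivial: it simply sends the variables $\genw_1,\dots,\genw_{n+m}$ and $\genu_1,\dots,\genu_{n+m}$ to the concatenation of the two sets of variables $\genw_1',\dots,\genw_n',\genu_1',\dots,\genu_n'$ on the first factor and $\genw_1'',\dots,\genw_m'',\genu_1'',\dots,\genu_m''$ on the second.

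The heart of the argument is then the purely algebraic identity in symmetric polynomials: if we split the variable set $\{\genw_1,\dots,\genw_{n+m}\}$ into a first block of size $n$ and a second of size $m$ (and likewise for the $\genu_i$), then under the image of $H^*_G(B_GU(n+m))$ inside the product $H^*_G(B_GT^n)^{\Sigma_n}\otimes H^*_G(B_GT^m)^{\Sigma_m}$ we have
\begin{gather*}
\genu=\sigma_1(\genu_\bullet)\mapsto \sigma_1(\genu'_\bullet)\otimes 1+1\otimes\sigma_1(\genu''_\bullet),\\
\gene_i=\sigma_i(\genw_\bullet)\mapsto \sum_{j+k=i}\sigma_j(\genw'_\bullet)\otimes\sigma_k(\genw''_\bullet),\\
\genc_{s,i}=\!\!\sum_{(m_*,l_*)\in K_{s,i}}\!\!\genw_{m_1}\cdots\genw_{m_s}\genu_{l_1}\cdots\genu_{l_i}\ \mapsto\ \sum_{s'+s''=s,\ i'+i''=i}\genc_{s',i'}\otimes\genc_{s'',i''}.
\end{gather*}
The first two are the classical comultiplication formulas for the elementary symmetric polynomials on a disjoint union of variables (and the $\genu$ formula is the $i=1$ case applied to the idempotent variables). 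For the third, I would argue combinatorially: a monomial $\genw_{m_1}\cdots\genw_{m_s}\genu_{l_1}\cdots\genu_{l_i}$ appearing in $\genc_{s,i}$ is indexed by disjoint subsets $M\sqcup L\subseteq\{1,\dots,n+m\}$ with $|M|=s,|L|=i$; splitting each of $M$ and $L$ according to which block its elements fall in gives the decomposition $M=M'\sqcup M''$, $L=L'\sqcup L''$, and summing over the sizes $s'=|M'|$, $s''=|M''|$, $i'=|L'|$, $i''=|L''|$ reproduces exactly the right-hand side, where the $\genc$ with a zero subscript are interpreted via the conventions $\gene_0=1$, $\genc_{s,0}=y\gene_s$, $\genc_{0,i}=(i!)^{-1}\genu(\genu-1)\cdots(\genu-i+1)=\sigma_i(\genu_\bullet)$. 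The one subtlety to check carefully is the boundary terms: when $i'=0$ the factor $\genc_{s',0}$ must pick up the coefficient $y$ rather than $1$ (because the "empty $\genu$-product" on a space of $C_2$-fixed points corresponds to the idempotent $y=1-x/2$, not to the unit $1$ — this is visible already in $H^*_G(B_GU(1))$ where $\genu^0$ splits as $x/2+\genu$ and the non-equivariant piece contributes $y\gene_s$), and similarly when $s'=0$ one gets $\genc_{0,i'}=\sigma_{i'}(\genu_\bullet)$ which is the right polynomial in $\genu$.

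The main obstacle I anticipate is precisely bookkeeping these conventions at the $s=0$ or $i=0$ boundary and confirming that the combinatorial splitting is compatible with the idempotent relation $\genu_i^2=\genu_i$ and with $x\genu_i=0$ — i.e. that no extra terms are produced or lost when a variable $\genu_{l}$ would be "repeated" across the two conventions, and that the $y$'s multiply correctly ($y^2=y$, $xy=0$). Once the torus-level formulas are verified, the proposition follows immediately because the maximal torus map is injective on the relevant cohomology rings, so an identity that holds after applying it holds on the nose; no further work on the relation set $S$ is needed. I would close by noting that the displayed formulas are visibly well-defined modulo $S$ since they are literally the images of the symmetric polynomials $\genu,\gene_i,\genc_{s,i}$ under a ring homomorphism.
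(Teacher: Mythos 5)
Your proposal is correct and follows essentially the same route as the paper: pass to the maximal tori, where the direct-sum map just relabels the variables $\genw_i,\genu_i$ into two blocks, and then read off the formulas for $\genu,\gene_i,\genc_{s,j}$ from the combinatorics of splitting the index sets. Your extra care with the boundary conventions (the $y$ in $\genc_{s,0}=y\gene_s$ being harmless because the other tensor factor is annihilated by $x$) merely makes explicit what the paper's one-line deduction leaves implicit.
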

	\begin{proof}The corresponding map on maximal tori $$\prod^nB_GS^1\times \prod^mB_GS^1\to \prod^{n+m}B_GS^1$$ induces on cohomology:
		\begin{equation}
		\genw_i\mapsto \begin{cases}
		\genw_i\otimes 1&\textup{ if }i\le n\\
		1\otimes \genw_{i-n}&\textup{ if }i>n
		\end{cases}\text{ , }
		\genu_i\mapsto \begin{cases}
		\genu_i\otimes 1&\textup{ if }i\le n\\
		1\otimes \genu_{i-n}&\textup{ if }i>n
		\end{cases}
		\end{equation}
		This implies the formulas on $\genu,\gene_i,\genc_{s,j}$.
	\end{proof}
	
	\begin{prop}\label{TensorProof}
		
		The tensor product of bundles map $B_GU(1)\times B_GU(1)\to B_GU(1)$ induces on cohomology:
		\begin{gather}
		\genu\mapsto y-\genu\otimes 1-1\otimes \genu+2\genu\otimes \genu\\
		\genw\mapsto \genw\otimes 1+1\otimes \genw\end{gather}
	\end{prop}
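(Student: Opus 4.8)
The plan is to reduce the computation to the fixed-point decomposition of Proposition \ref{C2Chern1Class} and then track what the tensor-product-of-line-bundles map does on each piece. Recall that $B_GU(1)=\C P^{\infty}$ with the $C_2$-action $g(z_0:z_1:z_2:\cdots)=(z_0:-z_1:z_2:-z_3:\cdots)$, and that $H^*_G(B_GU(1))=H^*(\C P^{\infty})^{C_2}\oplus H^*((\C P^{\infty})^{C_2})=\Q[\gene_1]\oplus\Q[\geneidem_2]\oplus\Q[\geneidem_3]$ with the two fixed components spanned by $v^+=(z_0:0:z_2:0:\cdots)$ and $v^-=(0:z_1:0:z_3:\cdots)$. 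The tensor product map $m:B_GU(1)\times B_GU(1)\to B_GU(1)$ classifies $L_1\otimes L_2$; concretely it is the standard multiplication $\C P^{\infty}\times\C P^{\infty}\to\C P^{\infty}$, which is $C_2$-equivariant because the $C_2$-action is by a line bundle automorphism on each factor. So the first step is to identify $m$ explicitly, then restrict it to fixed points.

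First I would compute the effect of $m$ on underlying (nonequivariant) cohomology: this is the classical formula $\genw\mapsto\genw\otimes1+1\otimes\genw$ on the generator $r$ of $H^2(\C P^{\infty})$, which gives the second displayed formula once we note $\genw$ restricts to $\gene_1+\geneidem_2+\geneidem_3$ and pass through the identifications in subsection \ref{n=1Comp}. Second, and this is the real content, I would analyze $m$ on $C_2$-fixed points. The fixed set $(\C P^{\infty}\times\C P^{\infty})^{C_2}$ is a disjoint union of four pieces $v^{\pm}\times v^{\pm}$, and the key observation is that tensoring a trivial line bundle with a trivial one is trivial, a trivial with a sign is a sign, and—crucially—a sign with a sign is trivial (since $\sigma\otimes\sigma=1$ as $C_2$-representations). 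Hence $m$ sends the component $v^+\times v^+$ and the component $v^-\times v^-$ into the $v^+$-component of $(\C P^{\infty})^{C_2}$, while $v^+\times v^-$ and $v^-\times v^+$ land in the $v^-$-component. On each such component $m$ is, on underlying spaces, again the multiplication $\C P^{\infty}\times\C P^{\infty}\to\C P^{\infty}$, so on $H^0$ it is the identity/diagonal of the respective idempotents.

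Third, I would translate this bookkeeping into the algebra. On $H^0_G(B_GU(1))=\Q\geneidem_1^0\oplus\Q\geneidem_2^0\oplus\Q\geneidem_3^0$ with $\geneidem_1^0=x/2$, $\geneidem_2^0=\genu$, $\geneidem_3^0=y-\genu$, the map $m^*$ in degree $0$ is determined by where each target idempotent goes, read off from the above fixed-point analysis: the $v^+$-idempotent $\genu$ pulls back to (the idempotent supported on $v^+\times v^+$) $+$ (the one on $v^-\times v^-$), i.e.\ to $\genu\otimes\genu+(y-\genu)\otimes(y-\genu)$, and the $v^-$-idempotent $y-\genu$ pulls back to $\genu\otimes(y-\genu)+(y-\genu)\otimes\genu$, while $x/2\mapsto x/2\otimes x/2$ (the split summand is natural). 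Expanding $\genu\otimes\genu+(y-\genu)\otimes(y-\genu)$ using $y\otimes y=y$, $y\otimes\genu=\genu\otimes 1$ modulo the relation $x\genu=0$ (so $y\genu=\genu$ on each factor), and $\genu^2=\genu$, gives exactly $y-\genu\otimes1-1\otimes\genu+2\,\genu\otimes\genu$, which is the first displayed formula. The main obstacle is purely the careful identification of the idempotents under $m^*$ and the arithmetic with $y=1-x/2$ modulo $x\genu=0$; once the representation-theoretic fact $\sigma\otimes\sigma=1$ is invoked to pair up the four fixed components correctly, everything else is a short direct computation, and one final consistency check is that the proposed $m^*$ is indeed a ring map (compatible with $\genu^2=\genu$, $x\genu=0$), which can be verified on the formulas directly.
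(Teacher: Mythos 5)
Your overall strategy is the same as the paper's: split $H^*_G(B_GU(1))$ via the fixed-point decomposition into $\Q[\geneidem_1]\oplus\Q[\geneidem_2]\oplus\Q[\geneidem_3]$, observe that on fixed points the multiplication obeys $v^{\alpha}\cdot v^{\beta}=v^{\alpha\beta}$, and translate the resulting component bookkeeping into the idempotents $\geneidem_2^0=\genu$, $\geneidem_3^0=y-\genu$. Your degree-zero computation is exactly the paper's: $\genu\mapsto\genu\otimes\genu+(y-\genu)\otimes(y-\genu)$, which expands to the stated formula (your intermediate identity ``$y\otimes\genu=\genu\otimes 1$'' should read $y\otimes\genu=1\otimes\genu$ and $\genu\otimes y=\genu\otimes 1$, but the final expansion is correct).

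The one genuine gap is the justification of $\genw\mapsto\genw\otimes1+1\otimes\genw$. You deduce it from the classical nonequivariant formula for the generator of $H^2(\C P^{\infty})$ together with $\genw=\geneidem_1+\geneidem_2+\geneidem_3$, but restriction to underlying cohomology kills the entire fixed-point summand, so the nonequivariant formula only controls the image of $\geneidem_1$; for instance $\genw\otimes1+1\otimes\genw$ and $\genw\otimes1+1\otimes\genw+\genw\genu\otimes\genu$ have the same restriction, so they cannot be distinguished this way. To pin down $m^*(\genw)$ you must also compute the degree-two effect on $H^*\big((B_GU(1))^{C_2}\big)$, i.e. show $\geneidem_2\mapsto\geneidem_2\otimes\geneidem_2^0+\geneidem_2^0\otimes\geneidem_2+\geneidem_3\otimes\geneidem_3^0+\geneidem_3^0\otimes\geneidem_3$ and the analogous formula for $\geneidem_3$, which is what the paper does. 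The needed input is already in your step two --- on each pair of fixed components the map is again the standard multiplication of $\C P^{\infty}$'s, so the degree-two generator of the target component pulls back additively --- but you only draw the $H^0$ conclusion from it, and as written the second displayed formula is attributed to the nonequivariant computation alone, which is insufficient. Adding those two lines and summing over the three summands (using $\geneidem_1^0+\geneidem_2^0+\geneidem_3^0=1$) completes the proof of $\genw\mapsto\genw\otimes1+1\otimes\genw$.
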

	
	\begin{proof}The map in question, induced from multiplication $S^1\times S^1\to S^1$, is given on the homogeneous coordinates by multiplication of polynomials:
		\begin{equation}
		(x_0:x_1:\cdots)\otimes (y_0:y_1:\cdots)=(x_0y_0:x_0y_1+x_1y_0:\cdots)
		\end{equation}
	Write
		\begin{equation}
		H^*_{C_2}(B_GU(1))=H^*(BU(1))\oplus H^*(B_GU(1)^{C_2})=\Q[\geneidem_1]\oplus \Q[\geneidem_2]\oplus \Q[\geneidem_3]
		\end{equation}
		as in subsection \ref{n=1Comp}.  Then $B_GU(1)\times B_GU(1)\to B_GU(1)$ induces
		\begin{equation}
		H^*_G(B_GU(1))\to H^*_G(B_GU(1))\boxtimes_{A_{\Q}} H^*_G(B_GU(1))
		\end{equation}
		which breaks into
		\begin{gather}
		H^*(BU(1))\to H^*(B_GU(1))\otimes H^*(B_GU(1))\\
		H^*(B_GU(1)^{C_2})\to H^*(B_GU(1)^{C_2})\otimes H^*(B_GU(1)^{C_2})
		\end{gather}
		By the nonequivariant case, the first map is
		\begin{gather}
		\geneidem_1^0\mapsto \geneidem_1^0\otimes \geneidem_1^0\\
		\geneidem_1\mapsto \geneidem_1\otimes \geneidem_1^0+\geneidem_1^0\otimes \geneidem_1
		\end{gather}
		For the second map, note that in the $H$-space structure, $v^+,v^-$ multiply according to: $v^{\alpha}\cdot v^{\beta}=v^{\alpha\beta}$ for $\alpha,\beta=\pm 1$. This means that the $+$ part of $H^*(B_GU(1)^{C_2})$ maps to the $+\otimes +$ and $-\otimes -$ parts of $H^*(B_GU(1)^{C_2})\otimes H^*(B_GU(1)^{C_2})$ to give:
		\begin{gather}
		\geneidem_2^0\mapsto \geneidem_2^0\otimes \geneidem_2^0+\geneidem_3^0\otimes \geneidem_3^0\\
		\geneidem_2\mapsto \geneidem_2\otimes \geneidem_2^0+\geneidem_2^0\otimes \geneidem_2+\geneidem_3\otimes \geneidem_3^0+\geneidem_3^0\otimes \geneidem_3
		\end{gather}
		Similarly, the $-$ part of $H^*(B_GU(1)^{C_2})$ maps to the $+\otimes -$ and $-\otimes +$ parts of $H^*(B_GU(1)^{C_2})\otimes H^*(B_GU(1)^{C_2})$ to give:
		\begin{gather}
		\geneidem_3^0\mapsto \geneidem_2^0\otimes \geneidem_3^0+\geneidem_3^0\otimes \geneidem_2^0\\
		\geneidem_3\mapsto \geneidem_2\otimes \geneidem_3^0+\geneidem_2^0\otimes \geneidem_3+\geneidem_3\otimes \geneidem_2^0+\geneidem_3^0\otimes \geneidem_2
		\end{gather}
		In terms of the $\genw,\genu$ generators, recall $\geneidem_2^0=\genu, \geneidem_3^0=y-\genu, \geneidem_1=\genw\geneidem_1^0, \geneidem_2=\genw\geneidem_2^0$ and $\geneidem_3=\genw\geneidem_3^0$. Substituting these gives the desired formulas.
	\end{proof}

Iterating $B_GS^1\times B_GS^1\to B_GS^1$ gives $(B_GS^1)^{\times n}\to B_GS^1$ which induces on cohomology:
\begin{gather}
\genu\mapsto \frac{(-1)^n+1}2y+(-1)^{n+1}\sum_{i=1}^n(-2)^{i-1}\sigma_i(\genu_1,...,\genu_n)\\
\genw\mapsto \genw_1+\cdots+\genw_n
\end{gather}
We will also need that the map induced from conjugation $B_GU(1)\to B_GU(1)$ is:
	\begin{gather}
	\genu\mapsto \genu\\
	\genw\mapsto -\genw
	\end{gather}
This is verified similarly to Proposition \ref{TensorProof}.
	
	\section{\texorpdfstring{$C_2$ stable Chern classes}{C2 stable Chern classes}}\label{C2ChernStable}
	
	Since there are two maps $B_GU(n)\to B_GU(n+1)$ (given by direct sum with the trivial or the $\sigma$ representation) one can try to stabilize against, there are a few distinct notions of stable characteristic classes.
	
	\subsection{Stabilizing against one representation} First, we can stabilize with respect to direct sum with the trivial representation and get:
	\begin{equation}
	B_G^+U=\colimit(B_GU(1)\xrightarrow{\oplus 1}B_GU(2)\xrightarrow{\oplus 1}\cdots)
	\end{equation}
	Then
	\begin{equation}
	K_G^+(X)=[X,B_G^+U\times \Z]^{G}
	\end{equation}
	is the semi-ring of virtual bundles $V-n$ on $X$ modulo the equivalence relation $V\sim W\iff V+n=W+n$ for some $n$. For example, $K_G^+(*)=\Z\times \mathbb N$. Group completing $K_G^+$ gives equivariant $K$-theory $K_G$.
	
	The fixed points of $B_G^+U$ are:
	\begin{equation}
	(B_G^+U)^{C_2}=BU\times \coprod_{n=0}^{\infty}BU(n)
	\end{equation}
 We note that $B_G^+U$ is the usual equivariant classifying space $B_GU=E_GU/U$.
	
	An equivalent way of getting $B_GU$ is to stabilize with respect to direct sum with the $\sigma$ representation:
	\begin{equation}
	B_G^-U=\colimit(B_GU(1)\xrightarrow{\oplus \sigma}B_GU(2)\xrightarrow{\oplus \sigma}\cdots)
	\end{equation}
	Note that 	$B_G^+U\simeq B_G^-U$.
	
	\subsection{Stabilizing with respect to all representations} Group completing $B_G^+U$ or equivalently $B_G^-U$, gives:
	\begin{equation}
	B_G^{\pm}U=\colimit(B_G^+U\xrightarrow{\oplus \sigma}B_G^+U\xrightarrow{\oplus \sigma}\cdots)=\colimit(B_G^-U\xrightarrow{\oplus 1}B_G^-U\xrightarrow{\oplus 1}\cdots)
	\end{equation}
		Then
	\begin{equation}
	K_G(X)=[X,B_G^{\pm}U\times \Z]^{G}
	\end{equation}
	is the semi-ring of virtual bundles  $V-n-m\sigma$ modulo the equivalence relation $V\sim W\iff V+n+m\sigma=W+n+m\sigma$ for some $n,m$. Thus $K_G(X)$ is exactly the equivariant $K$-theory of $X$.
	
	We finally have the fixed point computation:
	\begin{equation}
	(B_G^{\pm}U)^{C_2}=BU\times BU\times \Z
	\end{equation}

	\subsection{Cohomology computations}
	
	We prefer to work with $B_G^-U$ since the map\\
	$H^*_G(B_GU(n+1))\xrightarrow{\oplus \sigma} H^*_G(B_GU(n))$ has a simpler expression on the generators $\genu,\genc_{s,j}$ compared to the map $H^*_G(B_GU(n+1))\xrightarrow{\oplus 1} H^*_G(B_GU(n))$. 
	
	The $\Q$-subalgebras spanned by $\genu$ in each $H_G^*(B_GU(n))$ have limit:
	\begin{gather}
	\limit \frac{\Q[\genu]}{\genu(\genu-1)(\genu-2)\cdots (\genu-n)}= \prod_{n\ge 0}\Q\end{gather}
under the correspondence:
\begin{gather}
a_0+\sum_{i\ge 0}a_{i+1}\genu(\genu-1)\cdots (\genu-i)\mapsto (a_0,a_0+a_1,a_0+2a_1+2a_2,...)
	\end{gather}
given by evaluating the series in the LHS at $\genu=0,1,2,...$.

Under this correspondence we have:
	\begin{equation}
	H^0_G(B_G^-U)=A_{\Q}\times \prod_{n\ge 1}A_{\Q}/x
\end{equation}
As a graded  $H^0_G(B_G^-U)[\gene_1,\gene_2,...]$-algebra, $H^*_G(B_G^-U)$ is generated by the series $\sum_{i=1}^{\infty}r_i\genc_{s,i}\in H^{2s}_G(B_G^-U)$ for $r_i\in \Q$ and $s=1,2,...$.\medbreak
	
	Describing $H^*_G(B_G^{\pm}U)$ in terms of the generators $\genu,\gene_i,\genc_{s,j}$ is even more complicated, as we need to take the limit of $H^*_G(B_G^-U)$ with respect to the $\oplus 1$ maps. We can alternatively view $H^*_G(B_G^{\pm}U)$ as the limit of a diagram indexed on $\mathbb N\times \mathbb N$ with $(n,m)\mapsto B_GU(n)$, horizontal maps being $\oplus 1$ and vertical maps being $\oplus \sigma$. An element of $H^*_G(B_G^{\pm}U)$ will then be a compatible doubly indexed sequence $s_{n,m}\in H^*_G(B_GU(n))$. The constant sequences (in both variables) consist of elements that are invariant under both the $\oplus 1$ and $\oplus \sigma$ maps. We can see that
	\begin{equation}
		\gene_i, \genc_i:=\gene_i\genu-\genc_{i,1}
	\end{equation}
have this property, and we conjecture that the subalgebra of constant sequences is generated by them. This is equivalent to the $\genc_1,\genc_2,...$ being algebraically independent over $\Q[\gene_1,\gene_2,...]$ and further equivalent to the subalgebra of constant sequences being
	\begin{equation}
	\frac{A_{\Q}[\gene_i,\genc_i]}{x\genc_i} 
\end{equation}
The coalgebra structure is
\begin{gather}
	\genc_s\mapsto \sum_{i+j=s}(\gene_i\otimes \genc_j+\genc_i\otimes \gene_j)
\end{gather} 
using the conventions $\gene_0=1$ and $\genc_0=0$.

	\subsection{Homology computations}
	
	For homology, the idempotent decomposition gives
	\begin{equation}
	H_*^{C_2}(X)=H_*(X)^{C_2}\oplus H_*(X^{C_2})
	\end{equation}
	as Mackey functors. For an $H$-space $X$ where $X\times X\to X$ and $S^0\to X$ are $C_2$ equivariant, this becomes an isomorphism of Green functors. 
	
	Setting $X=B_G^-U$ and using 	\begin{equation}
	(B_G^-U)^{C_2}=\coprod_{n=0}^{\infty}BU(n)\times BU
	\end{equation}
	we get that $H_*^G(X)$ is the sum of
	\begin{gather}
	H_*(BU)=\Q[b_i^e]_{i\ge 1}\text{ , }\\
	H_*(B_G^-U^{C_2})= \oplus_{n\ge 1}H_*(BU(n)))\otimes H_*(BU)=\Q[b_0^+,b_i^+,b_i^-]_{i\ge 1}
	\end{gather}
where for $i\ge 1$, the $b_i^e,b_i^+,b_i^-$ are duals of $c_1^i$ using the first Chern class in the respective nonequivariant cohomology rings ($b_0^+$ indexes the components).

	If we let $b_i=b_i^e+b_i^++b_i^-$, $a_i=b_i^+$ for $i\ge 1$ and $d=x/2+b_0^+$ we get
	\begin{equation}
	H_*^G(B_G^-U)=A_{\Q}[d,a_i,b_i]_{i\ge 1}/(xd=x, xa_i=0)
	\end{equation}
	Group completing is localization at $d$:
	\begin{equation}
	H_*^G(B_G^{\pm}U)=A_{\Q}[d^{\pm},a_i,b_i]_{i\ge 1}/(xd=x, xa_i=0)
	\end{equation}
	
	The $a_i,b_i$ are dual to $\genc_1^i,c_1^i$ respectively, while $d$ is dual to the finite series \\$x/2+\genu\in H^0(B_G^-U)$ i.e. the sequence $(x/2,1,2,...)\in A_{\Q}\times \prod_{n\ge 1}A_{\Q}/x$.
	
	The coalgebra structure is:
	\begin{gather}
	d\mapsto d\otimes d\\
	a_i\mapsto \sum_{j+k=i}a_j\otimes a_k\\
	b_i\mapsto \sum_{j+k=i}(b_j\otimes b_k-b_j\otimes a_k-b_k\otimes a_j+2a_j\otimes a_k)
\end{gather}
using the conventions $a_0=d-x/2$ and $b_0=1$.

	\section{\texorpdfstring{$C_2$ symplectic classes}{C2 symplectic classes}}\label{C2Symplectic}
	
	Analogously to the Chern classes, we have:

	\begin{prop}There exist classes $\genu,k_i,\kappa_{s,j}\in H_G^*(B_GSp(n))$ of degrees $0,4i,4s$ respectively, where $1\le i,s\le n$ and $1\le j\le n-s$, such that
		\begin{equation}
		H_G^*(B_GSp(n))=A_{\Q}[\genu,k_i,\kappa_{s,j}]/(x\genu, x\kappa_{s,j}, S)
		\end{equation}
		The relation set $S$ is the same as that for $H_G^*(B_GU(n))$ with $\gene_i,\genc_{s,i}$ replaced by $k_i,\kappa_{s,i}$.\smallbreak	
		
		The generators $\genu, \kappa_{s,j}$ restrict to $0$ while the $k_i$ restrict to the nonequivariant symplectic classes.\smallbreak
		
		The maximal torus inclusion $U(1)^n\hookrightarrow Sp(n)$ induces an isomorphism
		\begin{gather}
		H^*_G(B_GSp(n))=	(H^*_G(B_GU(1))^{\otimes n})^{C_2\wr \Sigma_n}
	\end{gather}
Explicitly:
\begin{gather}
		A_{\Q}[\genu,k_i,\kappa_{s,j}]/(x\genu, x\kappa_{s,j}, S)=(A_{\Q}[\genu_i,\genw_i]/(x\genu_i))^{C_2\wr\Sigma_n}
		\end{gather}
Under this identification:
		\begin{gather}
		\genu=\sum_{1\le i\le n}\genu_i\\
		k_i=\sum_{m_*\in K_i}\genw_{m_1}^2\cdots \genw_{m_i}^2\\
		\kappa_{s,j}=\sum_{(m_*,l_*)\in K_{s,j}}\genw_{m_1}^2\cdots \genw_{m_s}^2\genu_{l_1}\cdots \genu_{l_j}
		\end{gather}
		where $K_i$ and $K_{s,j}$ are as in Corollary \ref{AlgebraCorollaryQ}. 
	\end{prop}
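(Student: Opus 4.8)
The plan is to mimic, \emph{mutatis mutandis}, the proof given above for the unitary case $B_GU(n)$, replacing the role of $\Sigma_n$ by the wreath product $C_2\wr\Sigma_n$ acting on $B_GT=\prod^nB_GS^1$, where the extra $C_2$ factors act by complex conjugation on each $B_GS^1$. First I would establish the maximal torus isomorphism $H^*_G(B_GSp(n))\to H^*_G(B_GT)^{C_2\wr\Sigma_n}$. As in the Chern case, since the Weyl group acts trivially on $B_GSp(n)$ and the nonequivariant maximal torus isomorphism $H^*(BSp(n))=H^*(BT)^{C_2\wr\Sigma_n}$ holds rationally, it suffices by the idempotent decomposition $H^*_G(X)=H^*(X)^{C_2}\oplus H^*(X^{C_2})$ to check the isomorphism on the fixed-point summand, i.e. that $H^*((B_GSp(n))^{C_2})\to H^*((B_GT)^{C_2})^{C_2\wr\Sigma_n}$ is an isomorphism. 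Using a Grassmannian model $Gr_n(\mathbb{H}^{\infty\rho})$ for $B_GSp(n)$ — quaternionic $n$-planes in a sum of regular representations — a $C_2$-fixed plane $V$ splits as $V^+\oplus V^-$ into its $\pm 1$-eigenspaces, giving $(B_GSp(n))^{C_2}=\coprod_{m=0}^n BSp(m)\times BSp(n-m)$, while $(B_GT)^{C_2}=\coprod_{m=0}^n\coprod^{\binom nm}\prod^n BS^1$ indexed by sign configurations. The map sends a tuple $(v_1^{\pm},\dots,v_n^{\pm})$ to the direct sum, so it breaks up over $m$ into $\coprod^{\binom nm}BU(1)^n\to BSp(m)\times BSp(n-m)$; taking $C_2\wr\Sigma_n$-fixed points amounts to fixing a favorite sign configuration and then taking $(C_2\wr\Sigma_m)\times(C_2\wr\Sigma_{n-m})$-fixed points, which reduces to a tensor product of the nonequivariant maximal torus isomorphisms $H^*(BSp(i))\to(H^*(BU(1))^{\otimes i})^{C_2\wr\Sigma_i}$.

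Next I would identify the fixed-point ring algebraically. From Proposition~\ref{C2Chern1Class}, $H^*_G(B_GT)=A_{\Q}[\genw_i,\genu_i]/(x\genu_i,\genu_i^2=\genu_i)$, with $\Sigma_n$ permuting the indices and the $i$-th $C_2$ acting by $\genw_i\mapsto-\genw_i$, $\genu_i\mapsto\genu_i$ (the conjugation action on $B_GU(1)$ is computed in the excerpt right after Proposition~\ref{TensorProof}). So the $C_2\wr\Sigma_n$-invariants are exactly the $\Sigma_n$-invariant polynomials that are also even in each $\genw_i$ separately, i.e. polynomials in $\genw_i^2$ and $\genu_i$ that are symmetric. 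This is precisely the same invariant-theory problem solved in Appendix~\ref{appen} and recalled in Corollary~\ref{AlgebraCorollaryQ}, but with $\genw_i$ replaced throughout by $\genw_i^2$. Hence the substitution $\gene_i\mapsto k_i$, $\genc_{s,j}\mapsto\kappa_{s,j}$ carries the presentation and the relation set $S$ verbatim, giving $H^*_G(B_GSp(n))=A_{\Q}[\genu,k_i,\kappa_{s,j}]/(x\genu,x\kappa_{s,j},S)$ with the stated formulas $\genu=\sum\genu_i$, $k_i=\sum_{m_*\in K_i}\genw_{m_1}^2\cdots\genw_{m_i}^2$, $\kappa_{s,j}=\sum_{(m_*,l_*)\in K_{s,j}}\genw_{m_1}^2\cdots\genw_{m_s}^2\genu_{l_1}\cdots\genu_{l_j}$. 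The degrees double because $|\genw_i^2|=4$.

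Finally, for the restriction (augmentation) claim: restriction to the bottom level is the projection onto the $H^*(BSp(n))^{C_2}$ summand, which kills everything supported on the fixed points; $\genu$ and $\kappa_{s,j}$ live entirely in the fixed-point summand (they involve the idempotent $\genu_i$ or are detected there), so they restrict to $0$, while $k_i=\sigma_i(\genw_1^2,\dots,\genw_n^2)$ restricts to the $i$-th elementary symmetric polynomial in the $\genw_i^2$, which under the maximal torus isomorphism is the nonequivariant symplectic class $k_i$. The relations $x\genu=0$, $x\kappa_{s,j}=0$ record exactly the vanishing of these restrictions, completing the Mackey functor description.

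The main obstacle I anticipate is bookkeeping rather than conceptual: one must verify carefully that the conjugation $C_2$-action on the torus really does descend, on the fixed-point side $(B_GT)^{C_2}$, to the naive $\genw_i\mapsto-\genw_i$ while leaving the idempotents $\genu_i$ (equivalently the components $v_i^\pm$) fixed — the subtlety being whether conjugation interchanges the $+$ and $-$ components or preserves them. Once that is pinned down, invoking Appendix~\ref{appen} with $\genw_i\rightsquigarrow\genw_i^2$ is essentially formal, since nothing in that appendix's symmetric-function algorithm uses a relation on the $\genw_i$ beyond their being free polynomial generators.
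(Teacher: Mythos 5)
Your proposal is correct and follows essentially the same route as the paper: the fixed-point decomposition $(B_GSp(n))^{C_2}=\coprod_m BSp(m)\times BSp(n-m)$, the maximal torus $T=\prod^n S^1$ with Weyl group $C_2\wr\Sigma_n$, the sign-configuration bookkeeping reducing to the nonequivariant maximal torus isomorphism for $BSp(i)$, and then fixing first under $C_2^n$ via the conjugation action $\genw_i\mapsto-\genw_i$, $\genu_i\mapsto\genu_i$ to land in the Appendix~\ref{appen} invariant theory with $\genw_i$ replaced by $\genw_i^2$. The subtlety you flag about conjugation preserving the $\pm$ components is settled by the computation quoted after Proposition~\ref{TensorProof}, which is exactly what the paper invokes as well.
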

	
	\begin{proof}First, we have the fixed point computation:
		\begin{equation}
		B_GSp(n)^{C_2}=\coprod_{m+k=n}BSp(m)\times BSp(k)
		\end{equation}
		The maximal torus in $Sp(n)$ is $T=\prod^nS^1$, the same as in $U(n)$, but the Weyl group now is $C_2\wr \Sigma_n$ with $\Sigma_n$ permuting the $S^1$ factors and the $i$-th $C_2$ in $C_2\wr \Sigma_n=C_2^n\rtimes \Sigma_n$ acting as conjugation on the $i$-th $S^1$ factor in $T$.
		
		Following the $B_GU(n)$ case, the maximal torus inclusion on the fixed points breaks into
		\begin{equation} 
		\coprod^{\binom nm}BU(1)^n\to BSp(m)\times BSp(n-m)
		\end{equation} 
		where the coproduct of the left is indexed on sign configurations with $m$ many $+$'s. The induced map on cohomology is
		\begin{equation} 
		H^*(BSp(m))\otimes H^*(BSp(n-m))\to \oplus^{\binom nm} H^*(BU(1))^{\otimes n}
		\end{equation} 
		The action of $\Sigma_n$ on the right permutes the factors in the tensor product and the sign configuration, while the $i$-th $C_2$ in $C_2^n\rtimes \Sigma_n$ acts by as the conjugation-induced map $H^*(BS^1)\to H^*(BS^1)$ on the $i$-th factor in the tensor product (fixing the sign configuration). 
		
		Thus analogously to the $B_GU(n)$ case, we have an isomorphism into the $C_2\wr \Sigma_n$ fixed points of the right hand side. In conclusion:
		\begin{equation}
		H_G^*(B_GSp(n))=(\otimes^n H_G^*(B_GS^1))^{C_2\wr \Sigma_n}
		\end{equation}

		To compute the $C_2$ action on $H^*_G(B_GS^1)$ recall that conjugation $S^1\to S^1$ induces $\genu\mapsto \genu$ and $\genw\mapsto -\genw$ on cohomology. Fixing under the $C_2^n$ action, we get that $H_G^*(B_GSp(n))=R'^{\Sigma_n}$ where $R'=R(\genw_1^2,...,\genw_n^2,\genu_1,...,\genu_n)$ is the ring $R$ we used for $B_GU(n)$ but now with $\genw_i$ replaced by $\genw_i^2$.  So we get equivariant symplectic classes $\genu, k_i, \kappa_{s,i}$ of degrees $0,4i,4s$ with the desired expressions in terms of $\genu_i,\genw_i^2$.
	\end{proof}

	The maps $B_GSp(n)\to B_GSp(n+1)$ and $B_GSp(n)\times B_GSp(m)\to B_GSp(n+m)$ have the same formulas as the analogous maps for $B_GU(n)$, with $\gene_i,\genc_{s,i}$ replaced by $k_i,\kappa_{s,i}$ respectively.

	\begin{prop}\label{QuaterExplained}The forgetful map $B_GSp(n)\to B_GU(2n)$ induces on cohomology:
		\begin{gather}
		\genu\mapsto \genu\\
		\gene_{2i+1}, \genc_{2s+1,j}\mapsto 0\\
		\gene_{2i}\mapsto (-1)^ik_i\\
		\genc_{2s,j}\mapsto (-1)^s\kappa_{s,j}
		\end{gather}
		The quaternionization map $B_GU(n)\to B_GSp(n)$ induces:
		\begin{gather}
		\genu\mapsto \genu\\
		k_i\mapsto \sum_{a+b=2i}(-1)^{a+i}\gene_a\gene_b\\
		\kappa_{1,j}\mapsto \gene_1\genc_{1,j}-\genu\genc_{2,j-1}+(j-2)\genc_{2,j}+(j-1)\genc_{2,j-1}\\
		\kappa_{s,j}\mapsto \gene_s\genc_{s,j}+\cdots
		\end{gather}
	where $\cdots$ denotes a homogeneous polynomial in $R^{\Sigma_n}$ smaller than $\gene_s\genc_{s,j}$ (according to the order defined in appendix \ref{appen}). This polynomial can be computed algorithmically according to the algorithm in appendix \ref{appen} which has been implemented in the computer program found \href{https://github.com/NickG-Math/Symmetric_Polynomials}{here}.
		
	\end{prop}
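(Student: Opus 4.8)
The plan is to treat both maps by reduction to maximal tori, in the spirit of the preceding proofs. Fix the maximal torus $T=(S^1)^n$ of $Sp(n)$ (the same torus serves $U(n)$), the maximal torus $(S^1)^{2n}$ of $U(2n)$, and compatible inclusions. Since the maximal torus inclusions induce the isomorphisms $H^*_G(B_GU(n))=(H^*_G(B_GU(1))^{\otimes n})^{\Sigma_n}$ and $H^*_G(B_GSp(n))=(H^*_G(B_GU(1))^{\otimes n})^{C_2\wr\Sigma_n}$ and are natural in the group, the forgetful and quaternionization maps on cohomology are detected by restriction to $H^*_G(B_GT)$; it therefore suffices to compute those restrictions and check that the answers lie in, respectively come from, the appropriate invariant subrings, which is automatic since the relevant squares commute and torus restriction is injective.

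I would first pin down the two torus maps. The underlying complex representation of the $1$-dimensional quaternionic representation on which $z\in S^1$ acts by left multiplication is $\mathbb{C}_z\oplus\mathbb{C}_{\bar z}$, and the quaternionization of $\mathbb{C}_z$ is exactly that quaternionic representation. Hence, on classifying spaces, the $k$-th circle of $T\subseteq Sp(n)$ maps into $B_GU(2n)$ through the $(2k-1)$-st coordinate circle by the identity and through the $(2k)$-th by conjugation, while $(S^1)^n\subseteq U(n)\to T\subseteq Sp(n)$ is the identity on each circle. Using that conjugation on $B_GU(1)$ induces $\genu\mapsto\genu$, $\genw\mapsto-\genw$ (established at the end of Section \ref{C2ChernSumTensor}), the forgetful restriction sends $\genw_{2k-1}\mapsto\genw_k$, $\genw_{2k}\mapsto-\genw_k$, $\genu_{2k-1}\mapsto\genu_k$, $\genu_{2k}\mapsto\genu_k$, and the quaternionization restriction sends $\genw_i\mapsto\genw_i$, $\genu_i\mapsto\genu_i$ (so $\genw_i^2\mapsto\genw_i^2$).

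For the forgetful map I would substitute these into $\gene_i=\sigma_i(\genw_1,\dots,\genw_{2n})$ and $\genc_{s,j}$. The identity $\prod_{k=1}^n(1+\genw_kT)(1-\genw_kT)=\prod_{k=1}^n(1-\genw_k^2T^2)$ gives $\gene_{2i+1}\mapsto 0$ and $\gene_{2i}\mapsto(-1)^ik_i$ at once. For $\genc_{s,j}$, the monomials in which some pair $\{2k-1,2k\}$ meets the $\genw$-block in exactly one index cancel in pairs under the involution swapping $2k-1\leftrightarrow 2k$ for the least such $k$, so only fully paired $\genw$-configurations survive; this forces $\genc_{2s+1,j}\mapsto 0$, and for $\genc_{2s,j}$, after using $\genu_{2k-1}\genu_{2k}\mapsto\genu_k^2=\genu_k$ and collecting via the same generating function, the surviving sum is $(-1)^s\kappa_{s,j}$. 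For quaternionization, $k_i\mapsto\sigma_i(\genw_1^2,\dots,\genw_n^2)$; expanding $\prod_k(1-\genw_k^2T^2)=\bigl(\textstyle\sum_a\gene_a(-T)^a\bigr)\bigl(\textstyle\sum_b\gene_bT^b\bigr)$ and comparing coefficients of $T^{2i}$ yields $k_i\mapsto\sum_{a+b=2i}(-1)^{a+i}\gene_a\gene_b$, and $\genu\mapsto\genu$ is immediate.

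The main obstacle is the effect of quaternionization on $\kappa_{s,j}$. Its image is the $\Sigma_n$-symmetric polynomial $\sum_{(m_*,l_*)\in K_{s,j}}\genw_{m_1}^2\cdots\genw_{m_s}^2\genu_{l_1}\cdots\genu_{l_j}$ in the $\genw_i,\genu_i$, which must be rewritten in terms of the generators $\genu,\gene_a,\genc_{a,b}$. Its leading monomial for the order of Appendix \ref{appen} is $\genw_1^2\cdots\genw_s^2\genu_{s+1}\cdots\genu_{s+j}$, which is also the leading monomial of $\gene_s\genc_{s,j}$, so $\kappa_{s,j}\mapsto\gene_s\genc_{s,j}+\cdots$ with strictly smaller error term; pinning the error down in closed form for general $s$ is precisely the task solved by the reduction algorithm of Appendix \ref{appen}, which I would invoke together with its implementation. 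For $s=1$ the polynomial $\sum_m\genw_m^2\,\sigma_j(\genu_1,\dots,\widehat{\genu_m},\dots,\genu_n)$ is simple enough to be reduced by hand: split off the $\genw_m^2\genu_m$ terms using $\genu_m^2=\genu_m$, re-express the resulting sums in terms of $\gene_1,\genc_{2,j-1},\genc_{2,j}$, and compare with $\gene_1\genc_{1,j}$, arriving at $\gene_1\genc_{1,j}-\genu\genc_{2,j-1}+(j-2)\genc_{2,j}+(j-1)\genc_{2,j-1}$.
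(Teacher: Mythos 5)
Your route is the same as the paper's: the paper's entire proof consists of the commutative square of maximal tori for $Sp(n)\to U(2n)$ with torus map $z_i\mapsto(z_i,\bar z_i)$, the resulting coordinate formulas $\genw_i\mapsto\genw_i$, $\genw_{i+n}\mapsto-\genw_i$, $\genu_i,\genu_{i+n}\mapsto\genu_i$ (your interleaved indexing differs only by a conjugate torus, which is immaterial), and, implicitly, the identity torus map for quaternionization, with all remaining bookkeeping left to symmetric-function manipulations and the Appendix algorithm. Your treatment of the quaternionization half --- identity on the torus, $k_i\mapsto\sum_{a+b=2i}(-1)^{a+i}\gene_a\gene_b$ via the generating function, the leading term $\gene_s\genc_{s,j}$ with remainder deferred to the algorithm, and the $s=1$ case by hand --- is at (or above) the paper's level of detail and is fine.

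The gap is in the forgetful-map bookkeeping for the classes involving $\genu$'s. The pair-swap cancellation disposing of half-met pairs is correct and does give $\gene_{2i+1},\genc_{2s+1,j}\mapsto 0$ and $\gene_{2i}\mapsto(-1)^ik_i$, but the final step ``after using $\genu_{2k-1}\genu_{2k}\mapsto\genu_k$ and collecting via the same generating function, the surviving sum is $(-1)^s\kappa_{s,j}$'' is asserted rather than proved, and it does not come out as stated if one actually substitutes: each $\genu_k$ has \emph{two} preimage coordinates, so surviving configurations are counted with multiplicities and with contributions of shorter $\genu$-length. Concretely, for $n=2$ your substitution gives $\genc_{2,1}\mapsto -2(\genw_1^2\genu_2+\genw_2^2\genu_1)=-2\kappa_{1,1}$, and likewise $\genu=\sum_{i=1}^{2n}\genu_i\mapsto 2\genu$ (a case your sketch never checks for the forgetful direction); in general the surviving sum is $(-1)^s\sum_c\binom{c}{j-c}2^{2c-j}\kappa_{s,c}$ rather than $(-1)^s\kappa_{s,j}$. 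The generating-function identity you invoke only controls the pure $\genw$-classes; it says nothing about these $\genu$-multiplicities. So this step --- which is exactly the delicate point of the proposition --- needs a genuine argument or a renormalization of the classes, and as written your sketch papers over the one computation that does not follow formally from the torus formulas you (correctly, and in agreement with the paper) wrote down.
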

	\begin{proof}
		To compute the effect of the forgetful map $B_GSp(n)\to B_GU(2n)$ note that we have the commutative diagram
		\begin{center}\begin{tikzcd}
			T^n\ar[r,"\text{$z_i\mapsto (z_i,\bar z_i)$}"]\ar[d]&T^{2n}\ar[d]\\
			Sp(n)\ar[r]&U(2n)
			\end{tikzcd}\end{center}
		(here $T=S^1$). The top map induces $H^*_G(B_GT^{2n})\to H^*_G(B_GT^n)$ given by $\genu_i\mapsto \genu_i, \genu_{i+n}\mapsto \genu_i, \genw_i\mapsto \genw_i, \genw_{i+n}\mapsto -\genw_i$ for $1\le i\le n$.
	\end{proof}
	
	The stable symplectic classes work analogously to the stable Chern classes; see subsection \ref{StableSummary} for a summary.

	\section{\texorpdfstring{$C_2$ Euler and Pontryagin classes}{C2 Euler and Pontryagin classes}}\label{C2EulerPontryagin}

	Analogously to the symplectic classes, we have:
	\begin{prop}There exist classes $\genu,p_i,\pi_{s,j},\chi$ of degrees $0,4i,4s,n$ respectively in $H^*_G(B_GSO(2n))$, where $1\le i,s< n$ and $1\le j\le n-s$ such that
		\begin{equation}
		H_G^*(B_GSO(2n))=A_{\Q}[\genu,p_i,\pi_{s,j},\chi]/(x\genu, x\pi_{s,j}, S)
		\end{equation}
		where the relation set $S$ is the same as that for $H_G^*(B_GU(n))$ with $\gene_i,\genc_{s,i}$ replaced by $p_i,\pi_{s,i}$ and using the convention $p_n=\chi^2$.\smallbreak	
		
		The generators $\genu,\pi_{s,j}$ restrict to $0$ while the $p_i,\chi$ restrict to the nonequivariant Pontryagin and Euler classes respectively.\smallbreak
		
		The map $B_GSO(2n)\to B_GSO(2n+1)$ induces an injection in cohomology with
		\begin{equation}
		H_G^*(B_GSO(2n+1))=A_{\Q}[\genu,p_i,\pi_{s,j}]/(x\genu, x\pi_{s,j}, S)
		\end{equation}
		where $i$ is allowed to be $n$ (i.e. $p_n=\chi^2$ is included). \smallbreak
		
		The maximal torus inclusion $T\hookrightarrow SO(n)$ induces an isomorphism
		\begin{gather}
		H^*_G(B_GSO(n))=(H^*_G(B_GT))^W
		\end{gather}
		where $W$ is the corresponding Weyl group. Under this isomorphism,
		\begin{gather}
		\genu=\sum_{1\le m\le n}\genu_m\\
		p_i=\sum_{m_*\in K_i}\genw_{m_1}^2\cdots \genw_{m_i}^2\\
		\pi_{s,j}=\sum_{(m_*,l_*)\in K_{s,j}}\genw_{m_1}^2\cdots \genw_{m_s}^2\genu_{l_1}\cdots \genu_{l_j}\\
		\chi=\genw_1\cdots \genw_n
		\end{gather}
		where $K_i, K_{s,j}$ are as in Corollary \ref{AlgebraCorollaryQ}. 
	\end{prop}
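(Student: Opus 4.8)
The plan is to mirror the proof of the symplectic case in Section~\ref{C2Symplectic}; the only genuinely new ingredient is the bookkeeping of orientations, which is exactly what forces $SO(2n)$ and $SO(2n+1)$ to behave differently. I model $B_GSO(m)$ by the Grassmannian of oriented real $m$-planes in a complete real $C_2$-universe. A $C_2$-fixed plane $V$ splits canonically as $V=V^+\oplus V^-$, with $g$ acting by $+1$ on $V^+$ and by $-1$ on $V^-$; since $g$ then acts on $\det V$ by $(-1)^{\dim V^-}$, a $C_2$-fixed orientation of $V$ exists precisely when $\dim V^-$ is even, and a choice of one is a pair of orientations of $V^+$ and $V^-$ modulo simultaneous reversal. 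Hence
\begin{equation}
B_GSO(m)^{C_2}=\coprod_{\substack{a+b=m\\ b\ \textup{even}}}\big(BSO(a)\times BSO(b)\big)/\tau ,
\end{equation}
where $\tau$ is the diagonal orientation-reversal involution and there is no quotient when $a=0$ or $b=0$. For $m=2n$ both $a$ and $b$ are always even, whereas for $m=2n+1$ the first factor always has odd rank; this is the source of the difference between the two cases.

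I then set up the maximal torus inclusion with the common torus $T=SO(2)^n$, so that $B_GT=(B_GS^1)^n$ in the model of Subsection~\ref{n=1Comp}. The Weyl group is $W=C_2\wr\Sigma_n$ for $SO(2n+1)$, and for $SO(2n)$ it is the index-two subgroup of $C_2\wr\Sigma_n$ consisting of even sign changes. As in Section~\ref{C2ChernSection}, inner automorphisms act trivially on $B_GSO(m)$, so the inclusion-induced map factors through $W$-invariants and, by the idempotent decomposition, is the direct sum of the classical Borel isomorphism $H^*(BSO(m))\to H^*(BT)^W$ (\cite{BCM}) and the fixed-point map $H^*\big((B_GSO(m))^{C_2}\big)\to H^*\big((B_GT)^{C_2}\big)^W$. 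Both sides of the latter decompose by the value $\dim V^-=2k$; on the $k$-th summand the map is a tensor product of classical maximal-torus inclusions for orthogonal groups, with the orientation-reversal quotient $\tau$ accounting for the difference between $W(SO)$- and $W(O)$-invariance --- on odd-rank blocks $\tau$ acts trivially on rational cohomology, while on even-rank blocks it negates the Euler class. A direct comparison then shows the fixed-point map is an isomorphism on each summand, hence $H^*_G(B_GSO(m))\xrightarrow{\ \sim\ }H^*_G(B_GT)^W$. \emph{I expect this step to be the main obstacle:} unlike in the $U$ and $Sp$ cases, one must carefully reconcile the orientation-reversal quotients on the fixed-point components with the $SO$-versus-$O$ subtlety in the classical Borel theorem, and treat the degenerate components ($a=0$ or $b=0$) separately.

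It remains to identify $H^*_G(B_GT)^W$ algebraically. Writing $H^*_G(B_GT)=A_{\Q}[\genw_i,\genu_i]/(x\genu_i,\ \genu_i^2=\genu_i)$ as in Subsection~\ref{TheAlgebraPart}, the subgroup $C_2^n\subset C_2\wr\Sigma_n$ acts by $\genw_i\mapsto-\genw_i$ and fixes the $\genu_i$. For $SO(2n+1)$ the full $C_2^n$ is present, so $H^*_G(B_GSO(2n+1))=\big(A_{\Q}[\genw_i^2,\genu_i]/(x\genu_i,\genu_i^2=\genu_i)\big)^{\Sigma_n}$, and Corollary~\ref{AlgebraCorollaryQ} applied with $\genw_i$ replaced by $\genw_i^2$ produces the generators $\genu,p_i,\pi_{s,j}$ ($1\le i\le n$) and the relation set $S$. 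For $SO(2n)$ only even sign changes act; a monomial in the $\genw_i,\genu_i$ is fixed by all even sign changes iff all of its $\genw$-exponents have the same parity, so the invariant ring is generated by the previous classes together with the single new one $\chi=\genw_1\cdots\genw_n$, which satisfies $\chi^2=\sigma_n(\genw_1^2,\dots,\genw_n^2)=p_n$. Thus $p_n$ is eliminated, the generators become $\genu,p_i,\pi_{s,j},\chi$ with $1\le i,s<n$ subject to $S$ (read with $p_n=\chi^2$), and the displayed formulas for these classes are the defining ones.

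Finally, $SO(2n)\hookrightarrow SO(2n+1)$ is the identity on $T$ and the inclusion of Weyl groups, so it induces the inclusion of invariant subrings $H^*_G(B_GSO(2n+1))\hookrightarrow H^*_G(B_GSO(2n))$, an injection whose image is the $\chi$-free subring. For the restriction statement I restrict the maximal-torus formulas: by Subsection~\ref{n=1Comp} we have $\Res(\genu_i)=0$, while $\Res(\genw_i)$ is the Euler class of the $i$-th factor, so $\genu$ and every $\pi_{s,j}$ restrict to $0$ while $p_i$ and $\chi$ restrict to the classical Pontryagin and Euler classes; since these generate $H^*(BSO(m))$, which is a polynomial ring, lifting the generators gives a ring-theoretic section of the restriction map.
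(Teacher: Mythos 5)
Your proposal is correct and follows essentially the same route as the paper: the oriented Grassmannian model, the fixed-point decomposition (your $(BSO(a)\times BSO(b))/\tau$ with $b$ even is exactly the paper's $BZ_{2k,m}$ with $Z_{2k,m}=(SO(2k)\times SO(m-2k))\rtimes C_2$), factorization through Weyl-group invariants componentwise, and the appendix computation with $\genw_i$ replaced by $\genw_i^2$ plus the extra invariant $\chi=\genw_1\cdots\genw_n$ for even sign changes. The step you flag as the main obstacle is handled in the paper exactly as you suggest: rationally, the double cover $BSO(2k)\times BSO(m-2k)\to BZ_{2k,m}$ gives a transfer isomorphism onto the $\tau$-invariants, where $\tau$ negates Euler classes on even-rank blocks and acts trivially on odd-rank blocks, reducing everything to classical Borel for the $SO$ factors.
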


	\begin{proof}We use the oriented Grassmannian model for $B_GSO(n)$, consisting of $n$-dimensional oriented subspaces of $\R^{\infty\rho}$ where $\rho=1+\sigma$ is the real regular representation of $G=C_2$. The $G$ action sends an oriented subspace with basis $v_1,...,v_n$ to one with basis $gv_1,...,gv_n$. If $V\in B_GSO(n)$ is fixed by the $G$ action, then $V=V^+\oplus V^-$ where $V^+$ is an oriented subspace with $gv=v$ for every $v\in V^+$ and $V^-$ is an oriented subspace with $gv=-v$ for every $v\in V^-$. Moreover, since $g$ must act by an $SO(n)$ action on $V$, the dimension of $V^-$ must be even. In particular, if a $2$-dimensional subspace $V$ is fixed then $V=V^+$ or $V=V^-$. As for the uniqueness of the decomposition, note that if $W^+\oplus W^-= V^+\oplus V^-$ through an $SO(n)$ matrix $A$, then $A$ is block diagonal with blocks $B\in O(n-2k), C\in O(2k)$ and $\det(B)\det(C)=1$; $B$ gives $W^+= V^+$ while $C$ gives $W^-= V^-$. Thus
		\begin{equation}
		B_GSO(n)^{C_2}=\coprod_{k=0}^{n/2}BZ_{2k,n}
		\end{equation}
		where $Z_{k,n}$ is the subgroup of $O(k)\times O(n-k)$ consisting of pairs $(A,B)$ with $\det(A)\det(B)=1$. Note $Z_{k,n}=Z_{n-k,n}$ and $Z_{0,n}=Z_{n,0}=SO(n)$; if $0<k<n$ we have
		\begin{equation}
		Z_{k,n}=(SO(k)\times SO(n-k))\rtimes C_2
		\end{equation}
		with $C_2$ acting diagonally by conjugation.\smallbreak

		The maximal torus of $Z_{2k,2n}$ is $SO(2)^n$ with Weyl group $H\rtimes (\Sigma_k\times \Sigma_{n-k})\subseteq C_2^n\rtimes (\Sigma_k\times \Sigma_{n-k})$ where $H\subseteq C_2^n$ consists of elements with even number of coordinates equal to $-1$.\smallbreak
		
		The maximal torus of $Z_{2k,2n+1}$ is $SO(2)^n$ with Weyl group $C_2^n\rtimes (\Sigma_k\times \Sigma_{n-k})$.\smallbreak
		
		Following the $B_GU(n)$ case, the maximal torus inclusion $SO(2)^n\to SO(2n+1)$ induces $B_GSO(2)^n\to B_GSO(2n+1)$ which on the fixed points becomes:
		\begin{equation}
		\coprod^{2^{n}}BSO(2)^{n}\to \coprod_{k=0}^{n}BZ_{2k,2n+1}
		\end{equation}	
		with the LHS indexed over sign configurations as usual. Fixing the total amount $k$ of $-$ signs, the coproduct breaks into
		\begin{equation}
		\coprod^{\binom nk}BSO(2)^n\to BZ_{2k,2n+1}
		\end{equation}
		which induces
		\begin{equation}
		H^*(BZ_{2k,2n+1})\to \oplus^{\binom nk}H^*(BSO(2))^{\otimes n}
		\end{equation}
		Taking $C_2\wr \Sigma_n=C_2^n\rtimes \Sigma_n$ fixed points on the right hand side is equivalent to fixing a sign configuration and then taking $C_2^n\rtimes (\Sigma_k\times \Sigma_{n-k})$ fixed points i.e. fixing under the action of the Weyl group of $BZ_{2k,2n+1}$. Therefore we are reduced to proving $H^*(BZ_{2k,2n+1})=H^*(BSO(2)^n)^{C_2\wr (\Sigma_k\times \Sigma_{n-k})}$. Although Borel's Theorem is stated for connected Lie groups, it nonetheless works for $Z_{2k,2n+1}$ as it does for $O(n)$. This can be seen from the covering space
		\begin{equation}
			C_2\to BSO(2k)\times BSO(2n+1-2k)\to BZ_{2k,2n+1}
		\end{equation}
		for which the associated transfer map $$H^*(BZ_{2k,2n+1})\to H^*(BSO(2k)\times BSO(2n+1-2k))^{C_2}$$ is an isomorphism. Computing the right hand side shows that it's isomorphic to $H^*(BSO(2)^n)^{C_2\wr (\Sigma_k\times \Sigma_{n-k})}$ as desired.
		
		Similar arguments work for $H^*_G(B_GSO(2n))$ proving the maximal torus isomorphism in that case as well.
		
		Given the maximal torus isomorphism, we compute $H_G^*(B_GSO(2n+1))$ identically to $H_G^*(B_GSp(n))$ so we get the desired equivariant Pontryagin classes $p_i, \pi_{s,j}$. For $H_G^*(B_GSO(2n))$ we only want to fix under an even number of sign changes on the $\genw_i$ hence we get the equivariant Pontryagin classes plus the Euler class $\chi=\sigma_n(\genw_1,...,\genw_n)$. Note that $p_n=\chi^2$ and after removing $p_n$ there are no relations involving $\chi$ and the other generators.
	\end{proof}
	
	In the identification $B_GSO(2)=B_GU(1)$ we have
	\begin{gather}
	\chi=\gene_1
	\end{gather}

	The maps $B_GSO(n)\times B_GSO(m)\to B_GSO(n+m)$ work analogously to the symplectic case, replacing the $k_i,\kappa_{s,j}$ with $p_i,\pi_{s,j}$; the action on the Euler class is
	\begin{equation}
	\chi\mapsto \chi\otimes\chi
	\end{equation}
	
	\begin{prop}\label{ForgetExplained}The complexification map $B_GSO(2n)\to B_GU(2n)$ induces on cohomology:
		\begin{gather}
		\genu\mapsto \genu\\
		\gene_{2i+1}, \genc_{2s+1,j}\mapsto 0\\
		\gene_{2i}\mapsto (-1)^ip_i\\
		\genc_{2s,j}\mapsto (-1)^s\pi_{s,j}
		\end{gather}
		The forgetful map $B_GU(n)\to B_GSO(2n)$ induces on cohomology:
		\begin{gather}
		\genu\mapsto \genu\\
		p_i\mapsto \sum_{a+b=2i}(-1)^{a+i}\gene_a\gene_b\\
		\chi\mapsto \gene_n\\
		\pi_{1,j}\mapsto \gene_1\genc_{1,j}-u\genc_{2,j-1}+(j-2)\genc_{2,j}+(j-1)\genc_{2,j-1}\\
		\pi_{s,j}\mapsto \gene_s\genc_{s,j}+\cdots
		\end{gather}
		where $\cdots$ denotes the same homogeneous polynomial as the $\cdots$ in Proposition \ref{QuaterExplained}.
	\end{prop}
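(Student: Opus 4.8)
The plan is to deduce both formulas from the maximal-torus isomorphisms already established, along the lines of the proof of Proposition~\ref{QuaterExplained}: pass to maximal tori, read off the induced map on torus cohomology, substitute into the symmetric-polynomial descriptions of the generators, and then rewrite the answers using the straightening algorithm of Appendix~\ref{appen}.

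For the complexification map $f\colon B_GSO(2n)\to B_GU(2n)$, I would first observe that $f$ carries the maximal torus $SO(2)^n$ into the maximal torus $U(1)^{2n}$ by the $n$-fold product of $z\mapsto(z,\bar z)$, since complexifying a planar rotation splits it into a conjugate pair of eigenlines. This gives a commutative square relating $f$ to the two torus inclusions; applying the maximal-torus isomorphism for $B_GSO(2n)$ and, through Proposition~\ref{C2ChernIso}, the one for $B_GU(2n)$, the problem reduces to the ring map $H^*_G(B_GU(1)^{2n})\to H^*_G(B_GU(1)^n)$. Since conjugation on $B_GU(1)$ induces $\genu\mapsto\genu$ and $\genw\mapsto-\genw$, this map sends the generators by $\genw_m\mapsto\genw_m$, $\genw_{m+n}\mapsto-\genw_m$, $\genu_m\mapsto\genu_m$, $\genu_{m+n}\mapsto\genu_m$ for $1\le m\le n$, the two factor systems being matched as in the proof of Proposition~\ref{QuaterExplained}. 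I would obtain the vanishing $\gene_{2i+1},\genc_{2s+1,j}\mapsto 0$ conceptually: the complexification of a genuine real $G$-bundle is canonically isomorphic to its conjugate, so $c\circ f\simeq f$ with $c$ the conjugation self-map of $B_GU(2n)$, and $c$ acts by $\gene_i\mapsto(-1)^i\gene_i$, $\genc_{s,j}\mapsto(-1)^s\genc_{s,j}$ (it negates precisely the degree-two factors occurring in each generator), so the odd-index classes are killed. For the surviving generators I would substitute and simplify: the identity $\prod_m(1-t\genw_m)(1+t\genw_m)=\prod_m(1-t^2\genw_m^2)$ yields $\gene_{2i}\mapsto(-1)^ip_i$, and extracting coefficients from $\prod_m(1+x\genw_m+y\genu_m)$ after substitution---using the idempotency $\genu_m^2=\genu_m$ to collapse a colliding pair of $\genu$'s and the sign to collapse a colliding pair of $\genw$'s, while the off-diagonal monomials cancel in $\pm$-pairs---produces $f^*\genc_{2s,j}$ in terms of $\pi_{s,j}$.

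For the forgetful map $g\colon B_GU(n)\to B_GSO(2n)$ the two maximal tori are literally identified via $U(1)=SO(2)$, so $g^*$ is just the inclusion of Weyl-invariant subrings $H^*_G(B_GT)^{W_{SO(2n)}}\hookrightarrow H^*_G(B_GT)^{\Sigma_n}$, and the content is to rewrite each $SO$-generator, regarded inside $H^*_G(B_GU(n))$, in terms of $\{\genu,\gene_i,\genc_{s,j}\}$. Here $\genu=\sum_m\genu_m$ on both sides, $\chi=\sigma_n(\genw)=\gene_n$, and the generating identity above gives $p_i\mapsto\sum_{a+b=2i}(-1)^{a+i}\gene_a\gene_b$. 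For $\pi_{s,j}$, which in torus coordinates is $\sum_{(m_*,l_*)\in K_{s,j}}\genw_{m_1}^2\cdots\genw_{m_s}^2\genu_{l_1}\cdots\genu_{l_j}$, I would identify $\gene_s\genc_{s,j}$ as its leading term in the monomial order of Appendix~\ref{appen} and then run the straightening algorithm to produce the correction term written $\cdots$; for $s=1$ this is short enough to do by hand, giving the displayed formula.

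The step I expect to be the real obstacle is this final translation back into the generators in both cases: the combinatorial cancellation computing $f^*\genc_{2s,j}$ for complexification, and, for the forgetful map, the expression of $\pi_{s,j}$ in the $B_GU(n)$-generators for general $s$, which genuinely needs the appendix algorithm. A secondary point requiring care is to check that the substituted classes really lie in the intended Weyl-invariant subring---invariant under the sign-change group of $SO(2n)$, not merely that of $SO(2n+1)$---which is automatic from the $\pm$-pairing of the $\genw$'s, equivalently the self-conjugacy of complexified bundles.
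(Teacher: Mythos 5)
Your argument is essentially the paper's own proof: the paper likewise reduces complexification to the commutative square of maximal tori with $a\mapsto(a,\bar a)$ (invoking conjugacy of maximal tori to ignore the ordering of coordinates), reads off the substitution $\genw_i\mapsto\genw_i$, $\genw_{n+i}\mapsto-\genw_i$, $\genu_i\mapsto\genu_i$, $\genu_{n+i}\mapsto\genu_i$, and states that this implies the displayed formulas, while the forgetful map is treated exactly as you propose, through the identification $U(1)=SO(2)$ of maximal tori and the straightening algorithm of Appendix \ref{appen}, in parallel with Proposition \ref{QuaterExplained}. Your write-up in fact supplies more detail than the paper's very short proof (the self-conjugacy explanation of why the odd classes die and the generating-function bookkeeping for the even ones), so it matches and slightly elaborates the paper's route.
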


	\begin{proof}To understand the effect of complexification $SO(2n)\to U(2n)$ we use the nonstandard maximal torus $T^2\to U(2)$ making the following diagram commute:
		\begin{center}
			\begin{tikzcd}
			S^1\ar[d,equals]\ar[r,"\text{$a\mapsto (a,\bar a)$}"]&T^2\ar[d]\\
			SO(2)\ar[r]&U(2)
			\end{tikzcd}
		\end{center}
	where the map in the bottom row is complexification. More generally we have the commutative diagram
		\begin{center}
			\begin{tikzcd}[column sep=10em]
			T^n\ar[d]\ar[r,"\text{$(a_1,...,a_n)\mapsto (a_1,\bar a_1,...,a_n,\bar a_n)$}"]&T^{2n}\ar[d]\\
			SO(2n)\ar[r]&U(2n)
			\end{tikzcd}
		\end{center}
Any two maximal tori are conjugate hence induce the same map in $H^*_G(B_GH)$ so we get
		\begin{center}
			\begin{tikzcd}
			A_{\Q}[\genw_1,...,\genw_n,\genu_1,...,\genu_n]/x\genu_i&A_{\Q}[\genw_1,...,\genw_{2n},\genu_1,...,\genu_{2n}]/x\genu_i\ar[l]\\
			H^*_GSO(2n)\ar[u,hook]&H^*_GU(2n)\ar[u,hook]\ar[l]
			\end{tikzcd}
		\end{center}
		In the top row, $\genw_i\mapsto \genw_i, \genw_{n+i}\mapsto -\genw_i, \genu_i\mapsto \genu_i, \genu_{n+i}\mapsto \genu_i$ for $i\le n$. This implies the formulas for the effect of the complexification map.	\end{proof}
	
	We only have one map $B_GSO(n)\to B_GSO(n+1)$, given by direct sum with the trivial representation; direct sum with the $\sigma$ representation does not result in a $C_2$ equivariant map. There is however a $C_2$-equivariant map $B_GSO(n)\to B_GSO(n+2)$ given by adding $2\sigma$.
	
	The map $B_GSO(n)\xrightarrow{\oplus 1} B_GSO(n+1)$ induces
	\begin{gather}
	\genu\mapsto y+\genu\\
	p_i\mapsto p_i\\
	\pi_{s,j}\mapsto \pi_{s,j}+\pi_{s,j-1}\\
	\chi\mapsto 0
	\end{gather}
	(with the usual conventions on the RHS, including that $\pi_{s,0}=yp_s$). 
	
	The map $B_GSO(n)\xrightarrow{\oplus 2\sigma} B_GSO(n+2)$ induces:
	\begin{gather}
	\genu\mapsto \genu\\
	p_i\mapsto p_i\\
	\pi_{s,i}\mapsto \pi_{s,i}\\
	\chi\mapsto 0
	\end{gather}
	(with the usual conventions on the RHS).

	So we can distinguish between $B_G^+SO, B_G^-SO$ and $B_G^{\pm}SO$ as usual. The results here are then parallel to the $B_G^+U, B_G^-U$ and $B_G^{\pm}U$ cases respectively, by replacing $\genu,\gene_i,\genc_{s,i}$ by $\genu,p_i,\pi_{s,i}$; the Euler class $\chi$ is not stable. See subsection \ref{StableSummary} for a summary.

	\section{Orthogonal groups}\label{Orthogonal}
	
	The groups $O(2n), SO(2n+1), O(2n+1)$ have the same maximal torus and Weyl group. The resulting nonequivariant classifying spaces have isomorphic cohomology rings and the maximal torus isomorphism works in all cases (even though the orthogonal groups are disconnected). In this subsection, we shall see that this observation doesn't generalize to the $C_2$ equivariant case.
	
	For $B_GO(n)$ we compute the fixed points:
	\begin{equation}
	B_GO(n)^{C_2}=\coprod_{m+k=n}BO(m)\times BO(k)
	\end{equation}
	
	We consider the map
	\begin{equation}
	H^*_G(B_GO(n))\to H^*_G(B_GT)^W
	\end{equation}
	where $T\subseteq O(n)$ is a maximal torus and $W$ the Weyl group. We can see directly that for $n\le 3$ the RHS has smaller dimension compared to the LHS even before taking fixed points. So this map cannot be an isomorphism for $n\le 3$.
	
	We can also see from the fixed point computation that $H^*_G(B_GO(n))$ is never isomorphic to $H^*_G(B_GO(n+1))$ for any $n$.\medbreak
	
	There is however a natural description of the characteristic classes for the group $O(2n+1)$: since $O(2n+1)=SO(2n+1)\times O(1)$ we have that
	\begin{equation}
		B_GO(2n+1)=B_GSO(2n+1)\times B_GO(1)
	\end{equation}
hence 
	\begin{equation}
	H^*_G(B_GO(2n+1))=H^*_G(B_GSO(2n+1))\boxtimes_{A_{\Q}} H^*_G(B_GO(1))
\end{equation}
and
	\begin{equation}
H^*_G(B_GO(1))=A_{\Q}[\gend]/(\gend^2=\gend, x\gend)
\end{equation}
for $\gend$ in degree $0$. So the $C_2$-characteristic classes for $O(2n+1)$ are $\genu, p_i, \pi_{s,j}, \gend$
	
	The map $B_GO(2n)\to B_GO(2n+1)$ is always a surjection in cohomology (this follows by looking at the fixed points) so $H^*_G(B_GO(2n))$ is a quotient of $H^*_G(B_GO(2n+1))$. For $n=1$ we get: 
	\begin{gather}
H^*_G(B_GO(2))=\frac{A_{\Q}[\genu,\gend,p_1]}{x\gend, \genu^2=\genu, \gend^2=\gend, \genu\gend=\genu, yp_1=\gend p_1}
\end{gather}
More generally, in degree $0$:
\begin{equation}
H^0_G(B_GO(2n))=H^0_G(B_GO(2n+1))/\genu(\genu-1)\cdots (\genu-n+1)(1-\gend)
\end{equation}
For $n=2$ the other relations in higher degrees are $p_1\genu^2(1-\gend)=0$ and $p_2(y-\gend-\genu+\genu\gend)=0$. 

Finally, stability for orthogonal groups is understood from $B_GO=B_GSO\times B_GO(1)$.

	\section{Special unitary groups}\label{SU}
	
	\begin{prop}	The maximal torus inclusion $U(1)^{n-1}\to SU(n)$ induces an isomorphism 
		\begin{equation}
		H^*_G(B_GSU(n))\to H^*_G(B_GU(1)^{n-1})^{\Sigma_n}
		\end{equation}
		
	\end{prop}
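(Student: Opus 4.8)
The plan is to run the same fixed-point strategy used for $B_GU(n)$ and $B_GSp(n)$. First I would identify the $C_2$-fixed points of $B_GSU(n)$. Using the Grassmannian model, a fixed point is an $n$-dimensional subspace $V=V^+\oplus V^-$ of $\C^{\infty\rho}$ together with the extra datum that the determinant of the defining $U(n)$-matrix is $1$; as in the $SO$ case this forces a compatibility condition on the orientations/determinants of the two summands, so that
\begin{equation}
B_GSU(n)^{C_2}=\coprod_{m=0}^n W_{m,n}
\end{equation}
where $W_{m,n}$ is the subgroup of $U(m)\times U(n-m)$ consisting of pairs $(A,B)$ with $\det(A)\det(B)=1$. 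For $0<m<n$ this is $(SU(m)\times SU(n-m))$ extended by a circle acting diagonally, i.e. up to homotopy $BW_{m,n}$ has the rational cohomology of $BSU(m)\times BSU(n-m)\times BS^1$, equivalently of $BU(m)\times BU(n-1)$ modulo identifying one generator; but the cleanest route is to observe that the covering $S^1\to BSU(m)\times BSU(n-m)\to BW_{m,n}$ (or the analogous statement after passing to a maximal torus) lets Borel's theorem apply even though $W_{m,n}$ is not connected, exactly as was done for $Z_{2k,2n+1}$ in the Pontryagin section.

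Next I would run the torus comparison. The maximal torus of $SU(n)$ is $T=U(1)^{n-1}$ (sitting inside $U(1)^n$ as the subtorus where the product of coordinates is $1$), with Weyl group $\Sigma_n$ acting by permuting the $n$ coordinates of the ambient $U(1)^n$. The inclusion $B_GT\hookrightarrow B_GSU(n)$ is $W$-equivariant and $W$ acts trivially on the target (inner automorphisms act trivially on $B_GH$ up to homotopy, just as cited from \cite{BCM} in the general maximal-torus discussion), so the map factors through $W$-fixed points and splits, via the idempotent decomposition of \cite{GM95}, into the two pieces
\begin{gather}
H^*(BSU(n))\to H^*(BT)^{\Sigma_n}\\
H^*(B_GSU(n)^{C_2})\to H^*((B_GT)^{C_2})^{\Sigma_n}.
\end{gather}
The first map is an isomorphism by the classical Borel theorem. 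For the second, one repeats the $B_GU(n)$ bookkeeping: $(B_GT)^{C_2}$ is a disjoint union over sign configurations of copies of $\prod BS^1$, $\Sigma_n$ permutes configurations and tensor factors, taking $\Sigma_n$-fixed points amounts to fixing one configuration with say $m$ plus-signs and taking $\Sigma_m\times\Sigma_{n-m}$-fixed points, and on each such piece the map is the one induced by the maximal torus inclusion into $W_{m,n}$, which is an isomorphism by the (extended) Borel theorem from the first paragraph. Since both summands match up, $H^*_G(B_GSU(n))\to H^*_G(B_GT)^{\Sigma_n}$ is an isomorphism.

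The main obstacle is the first paragraph: pinning down $B_GSU(n)^{C_2}$ correctly and verifying that Borel's theorem survives for the disconnected groups $W_{m,n}$. The determinant-one constraint couples the two fixed summands in a way that has no analogue in the $U(n)$ case, and one must check that this coupling is exactly a single circle's worth of data so that the rational cohomology still behaves like an honest torus-fixed-point computation — the transfer argument along the finite cover (or, on the torus level, the observation that $\Sigma_n$ still acts through the subtorus) is what makes this go through, paralleling the $Z_{2k,2n+1}$ argument. Once the isomorphism is established, the stated consequences — surjectivity of $H^*_G(B_GU(n))\to H^*_G(B_GSU(n))$, the relations $\gene_1=\genc_{1,n-1}=0$, and the extra relation $\genu^2=2\genu$ when $n=2$ coming from $SU(2)=Sp(1)$ — follow by comparing the torus presentations: $H^*_G(B_GSU(n))$ is the $\Sigma_n$-invariants of $A_{\Q}[\genu_i,\genw_i]/(x\genu_i)$ subject to the relation imposing $\sum\genw_i=0$ (equivalently $\gene_1=0$) and the corresponding degeneration of $\genc_{1,n-1}$, with the $n=2$ case reducing to the already-computed $H^*_G(B_GSp(1))$.
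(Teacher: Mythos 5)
Your overall strategy (compute fixed points, split via the idempotent decomposition, reduce to Borel's theorem on each component) is the same as the paper's, but your fixed-point identification is wrong, and that is exactly where the content of this proposition lies. An involution acting on $V=V^+\oplus V^-$ has determinant $(-1)^{\dim V^-}$, so the determinant-one constraint does not merely couple the determinants of the two summands: it forces $\dim V^-$ to be even. Hence $B_GSU(n)^{C_2}=\coprod_{k=0}^{\lfloor n/2\rfloor}BZ'_{2k,n}$ with $Z'_{2k,n}=\{(A,B)\in U(2k)\times U(n-2k):\det A\det B=1\}$; the components you list for odd $m$ do not exist. The same parity constraint appears on the torus side: $T=U(1)^{n-1}$ sits inside $U(1)^n$ as the product-one subtorus, so its $2$-torsion consists of the $2^{n-1}$ sign configurations with an \emph{even} number of minus signs, and $(B_GT)^{C_2}$ has only those components. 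Your matching of configurations with $m$ plus signs against $W_{m,n}$ for every $0\le m\le n$ therefore compares components that are absent from both sides, and the count that actually makes the $\Sigma_n$-fixed-point argument close up --- $\binom{n}{2k}$ configurations mapping to $BZ'_{2k,n}$, with $\sum_k\binom{n}{2k}=2^{n-1}$ --- never appears in your bookkeeping.

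A secondary point: $W_{m,n}=S(U(m)\times U(n-m))$ is connected (the map $\pi_1(U(m)\times U(n-m))\to\pi_1(U(1))$ induced by the product of determinants is onto), so no analogue of the $Z_{2k,2n+1}$ covering-space/transfer argument is needed; Borel's theorem applies directly, with maximal torus $U(1)^{n-1}$ and Weyl group $\Sigma_{2k}\times\Sigma_{n-2k}$. Moreover your sequence $S^1\to BSU(m)\times BSU(n-m)\to BW_{m,n}$ is a fibration with circle fiber, not a covering, so the finite-cover transfer you invoke is not available in any case. Finally, concerning the consequences you sketch at the end (which are not part of the stated proposition): imposing only $\sum_i\genw_i=0$ is not the full torus presentation; one must also rewrite $\genu_n$ in terms of $\genu_1,\dots,\genu_{n-1}$ via the iterated tensor-product formula for $\genu$, and it is this degree-zero relation that produces, for instance, the extra relation $\genu^2=2\genu$ when $n=2$.
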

	
	\begin{proof}

		
		Analogously to $B_GSO(n)$,
		\begin{gather}
		B_GSU(n)^{C_2}=\coprod_{k=0}^{n/2}BZ'_{2k,n}
		\end{gather}
		where $Z'_{k,n}\subseteq U(k)\times U(n-k)$ consists of $(A,B)$ with $\det(A)\det(B)=1$. So $Z'_{k,n}=Z'_{n-k,n}$ and $Z'_{0,n}=SU(n)$. If $0<k<n$,
		\begin{equation}
		Z'_{k,n}=(SU(k)\times SU(n-k))\rtimes S^1
		\end{equation}
		where $S^1$ acts diagonally by conjugation. The maximal torus in $Z'_{2k,n}$ is $U(1)^{n-1}$ and the Weyl group is $\Sigma_{2k}\times \Sigma_{n-2k}$.
		
		
		Taking fixed points in $B_GU(1)^{n-1}\to B_GSU(n)$ gives:
		\begin{equation}
		\coprod^{2^{n-1}}BU(1)^{n-1}\to \coprod_{k=0}^{n/2}BZ'_{2k,n}
		\end{equation}
		and as in $B_GSO(n)$, the coproduct breaks into
		\begin{equation}
		\coprod^{\binom{n}{2k}}BU(1)^{n-1}\to BZ'_{2k,n}
		\end{equation}
		In cohomology:
		\begin{equation}
		H^*(BZ'_{2k,n})\to \oplus^{\binom{n}{2k}}H^*(BU(1))^{\otimes (n-1)}
		\end{equation}
	The group	$\Sigma_n$ acts on the right by permuting the sign configuration and the tensor factors $H^*(BU(1))=\Q[a_i]$ where $a_n=-(a_1+\cdots+a_{n-1})$.	Thus, taking $\Sigma_n$ fixed points is equivalent to fixing a sign configuration, say $(-,...,-,+,...,+)$, and then taking $\Sigma_{2k}\times \Sigma_{n-2k}$ fixed points, which is exactly the Weyl group of $Z'_{2k,n}$. This establishes the maximal torus isomorphism for $B_GSU(n)$.
	\end{proof}
	
	In conclusion, $H^*_G(B_GSU(n))=R'^{\Sigma_n}$ where $R'=A_{\Q}[\genw_i,\genu_i]_{1\le i\le n}$ modulo the relations \begin{gather}
	\genu_n= \frac{(-1)^{n+1}+1}2y+(-1)^n\sum_{i=1}^{n-1}(-2)^{i-1}\sigma_i(\genu_1,...,\genu_{n-1})\\
	\genw_n=-\genw_1\cdots-\genw_{n-1}
	\end{gather}
	Using the same definitions for $\genu,\gene_i,\genc_{s,j}$ in terms of $\genu_i,\genw_i$ as in $B_GU(n)$, we can see that
	\begin{equation}
	\gene_1=\genc_{1,n-1}=0
	\end{equation}
	There are more relations however: for example, $SU(2)=Sp(1)$ so we need: 
	\begin{equation}
	\genu^2=2\genu
	\end{equation}
	as an additional relation. The identification $SU(2)=Sp(1)$ then becomes:
	\begin{gather*}
	\frac{A_{\Q}[\gene_2,\genu]}{\genu^2=2\genu, x\genu}\to \frac{A_{\Q}[k_1,\genu]}{\genu^2=\genu}\\
	\gene_2\mapsto k_1\\
	\genu\mapsto 2\genu
	\end{gather*} The rest of the relations for each $B_GSU(n)$ can be computed algorithmically. 

	As for stability, we have the map $B_GSU(n)\to B_GSU(n+1)$ given by direct sum with a trivial representation. The map $B_GSU(n)\to B_GSU(n+1)$ given by direct sum with a $\sigma$ representation is not $C_2$ equivariant, so we instead use the map $B_GSU(n)\to B_GSU(n+2)$ adding $2\sigma$.

	Analogously to the $B_GU$ case, we can distinguish between spaces $B_G^+SU, B_G^-SU$ and $B_G^{\pm}SU$. Under the inclusion $B_G^{\pm}SU\to B_G^{\pm}U$, $\gene_1=\genc_1=0$.

\appendix

\section{Symmetric Polynomials with Relations}\label{appen}

For a fixed commutative ring $k$ consider the graded $k$-algebra $R$:
\begin{equation}
R=R(\genw_1,...,\genw_n,\genu_1,...,\genu_n):=k[\genw_i,\genu_i]/(\genu_i^2=\genu_i)
\end{equation}
whose generators have degrees $|\genw_i|=1$ and $|\genu_i|=0$. The group $\Sigma_n$ acts on $R$ by permuting the $\genw_i$ and $\genu_i$ separately.\medbreak

Any monic monomial in $R$ takes the unique form
\begin{equation}
\genw_1^{a_1}\cdots \genw_n^{a_n}\genu_1^{\epsilon_1}\cdots \genu_n^{\epsilon_n}
\end{equation}
for $a_i\ge 0$ and $\epsilon_i=0,1$. We order the monic monomials of the same degree lexicographically by the powers $a_1,...,a_n,\epsilon_1,...,\epsilon_n$. For a homogeneous element $p\in R$, we consider the (nonzero) monomials $p_i$ in $p$ and let $p'_i$ be the corresponding monic monomials; the greatest of the $p'_i$ is the dominant term $\dom(p)$ of $p$. We compare homogeneous polynomials using their dominant terms (ignoring their coefficients).

\begin{prop}\label{AlgebraPropositionGeneral}We have the $k$-algebra presentation:
	\begin{equation}
	R^{\Sigma_n}=\frac{k[\genc_{s,i}]}{\genc_{s,i}\genc_{t,j}=\binom{\min(i+j+s,n)-t}{j}\genc_{t,0}\genc_{s,\min(i+j,n-s)}+p_{s,i,t,j,n}}
	\end{equation}
	where:
	\begin{itemize}
		\item For each pair of nonnegative indices $s,i$ such that $s+i\le n$ we have one generator $\genc_{s,i}$ of degree $s$ given by:
		\begin{gather}
		\genc_{s,i}=\sum_{(m_*,l_*)}\genw_{m_1}\cdots \genw_{m_s}\genu_{l_1}\cdots \genu_{l_i}
		\end{gather}
		where $(m_*,l_*)$ ranges over pairs of disjoint partitions $1\le m_1<\cdots<m_s\le n$ and $1\le l_1<\cdots<l_i\le n$. 
		
		\item For each quadruple of indices $s,i,t,j$ such that $0\le s\le t\le s+i$, $0<i\le n-s$ and $0<j\le n-t$, we have a relation:
		\begin{gather}
		\genc_{s,i}\genc_{t,j}=\binom{\min(i+j+s,n)-t}{j}\genc_{t,0}\genc_{s,\min(i+j,n-s)}+p_{s,i,t,j,n}
		\end{gather}
		where $p_{s,i,t,j,n}$ is a homogeneous polynomial in $R^{\Sigma_n}$ smaller than $\genc_{t,0}\genc_{s,\min(i+j,n-s)}$.
		
	\end{itemize}
	The elements $\gene_i=\genc_{i,0}$ are algebraically independent over $k$ and $R^{\Sigma_n}$ is finite over $k[\gene_1,...,\gene_n]$ so we have the equality of Krull dimensions:
	\begin{equation}
	\dim(R)=\dim(k)+n
	\end{equation}
	A $k$-module basis of $R^{\Sigma_n}$ consists of the elements
	\begin{equation}
	\prod_{i=1}^n\gene_i^{r_i}\prod_{s=0}^n\prod_{i=1}^{n-s} \genc_{s,i}^{\epsilon_{s,i}}
	\end{equation}
	where the $\epsilon_{s,i}=0,1$ are such that whenever $\epsilon_{s,i}=\epsilon_{t,j}=1$ for $1\le i,j$ and $s\le t$ then we must also have $s+i<t$.
\end{prop}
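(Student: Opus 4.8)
The plan is to prove Proposition~\ref{AlgebraPropositionGeneral} by an explicit "straightening" or Gröbner-type argument with respect to the lexicographic dominant-term order on monic monomials of $R$. First I would establish that the $\genc_{s,i}$ generate $R^{\Sigma_n}$ as a $k$-algebra. The key observation is that $R^{\Sigma_n}$ has a $k$-basis given by orbit sums of monic monomials $\genw_1^{a_1}\cdots\genw_n^{a_n}\genu_1^{\epsilon_1}\cdots\genu_n^{\epsilon_n}$; such an orbit sum is determined by the multiset of pairs $(a_i,\epsilon_i)$. Given such an orbit sum, I would identify its dominant term and then subtract off an appropriate product of the generators $\genc_{s,i}$ having the same dominant term. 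Concretely, if the exponent data, sorted in decreasing lexicographic order, is $(a_1,\epsilon_1)\ge\cdots\ge(a_n,\epsilon_n)$, one builds a monomial in the $\genc_{s,i}$ analogous to the classical expression of a monomial symmetric function in elementary symmetric functions, where the role of "$\genw$-part" is played by the $a_i$ and the "$\genu$-part" (the idempotents, which only contribute in degree $0$) is tracked by the $i$ index of $\genc_{s,i}$. Since the dominant terms strictly decrease and the relevant graded pieces are finite-dimensional, this terminates and shows the $\genc_{s,i}$ generate. This also simultaneously produces the claimed spanning set: the products $\prod\gene_i^{r_i}\prod\genc_{s,i}^{\epsilon_{s,i}}$ subject to the stated admissibility condition have distinct dominant terms and span.

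Next I would verify the quadratic relations. Given $\genc_{s,i}\genc_{t,j}$ with $0\le s\le t\le s+i$, I would compute its dominant term directly: multiplying the dominant monomials of $\genc_{s,i}$ and $\genc_{t,j}$, the $\genw$-exponents add (giving a partition with $t$ entries equal to $2$, $s$ entries... — more precisely one has to be careful because $s\le t\le s+i$ forces overlap), and the $\genu_i^2=\genu_i$ relation collapses squared idempotents. A careful bookkeeping shows the dominant term of $\genc_{s,i}\genc_{t,j}$ equals that of $\binom{\min(i+j+s,n)-t}{j}\genc_{t,0}\genc_{s,\min(i+j,n-s)}$, with the binomial coefficient arising as the number of ways the disjoint partitions indexing the two factors can recombine into the partition of the product; then $p_{s,i,t,j,n}:=\genc_{s,i}\genc_{t,j}-\binom{\cdots}{j}\genc_{t,0}\genc_{s,\min(i+j,n-s)}$ is, by the generation step, rewritable as a polynomial in the $\genc_{s,i}$, strictly smaller in dominant term. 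One must check these relations suffice to reduce every product of generators to the admissible spanning set — i.e. that the rewriting system is confluent — which follows because any inadmissible product $\genc_{s,i}\genc_{t,j}$ (with $s+i\ge t$, $s\le t$) is exactly covered by a relation, and applying relations strictly lowers the dominant term, so no infinite rewriting chains exist and the normal forms are the admissible monomials.

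Finally, for the algebraic independence of $\gene_1,\dots,\gene_n$ and the finiteness of $R^{\Sigma_n}$ over $k[\gene_1,\dots,\gene_n]$: independence follows from the nonequivariant case by sending all $\genu_i\mapsto 0$, which maps $R^{\Sigma_n}$ onto $k[\genw_i]^{\Sigma_n}=k[\gene_1,\dots,\gene_n]$; finiteness follows because in the admissible basis every exponent $\epsilon_{s,i}\in\{0,1\}$ and the number of factors $\genc_{s,i}$ with $i>0$ that can appear is bounded (the admissibility condition $s+i<t$ forces at most $n$ such factors and bounds their degrees), so $R^{\Sigma_n}$ is spanned over $k[\gene_*]$ by finitely many monomials. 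The Krull dimension statement $\dim R=\dim k+n$ is then immediate. The main obstacle I anticipate is the combinatorial identification of the dominant term of $\genc_{s,i}\genc_{t,j}$ and the precise value of the binomial coefficient $\binom{\min(i+j+s,n)-t}{j}$ — getting the disjointness/overlap counting exactly right under the constraint $s\le t\le s+i$, and handling the truncation cases where $i+j+s>n$ or $i+j>n-s$. This is where the bulk of the care (and the reason for the computer implementation) lies; the rest is a standard leading-term induction once the order and the relations are in place.
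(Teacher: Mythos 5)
Your proposal is correct in outline and takes essentially the same route as the paper's appendix proof: a lexicographic dominant-term straightening argument, showing each orbit-maximal monomial is the dominant term of a unique admissible product of the $\genc_{s,i}$ (so these products form a $k$-basis), with the quadratic relations obtained by rewriting inadmissible pairs and noting each rewrite strictly lowers the dominant term. The case analysis you defer (handling the fact that dominant terms do not multiply naively when both factors involve the idempotents $\genu_i$, and pinning down the binomial coefficient) is exactly the content of the paper's proof, so your anticipated ``main obstacle'' is where the paper, too, spends its effort.
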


Three remarks about the polynomials $p_{n,s,i,t,j}$ appearing in the relations:

\begin{itemize}
	\item The proof of Proposition \ref{AlgebraPropositionGeneral} provides an algorithm for computing these polynomials. This algorithm has been implemented in a computer program available \href{https://github.com/NickG-Math/Symmetric_Polynomials}{here} (executable files are available \href{https://github.com/NickG-Math/Symmetric_Polynomials/releases}{here} for a quick demonstration).
	\item The coefficients of the polynomials $p_{s,i,t,j,n}$ are in the image of the initial homomorphism $\Z\to k$ (i.e. they are independent of $k$).
	\item If $n\ge i+j+s$ then $p_{s,i,t,j,n}$ is independent of $n$ and moreover the relation on $\genc_{s,i}\genc_{t,j}$  is independent of $n$, taking the simpler form:
	\begin{gather}
	\genc_{s,i}\genc_{t,j}=\binom{i+j+s-t}{j}\gene_t\genc_{s,i+j}+p_{s,i,t,j}
	\end{gather}
\end{itemize}

The elements $\gene_i=\genc_{i,0}$ generating $k[\gene_1,...,\gene_n]$ are the elementary symmetric polynomials on the variables $\genw_i$:
\begin{gather}
\gene_i=\sigma_i(\genw_1,...,\genw_n)=\sum_{1\le m_1<\cdots<m_i\le n}\genw_{m_1}\cdots \genw_{m_i}
\end{gather}
Similarly, the elements $\genc_{0,i}$ generating $R^{\Sigma_n}_0$ are the elementary symmetric polynomials on the variables $\genu_i$:
\begin{gather}
\genc_{0,i}=\sigma_i(\genu_1,...,\genu_n)=\sum_{1\le l_1<\cdots<l_i\le n}\genu_{l_1}\cdots \genu_{l_i}
\end{gather}

\begin{prop}\label{AlgebraProposition2General}Any set of homogeneous algebra generators of $R^{\Sigma_n}$ over $R^{\Sigma_n}_0$ has cardinality at least $n+\binom n2$, which is the cardinality of the generating set $\{\gene_s,\genc_{s,j}\}_{s,j>0}$.
\end{prop}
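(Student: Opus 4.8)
The plan is to run the standard graded indecomposables argument. Write $B:=R^{\Sigma_n}_0$ for the degree-$0$ subring and $I:=\bigoplus_{d>0}R^{\Sigma_n}_d$ for the augmentation ideal over $B$. A degree-$0$ member of a generating set lies in $B$ and is redundant, and a homogeneous set of positive-degree elements generates $R^{\Sigma_n}$ as a $B$-algebra precisely when its classes generate the graded $B$-module of indecomposables $I/I^2$, whose degree-$d$ piece is $R^{\Sigma_n}_d$ modulo $\sum_{0<e<d}R^{\Sigma_n}_eR^{\Sigma_n}_{d-e}$. Writing $\mu_B$ for the minimal number of homogeneous $B$-module generators, it therefore suffices to prove $\mu_B\bigl((I/I^2)_d\bigr)\ge n-d+1$ for every $1\le d\le n$: summing gives $\sum_{d=1}^{n}(n-d+1)=n+\binom n2$, which is exactly the number of elements of $\{\gene_s,\genc_{s,j}\}_{s,j>0}$ of degree $d$, namely $\gene_d$ together with $\genc_{d,1},\dots,\genc_{d,n-d}$.

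The matching upper bound $\mu_B\bigl((I/I^2)_d\bigr)\le n-d+1$ I would read off from the monomial $k$-basis of Proposition \ref{AlgebraPropositionGeneral}: since $|\gene_i|=i$ and $|\genc_{s,i}|=s$, any admissible basis monomial of degree $d$ whose positive-degree part is not a single generator factors as a product of two positive-degree symmetric polynomials, hence lies in $I^2$; the remaining basis monomials are degree-$0$ multiples of one of $\gene_d,\genc_{d,1},\dots,\genc_{d,n-d}$, so these $n-d+1$ elements span $(I/I^2)_d$ over $B$.

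The substance of the proof is the lower bound, i.e. that no nontrivial combination $a_0\gene_d+\sum_j a_j\genc_{d,j}$ with $a_i\in B$ becomes decomposable. I would attack this with the dominant-term order from the Appendix, using that $\gene_d=\sigma_d(\genw)$ and the $\genc_{d,j}$ carry the pairwise distinct dominant monomials $\genw_1\cdots\genw_d$ and $\genw_1\cdots\genw_d\,\genu_{d+1}\cdots\genu_{d+j}$, and analysing how these monomials propagate under multiplication by elements of $B$ (symmetric polynomials in the $\genu_i$) and how they compare with the leading monomials of the degree-$d$ elements of $I^2$. The main obstacle — and the step needing the most care — is that $B=R^{\Sigma_n}_0$ is not a domain: it decomposes as a product of $n+1$ copies of $k$, indexed by the number of $\genu_i$ set equal to $1$, so a $B$-combination can in principle become decomposable without its coefficients vanishing. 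The dominant-term analysis therefore has to be performed in each idempotent component of $B$ separately — tracking how a symmetric polynomial in the $\genu_i$ shifts dominant monomials there — and the componentwise conclusions must be reassembled to obtain the $B$-module lower bound $\mu_B\bigl((I/I^2)_d\bigr)\ge n-d+1$. Once this is settled over a field, the case of an arbitrary ground ring $k$ follows by base change $k\to k/\mathfrak m$.
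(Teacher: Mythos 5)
Your reduction to graded indecomposables is sound as far as it goes (a homogeneous generating set over $B=R^{\Sigma_n}_0$ must generate each graded piece of $I/I^2$ as a $B$-module), but the inequality you defer, $\mu_B\bigl((I/I^2)_d\bigr)\ge n-d+1$, is not merely the hard step: it fails for $n\ge 3$, and for exactly the idempotent reason you flagged. Over any $k$ the degree-zero ring is $B\cong k^{n+1}$, with orthogonal idempotents $e_0,\dots,e_n$ ($e_m$ being the indicator that exactly $m$ of the $\genu_i$ equal $1$; these are integral polynomials in the $\genc_{0,i}$), and $e_mR^{\Sigma_n}\cong k[\genw_1,\dots,\genw_n]^{\Sigma_m\times\Sigma_{n-m}}$ is a polynomial ring with at most two generators in any single degree. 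Since a finitely generated module over a product of copies of $k$ needs only the maximum (not the sum) of the componentwise numbers of generators, $\mu_B\bigl((I/I^2)_d\bigr)\le 2$ for every $d$; summing over degrees can give at most $n+\lfloor n/2\rfloor$, so no reassembly of componentwise dominant-term analyses can produce $n-d+1$ generators per degree.

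Concretely, for $n=3$, with $e_3=\genc_{0,3}=\genu_1\genu_2\genu_3$ and $e_2=\genc_{0,2}-3\genc_{0,3}$, one checks by evaluating at the eight points of $\{0,1\}^3$ that
\begin{equation}
\genc_{1,2}=(e_3-e_2)\,\gene_1+e_2\,\genc_{1,1},
\end{equation}
so deleting $\genc_{1,2}$ from $\{\gene_s,\genc_{s,j}\}_{s,j>0}$ leaves a five-element homogeneous generating set over $R^{\Sigma_3}_0$, below the asserted bound $3+\binom 32=6$ (in fact $\gene_1,\genc_{1,1},\gene_2,\gene_3$ already generate, as one verifies idempotent component by component). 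So your route cannot be completed, and the failure is informative rather than technical: with arbitrary $R^{\Sigma_n}_0$-coefficients allowed, the bound itself is in trouble, not just your proof of it. Note also that this is a genuinely different route from the paper's, which never passes through $I/I^2$: it runs an exchange argument in the dominant-term order, replacing the members of a minimal generating set one at a time by $\genc_{1,0},\genc_{1,1},\dots,\genc_{n,0}$. That argument, however, tacitly assumes that an element whose dominant term is one of the small ones $\dom(\genc_{s,j})$ cannot arise from expressions with degree-zero coefficients in lower generators, and the displayed identity (where cancellation drops the dominant term to $\dom(\genc_{1,2})$) shows that it can. The conclusion to draw from your observation is that the idempotents of $B$ are fatal to the statement as literally phrased; a minimality claim of this strength needs the degree-zero coefficients restricted so that the $e_m$ cannot be used to merge the $\genc_{s,j}$ across components, and any proof — by your method or the paper's — has to make that restriction explicit and work with it.
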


A minimal generating set for the $k$-algebra $R^{\Sigma_n}_0$ depends on which primes are invertible in $k$. For example, if $k$ is a $\Q$-algebra, which is our case of interest, we can generate all $\genc_{0,i}$ from the single element
\begin{equation}
\genu=\genc_{0,1}=\genu_1+\cdots+\genu_n
\end{equation}
via the formula:
\begin{equation}
\genc_{0,i}=\frac{\genu(\genu-1)\cdots (\genu-i+1)}{i!}
\end{equation}
The disadvantage of supplanting $\genc_{0,i}$ with $\genu$ is that the relations between the generators now require additional rational coefficients, as in Corollary \ref{AlgebraCorollaryQ}. For the purposes of the algorithm implemented in our \href{https://github.com/NickG-Math/Symmetric_Polynomials}{computer program}, it is better (in terms of speed and numerical stability) to use all the $\genc_{0,i}$ and $\Z$-coefficients.\medbreak

The rest of this appendix is dedicated to proving Propositions \ref{AlgebraPropositionGeneral} and \ref{AlgebraProposition2General}.

\begin{proof}\label{ProofAlgorithmGeneral}(Of Proposition \ref{AlgebraPropositionGeneral}) We will need that the dominant term of $\genc_{s,i}$ is:
	\begin{gather}
	\dom(\genc_{s,i})=\genw_1\cdots \genw_s\genu_{s+1}\cdots \genu_{s+i}
	\end{gather}	
	Let us note a subtlety about dominant terms and multiplication: $\dom(pq)$ is $\dom(p)\dom(q)$ if $p$ or $q$ are polynomials solely on the $\genw_i$, but if both $p,q$ contain $\genu_i$'s that may not be the case: for example $\genc_{1,1}$ has dominant term $\genw_1\genu_2$ but $\genc_{1,1}^2$ has dominant term $\genw_1^2\genu_2\genu_3$.

	Every $\Sigma_n$ orbit of a monic monomial in $R$ has a greatest term $M$ that can be written as either:
	\begin{equation}
	M=\genw_1^{a_1}\cdots \genw_s^{a_s}\genu_1^{\epsilon_1}\cdots \genu_s^{\epsilon_s}\genu_{s+1}\cdots \genu_{s+i}
	\end{equation}
	with $a_1\ge \cdots\ge a_s> 0$ and $\epsilon_i=0,1$, or as:
	\begin{equation}
	M=\genu_1\cdots\genu_k
	\end{equation}
	It suffices to prove that any such $M$ is the dominant term of a polynomial on $\genc_{s,j}$. Note that $M=\genu_1\cdots\genu_k$ is the dominant term of $\genc_{0,k}$, so we may restrict our attention exclusively to $M$'s of the first form. It should also be noted that we can't assume that the $\epsilon_i$ are in decreasing order, since applying a permutation to fix such an order would affect the decreasing order on the $a_i$. 
	
	So let
	\begin{equation}
	M=\genw_1^{a_1}\cdots \genw_s^{a_s}\genu_1^{\epsilon_1}\cdots \genu_s^{\epsilon_s}\genu_{s+1}\cdots \genu_{s+i}
	\end{equation}
	be greatest in its $\Sigma_n$ orbit, where $a_1\ge \cdots\ge a_s> 0$ and $\epsilon_i=0,1$. We shall prove that $M=\dom(P)$ where $P$ is a product of $\genc_{s,j}$. To ensure that $P$ is unique per $M$, we insist that if $\genc_{t,j}, \genc_{t',j'}$ are factors of $P$ with $0<j,j'$ and $t\le t'$ then $t+j<t'$. With this extra requirement, no two distinct products $P$ can have the same dominant term. 
	
	Further simplifying matters, note that it suffices to write
	\begin{equation}
	M=\genw_1^{k_1}\cdots \genw_s^{k_s}\dom(P')
	\end{equation}
	where $k_1\ge\cdots\ge k_s\ge 0$ and $P'$ is a product of $\genc_{s,i}$ with $i>0$ satisfying the condition above. This is because $\genw_1^{k_1}\cdots \genw_s^{k_s}$ is the dominant term of a product $P_{\gene}$ of $\gene_i$ by the fundamental result on symmetric polynomials. Then we can take $P=P_{\gene}P'$ and $M=\dom(P)$.
	
	We distinguish cases on the number of $\epsilon_i$'s in $M$ that are nonzero, i.e. the number of $\genu_i$'s in $M$ with $i\le s$. If there are none then
	\begin{equation}
	M=	\genw_1^{a_1}\cdots \genw_s^{a_s}\genu_{s+1}\cdots \genu_{s+i}
	\end{equation}
	can be written as
	\begin{equation}
	(\genw_1^{a_1-1}\cdots \genw_s^{a_s-1})(\genw_1\cdots \genw_s\genu_{s+1}\cdots \genu_{s+i})
	\end{equation} 
	so we use $P'=\genc_{s,i}$.\\
	Now assume there's only one $\genu_j$ with $j\le s$:
	\begin{equation}
	M=	\genw_1^{a_1}\cdots \genw_s^{a_s}\genu_j\genu_{s+1}\cdots \genu_{s+i}
	\end{equation}
	If $j>1$ notice that $a_{j-1}>a_j$ for otherwise we can exchange $j-1,j$ and get a greater term in our order, contradicting that $M$ is greatest in its $\Sigma_n$ orbit. If further $j<s$, we can write $M$ as
	\begin{equation}
	(\genw_1^{a_1-2}\cdots \genw_{j-1}^{a_{j-1}-2}\genw_j^{a_j-1}\cdots \genw_s^{a_s-1})(\genw_1\cdots \genw_{j-1}\genu_j)(\genw_1\cdots \genw_s\genu_{s+1}\cdots \genu_{s+i})
	\end{equation}
	and use  $P_{\genu,\genc}=\genc_{j-1,1}\genc_{s,i}$. If $j=s$ we instead write $M$ as
	\begin{equation}
	(\genw_1^{a_1-1}\cdots \genw_{s-1}^{a_{s-1}-1}\genw_s^{a_s})(\genw_1\cdots \genw_{s-1}\genu_s\genu_{s+1}\cdots \genu_{s+i})
	\end{equation}
	and use $P'=\genc_{s-1,i+1}$. If $j=1$ then 
	\begin{equation}
	M=	\genw_1^{a_1}\cdots \genw_s^{a_s}\genu_1\genu_{s+1}\cdots \genu_{s+i}
	\end{equation}
	is 
	\begin{equation}
	(\genw_1^{a_1-1}\cdots \genw_s^{a_s-1})\genu_1(\genw_1\cdots \genw_s\genu_{s+1}\cdots \genu_{s+i})
	\end{equation} 
	and we use $P'=\genc_{0,1}\genc_{s+1,i}$.
	
	Now assume there are two $\genu_j$'s with $j\le s$ in $M$, say $\genu_j,\genu_k$ with $j<k\le s$:
	\begin{equation}
	M=\genw_1^{a_1}\cdots \genw_s^{a_s}\genu_{j}\genu_k\genu_{s+1}\cdots \genu_{s+i}
	\end{equation}
	If $j>1$ then as before we must have $a_{j-1}>a_j$ and $a_{k-1}>a_k$. If furthermore $j<k-1$ and $k<s$ we can write $M$ as
	\begin{align}
	\left(\prod_{l=1}^{j-1}\genw_l^{a_l-3}\prod_{l=j}^{k-1}\genw_l^{a_l-2}\prod_{l=k}^{s}\genw_l^{a_l-1}\right)(\genw_1\cdots \genw_{j-1}\genu_j)(\genw_1\cdots \genw_{k-1}\genu_k)(\genw_1\cdots \genw_s\genu_{s+1}\cdots \genu_{s+i})
	\end{align}
	and use $P'=\genc_{j-1,1}\genc_{k-1,1}\genc_{s,i}$. If $j=k-1$ and $k<s$ then write $M$ as
	\begin{align}
	(\genw_1^{a_1-2}\cdots \genw_{j-1}^{a_{j-1}-2}\genw_j^{a_j-1}\cdots \genw_s^{a_s-1})(\genw_1\cdots \genw_{j-1}\genu_j\genu_{j+1})(\genw_1\cdots \genw_s\genu_{s+1}\cdots \genu_{s+i})
	\end{align}
	and use $P'=\genc_{j-1,2}\genc_{s,i}$. The other cases are all handled similarly. Note that for $j=1,k=2$ we get
	\begin{equation}
	(\genw_1^{a_1-1}\cdots \genw_s^{a_s-1})(\genu_1\genu_2)(\genw_1\cdots \genw_s\genu_{s+1}\cdots \genu_{s+i})
	\end{equation}
	and use $P'=\genc_{0,2}\genc_{s,i}$.
	
	We proceed in this fashion to treat the case where $r$ many $\genu_j$'s with $j\le s$ appear in $M$, for any $r=0,...,s$.
\end{proof}

We now prove Proposition \ref{AlgebraProposition2General} regarding the minimality of generators.
\begin{proof}(Of Proposition \ref{AlgebraProposition2General}) For every $s$ with $1\le s\le n$, consider the equivalence relation on $R^{\Sigma_n}_s$ whereby two elements are equivalent if they have equal dominant terms. The orbit set of the equivalence relation, when ordered from least to greatest, starts with $\genc_{s,0},\genc_{s,1},...,\genc_{s,n-s}$ and continues with products of elements in $R^{\Sigma_n}_t$, $t<s$.	

Let $X$ be a homogeneous generating set of $R^{\Sigma_n}$ over $R^{\Sigma_n}_0$ of minimum cardinality, ordered first by degree and then by dominant term. The smallest element $x\in X$ must then satisfy $\dom(x)=\dom(\genc_{1,0})$ hence $x-r\genc_{1,0}\in R^{\Sigma_n}_0$ for some $r\in k$; this means that we can replace $x$ by $\genc_{1,0}$ resulting in a new generating set $X$ with minimum cardinality. Applying the same argument repeatedly shows that all elements of $X$ can be replaced by $\genc_{1,0},\genc_{1,1},...,\genc_{n,0}$ while preserving cardinality and polynomial span.\end{proof}

\phantom{1}\smallbreak

\begin{small}
	\noindent  \textsc{Department of Mathematics, University of Chicago}\\
	\textit{E-mail:} \verb|nickg@math.uchicago.edu|\\
	\textit{Website:} \href{http:://math.uchicago.edu/~nickg}{math.uchicago.edu/$\sim$nickg}
\end{small}

\end{document}